\documentclass[a4paper,reqno,10pt]{amsart}
\raggedbottom
\hfuzz3pt
\makeatletter
\newcommand*{\rom}[1]{\expandafter\@slowromancap\romannumeral #1@}
\makeatother
\usepackage{epsf,graphicx,epsfig}
\usepackage{amscd}
\usepackage{amsmath,latexsym,amssymb,amsthm}
\usepackage[nospace,noadjust]{cite}
\usepackage{textcomp}
\usepackage{setspace,cite}
\usepackage{mathrsfs,amsmath}
\usepackage{chemarrow}
\usepackage{lscape,fancyhdr,fancybox}
\usepackage{stmaryrd}
\usepackage[all,cmtip]{xy}
\usepackage{tikz-cd}
\usepackage{cancel}
\usetikzlibrary{shapes,arrows,decorations.markings}
\setlength{\unitlength}{0.4in}

\usepackage{graphicx}

\usepackage{color}
\usepackage{bbm, dsfont}
\usepackage{xcolor}
\usepackage{url}
\usepackage{enumerate}
\usepackage[mathscr]{euscript}

\setlength{\topmargin}{-0.5in}
\setlength{\textheight}{9.8in}
\setlength{\oddsidemargin}{-0.1in}
\setlength{\evensidemargin}{-.1in}
\setlength{\textwidth}{6.4in}

  \theoremstyle{plain}
\swapnumbers
    \newtheorem{thm}{Theorem}[section]
    \newtheorem{proposition}[thm]{Proposition}

    \newtheorem{subsec}[thm]{}
\theoremstyle{definition}
    \newtheorem{definition}[thm]{Definition}
        \newtheorem{remark}[thm]{Remark}
    \newtheorem{exam}[thm]{Example}

\theoremstyle{remark}

\setcounter{tocdepth}{1}

\title{}
\author{}
\date{}
\usepackage{amssymb}

\usepackage{hyperref}
\hypersetup{
colorlinks,
citecolor=blue,
filecolor=black,
linkcolor=blue,
urlcolor=black
}

\begin{document}

\title[Higher structures for Lie $H$-pseudoalgebras]{Higher structures for Lie $H$-pseudoalgebras}

\author{Apurba Das}
\address{Department of Mathematics,
Indian Institute of Technology, Kharagpur 721302, West Bengal, India.}
\email{apurbadas348@gmail.com, apurbadas348@maths.iitkgp.ac.in}

\maketitle

\begin{abstract}
    Let $H$ be a cocommutative Hopf algebra. The notion of Lie $H$-pseudoalgebra is a multivariable generalization of Lie conformal algebras. In this paper, we study some higher structures related to Lie $H$-pseudoalgebras where we increase the flexibility of the Jacobi identity. Namely, we first introduce $L_\infty$ $H$-pseudoalgebras (also called strongly homotopy Lie $H$-pseudoalgebras) as the homotopy analogue of Lie $H$-pseudoalgebras. We give several equivalent descriptions of such homotopy algebras and show that some particular classes of these homotopy algebras are closely related to the cohomology of Lie $H$-pseudoalgebras and crossed modules of Lie $H$-pseudoalgebras. Next, we introduce another higher structure, called Lie-$2$ $H$-pseudoalgebras which are the categorification of Lie $H$-pseudoalgebras. Finally, we show that the category of Lie-$2$ $H$-pseudoalgebras is equivalent to the category of certain $L_\infty$ $H$-pseudoalgebras.
    

\end{abstract}

\medskip

\medskip

\medskip

{\em Mathematics Subject Classification (2020).} 17B56, 17B69, 18N40, 18N25.

{\em Keywords.} Lie $H$-pseudoalgebras, $L_\infty$ $H$-pseudoalgebras, Lie-$2$ $H$-pseudoalgebras.

\tableofcontents

\section{Introduction}

\subsection{Lie {\em H}-pseudoalgebras}
The concept of Lie conformal algebras was introduced by Kac \cite{kac, kac2} as a tool to study vertex operator algebras. Lie conformal algebras encode the axiomatic description of the operator product expansion of chiral fields in conformal field theory. Subsequently, the notion of Lie $H$-pseudoalgebra was introduced by Bakalov, D' Andrea and Kac \cite{bakalov-andrea-kac} as a multivariable generalization of Lie conformal algebras. While Lie conformal algebras are defined on ${\bf k}[\partial]$-modules, Lie $H$-pseudoalgebras are defined on left $H$-modules. Roughly, given a cocommutative Hopf algebra $H$, one first considers a pseudotensor category $\mathcal{M}^* (H)$. A Lie $H$-pseudoalgebra is simply a Lie algebra object in $\mathcal{M}^* (H)$. They have close connections to differential Lie algebras of Ritt \cite{ritt} and Hamiltonian formalism in the theory of nonlinear evolution equations \cite{gelfand-dorfman}. Given an ordinary Lie algebra $\mathfrak{g}$, one can construct a Lie $H$-pseudoalgebra structure on the tensor product $H \otimes \mathfrak{g}$. This is called the current Lie $H$-pseudoalgebra of $\mathfrak{g}$ and it is denoted by $\mathrm{Cur}(\mathfrak{g})$. On the other hand, each element in a Lie $H$-pseudoalgebra may be recast in terms of its Fourier coefficients, sometimes called creation and annihilation operators in the physical literature. The space of all such annihilation operators forms a Lie algebra (in general infinite-dimensional). Therefore, Lie $H$-pseudoalgebras are also useful to study infinite-dimensional Lie algebras. Structure theory, representations and cohomology of Lie $H$-pseudoalgebras are widely studied in \cite{bakalov-andrea-kac,bakalov-andrea-kac2,bakalov-andrea-kac3,andrea-marchei,das,wu} and in the references therein. See \cite{liberati,gole,retakh,wu-g,wu2} for some other types of pseudoalgebras and relevant results.


\subsection{Strongly homotopy algebras and categorifications}
In mathematics, when one allows to increase the flexibility of a structure, one obtains higher structures. This can be done often in two ways, namely, by homotopification and categorification of given algebraic identity/identities. Higher structures, such as strongly homotopy algebras and categorifications of algebras are key objects in higher differential geometry, higher gauge theory and infinity-category theory \cite{baez-crans,rogers,cheng,lada-s}. The notion of an $L_\infty$ algebra (also called a strongly homotopy Lie algebra) which is the homotopification of a Lie algebra, was introduced by Lada and Stasheff \cite{lada-s} and further studied by Lada and Markl \cite{lada-markl}. This homotopy algebra plays a significant role in many areas of mathematics and mathematical physics including deformation theory, quantization of Poisson manifolds and classical field theory \cite{kont,rogers,fre}. On the other hand, the notion of a Lie-$2$ algebra was introduced by Baez and Crans \cite{baez-crans} as the categorification of Lie algebras. Like Lie algebras are closely related to the set-theoretical solutions of the Yang-Baxter equation, Lie-$2$ algebras are related to the Zamolodchikov tetrahedron equation. Although strongly homotopy algebras and categorifications of algebras are two different generalizations of algebras, it has been observed in \cite{baez-crans} that these higher structures are closely connected. More precisely, they showed that the category of $2$-term $L_\infty$ algebras is equivalent to the category of Lie-$2$ algebras. Among other results, they considered and characterized skeletal and strict $L_\infty$ algebras.

\subsection{Graded Lie {\em H}-pseudoalgebras and strongly homotopy Lie {\em H}-pseudoalgebras} The concept of graded Lie algebras are the Lie objects in the category of graded vector spaces. In \cite{wu-g} the author has considered graded Lie $H$-pseudoalgebras in his study of generalizations of some results of Lie $H$-pseudoalgebras in the graded context. Subsequently, representations and cohomology of graded Lie $H$-pseudoalgebras are studied in \cite{sun}. The author also generalized some results of Lie $H$-pseudobialgebras and Manin triples of Lie $H$-pseudoalgebras in the graded context.

In this paper, we aim to study two higher structures in the context of Lie $H$-pseudoalgebras. We first introduce the notion of an $L_\infty$ $H$-pseudoalgebra (also called a strongly homotopy Lie $H$-pseudoalgebra or a sh Lie $H$-pseudoalgebra) which is the homotopy invariant extension of graded Lie $H$-pseudoalgebras. To define such a notion, we first consider a graded pseudotensor category $\mathcal{M}^*_{gr} (H)$ whose objects are graded left $H$-modules and (compositions of) polylinear maps are defined through the iterative use of the comultiplication map of $H$. With this, an $L_\infty$ $H$-pseudoalgebra is simply an $L_\infty$ algebra in the graded pseudotensor category $\mathcal{M}^*_{gr} (H)$. We give some equivalent descriptions of $L_\infty$ $H$-pseudoalgebras. Among others, given a graded left $H$-module $\mathcal{L}$, we construct a graded Lie algebra whose Maurer-Cartan elements correspond to $L_\infty$ $H$-pseudoalgebra structures on $\mathcal{L}$ (cf. Theorem \ref{mc-thm}). This construction generalizes the Maurer-Cartan characterization of Lie $H$-pseudoalgebras given in \cite{wu}. We also consider representations and cohomology of $L_\infty$ $H$-pseudoalgebras. Given an $L_\infty$ $H$pseudoalgebra $\mathcal{L}$, one can construct an $L_\infty$ algebra on the space $H^* \otimes_H \mathcal{L}$. This is called the annihilation algebra of $\mathcal{L}$. Next, let $\Gamma$ be a finite group acting on $H$ by Hopf algebra automorphisms. Let $\widetilde{H} = H \# {\bf k}[\Gamma]$ be the corresponding smash product. We observe that any $L_\infty$ $\widetilde{H}$-pseudoalgebra is equivalent to an $L_\infty$ $H$-pseudoalgebra equipped with an action of $\Gamma$ preserving the structure maps. 

We provide several examples of $L_\infty$ $H$-pseudoalgebras. First of all, we show that any $L_\infty$ conformal algebra introduced in \cite{sahoo-das} is an example of an $L_\infty$ $H$-pseudoalgebra. We also construct the current $L_\infty$ $H$-pseudoalgebra associated to an $L_\infty$ algebra. We define the notion of an $A_\infty$ $H$-pseudoalgebra and show that a suitable skew-symmetrization of an $A_\infty$ $H$-pseudoalgebra gives rise to an $L_\infty$ $H$-pseudoalgebra. 

Next, we focus on those $L_\infty$ $H$-pseudoalgebras whose underlying graded left $H$-module is concentrated in arity $0$ and $1$. We call them $2$-term $L_\infty$ $H$-pseudoalgebras. The collection of all $2$-term $L_\infty$ $H$-pseudoalgebras and morphisms between them forms a category, denoted by {\bf 2shLie}$H$. We also consider skeletal and strict $L_\infty$ $H$-pseudoalgebras and generalize some results of Baez and Crans \cite{baez-crans} in the present context. More precisely, we show that skeletal $L_\infty$ $H$-pseudoalgebras correspond to third cocycles of Lie $H$-pseudoalgebras and strict $L_\infty$ $H$-pseudoalgebras correspond to crossed modules of Lie $H$-pseudoalgebras.

\subsection{Categorification of Lie {\em H}-pseudoalgebras} In the final part of the paper, we consider the categorification of Lie $H$-pseudoalgebras. Namely, we introduce and study Lie-$2$ $H$-pseudoalgebras. When $H = {\bf k}$ is the base field, our notion coincides with the Lie-$2$ algebras considered by Baez and Crans. We show that the collection of all Lie-$2$ $H$-pseudoalgebras and morphisms between them forms a category, denoted by {\bf Lie2$H$}. Finally, we prove that the category {\bf Lie2$H$} is equivalent to the category {\bf 2shLie}$H$ (cf. Theorem \ref{last-thm}).

\medskip

The paper is organized as follows. In Section \ref{sec2}, we recall Lie $H$-pseudoalgebras and their cohomology theory. In Section \ref{sec3}, we introduce $L_\infty$ $H$-pseudoalgebras, their representations and cohomology. Various examples of $L_\infty$ $H$-pseudoalgebras are given in Section \ref{sec4}. We consider skeletal and strict $L_\infty$ $H$-pseudoalgebras in Section \ref{sec5} and provide their characterizations. Finally, the notion of Lie-$2$ $H$-pseudoalgebras is introduced in Section \ref{sec6} and proves Theorem \ref{last-thm}.


\section{Lie {\em H}-pseudoalgebras and their cohomology}\label{sec2} In this section, we recall some necessary background on Lie $H$-pseudoalgebras that are required for the smooth reading of the present paper. More precisely, we recall Lie $H$-pseudoalgebras, their representations and cohomology theory. We will mainly follow the references \cite{bakalov-andrea-kac}, \cite{andrea-marchei}, \cite{wu}.

First, recall that the notion of a pseudotensor category was introduced by Beilinson and Drinfeld \cite{bd} to study Lie algebras, representations, cohomology etc. from categorical points of view. Lie $H$-pseudoalgebras are Lie algebras in a suitable pseudotensor category associated with a cocommutative Hopf algebra $H$.

\begin{definition}
   A {\em pseudotensor category} $\mathcal{C}$ is a class of objects together with the following data:

   (i) For any finite non-empty set $I$, a family of objects $\{ L_i \}_{i \in I}$ labelled by $I$ and an object $M$, there is a vector space $\mathrm{Lin} ( \{ L_i \}_{i \in I}, M ) $ of polylinear maps equipped with an action of the symmetric group $S_I$ on $\mathrm{Lin} ( \{ L_i \}_{i \in I}, M )$ by permuting the factors,

   (ii) for any finite non-empty sets $I, J$ and a surjective map $\pi : J \twoheadrightarrow I$, families of objects $\{ L_i \}_{i \in I}$, $\{ N_j \}_{j \in J}$ and an object $M$, there exists a composition map
   \begin{align}\label{comp-maps}
       \mathrm{Lin} ( \{ L_i \}_{i \in I}, M ) \otimes \bigotimes_{i \in I}  \mathrm{Lin} ( \{ N_j \}_{j \in \pi^{-1} (i)}, L_i ) \rightarrow  \mathrm{Lin} ( \{ N_j \}_{j \in J}, M), ~ \varphi \otimes ( \bigotimes_{i \in I} \psi_i) \mapsto \varphi ( \{ \psi_i \}_{i \in I})
   \end{align}
   subject to satisfy the following axioms:

   - (associativity) for another finite non-empty set $K$ and a surjective map $\theta: K \twoheadrightarrow J$, families of objects $\{ P_k \}_{k \in K}$ and given $\zeta_j \in \mathrm{Lin} ( \{ P_k \}_{k \in \theta^{-1} (j)}, N_j)$, we have
   \begin{align*}
       \varphi \big( \{  \psi_i  (  \{ \zeta_j \}_{j \in J} )      \}_{i \in I}   \big) = \big(    \varphi ( \{ \psi_i \}_{i \in I})      \big) (   \{ \zeta_j \}_{j \in J}),
   \end{align*}

   - (unit) for any object $M$, there exists an element $1_M \in \mathrm{Lin} ( \{ M \}, M)$ such that 
   \begin{align}\label{unit}
   1_M ( \varphi) = \varphi  (  \{1_{L_i} \}_{i \in I} ) = \varphi, \text{ for any } \varphi \in \mathrm{Lin} ( \{ L_i \}_{i \in I}, M),
\end{align}

   - (equivariance) the composition maps (\ref{comp-maps}) are equivariant with respect to the actions of the symmetric groups.
\end{definition}

\begin{definition}
(i) Let $\mathcal{C}$ be a pseudotensor category. A {\em Lie algebra} in the pseudotensor category $\mathcal{C}$ is an object $L$ equipped with a polylinear map $\beta \in \mathrm{Lin} (\{ L, L \}, L)$ that is skew-symmetric in the sense that $\beta = - \sigma_{12} \beta$ (where $\sigma_{12} = (1~2) \in S_2$) and satisfies the Jacobi identity
\begin{align}\label{jac-iden}
   \beta (\beta (\cdot, \cdot), \cdot) = \beta (\cdot , \beta (\cdot , \cdot)) - \sigma_{12} \beta (\cdot, \beta (\cdot, \cdot)).
\end{align}
We denote a Lie algebra as above simply by the pair $(L, \beta)$.

(ii) Let $(L, \beta)$ be a Lie algebra in a pseudotensor category $\mathcal{C}$. A {\em representation} of the Lie algebra $(L, \beta)$ is a pair $(M, \gamma)$, where $M$ is an object of $\mathcal{C}$ and $\gamma \in \mathrm{Lin} (\{ L, M \}, M)$ is a polylinear map satisfying
\begin{align*}
   \gamma (\beta (\cdot, \cdot), \cdot) = \gamma (\cdot , \gamma (\cdot , \cdot)) - \sigma_{12} \gamma (\cdot, \gamma (\cdot, \cdot)).
\end{align*}
It follows that any Lie algebra is a representation of itself, called the adjoint representation.
\end{definition}

Cohomology of a Lie algebra in a pseudotensor category $\mathcal{C}$ with coefficients in a representation is defined in \cite{bakalov-andrea-kac}. However, we will not require this general cohomology theory. At the end of this section, we revise the cohomology of a Lie $H$-pseudoalgebra which is sufficient for the present paper.

\begin{remark}
It is important to remark that any pseudotensor category $\mathcal{C}$ is naturally an ordinary category with the hom sets $ \mathrm{Hom}_\mathcal{C} (L, M):= \mathrm{Lin} (\{ L \}, M),$ for any two objects $ L$ and $ M$.
\end{remark}

\begin{exam}
Let $\mathcal{C} = \mathsf{Vect}$ be the category of all vector spaces over the field $\mathbf{k}$. Given any collection $\{ L_i \}_{i \in I}$ of vector spaces labelled by a non-empty finite set $I$ and another vector space $M$, we set $\mathrm{Lin} ( \{ L_i \}_{i \in I}, M) := \mathrm{Hom} ( \otimes_{i \in I} L_i, M)$. The symmetric group $S_I$ acts on $\mathrm{Lin} ( \{ L_i \}_{i \in I}, M)$ by permuting the factors in $\otimes_{i \in I} L_i$. This is an example of a pseudotensor category in which the composition map (\ref{comp-maps}) is defined by $\varphi (\{ \psi_i \}_{i \in I}) = \varphi \circ (\otimes_{i \in I} \psi_i)$.
\end{exam}
\begin{exam}
Let $H$ be a cocommutative bialgebra and $\mathcal{M}^l (H)$ be the category of all left $H$-modules. Given any collection $\{ L_i \}_{i \in I}$ of left $H$-modules and a left $H$-module $M$, we set $\mathrm{Lin} (\{ L_i \}_{i \in I}, M) := \mathrm{Hom}_H ( \otimes_{i \in I} L_i, M)$ on which the symmetric group $S_I$ acts by permuting the factors in $\otimes_{i \in I} L_i$. This is also a pseudotensor category where the composition map is defined similarly to the previous example.
\end{exam}

The most important example of a pseudotensor category for Lie $H$-pseudoalgebras is given by the following. Let $H$ be a cocommutative Hopf algebra with the comultiplication map $\Delta: H \rightarrow H^{\otimes 2}$. Given any finite non-empty set $I$, we define the tensor product functor $\mathcal{M}^l (H)^{\otimes I} \rightarrow \mathcal{M}^l (H^{\otimes I})$ by the notation $\boxtimes_{i \in I}$. Thus, if $\{L_i \}_{i \in I}$ is a collection of left $H$-modules then $\boxtimes_{i \in I} L_i$ is nothing but $\otimes_{i \in I} L_i$ with the left $H^{\otimes I}$-action given by $(\otimes_{i \in I} h_i) ( \otimes_{i \in I} x_i) = \otimes_{i \in I} h_i x_i$, for $\otimes_{i \in I} h_i \in H^{\otimes I}$ and $\otimes_{i \in I} x_i \in \otimes_{i \in I} L_i$. We consider the pseudotensor category $\mathcal{M}^* (H)$ with the same objects as $\mathcal{M}^l (H)$ (i.e. left $H$-modules) but with different polylinear maps (hence different composition maps),
\begin{align*}
    \mathrm{Lin} (\{ L_i \}_{i \in I}, M) := \mathrm{Hom}_{H^{\otimes I}} ( \boxtimes_{i \in I} L_i, H^{\otimes I} \otimes_H M). 
\end{align*}
Here the symmetric group $S_I$ acts on the space $\mathrm{Lin} (\{ L_i \}_{i \in I}, M)$ by simultaneously permuting the factors in $\boxtimes_{i \in I} L_i$ and $H^{\otimes I}$. This is well-defined as $H$ is cocommutative. For any surjective map $\pi : J \twoheadrightarrow I$ between finite non-empty sets, consider the functor $\Delta^{(\pi)} : \mathcal{M}^l (H^{\otimes I}) \rightarrow \mathcal{M}^l (H^{\otimes J})$, $M \mapsto H^{\otimes J} \otimes_{H^{\otimes I}} M$, where $H^{\otimes I}$ acts on $H^{\otimes J}$ via the iterative comultiplication determined by $\pi$. With these, the composition of polylinear maps in the pseudotensor category $\mathcal{M}^* (H)$ is given by $\varphi (\{ \psi_i \}_{i \in I}):= \Delta^{(\pi)} (\phi) \circ ( \boxtimes_{i \in I} \psi_i ).$

\begin{definition}
    A {\em Lie $H$-pseudoalgebra} (or a {\em Lie pseudoalgebra} if $H$ is clear from the context) is a Lie algebra $(L, \beta)$ in the pseudotensor category $\mathcal{M}^* (H)$.
\end{definition}

It follows from the above definition that a Lie $H$-pseudoalgebra is a left $H$-module $L$ equipped with a map $\beta \in \mathrm{Hom}_{H^{\otimes 2}} (L \boxtimes L, H^{\otimes 2} \otimes_H L)$, also called a {\em pseudobracket}, that is skew-symmetric and satisfies the Jacobi identity. We often write $\beta (x \otimes y)$ as $[x * y ]$, for $x, y \in L$. Note that the $H^{\otimes 2}$-linearity of the pseudobracket $\beta$ simply means that $[fx * gy] = ((f \otimes g) \otimes_H 1) [x * y]$, for all $x, y \in L$ and $f, g \in H$. Explicitly,
\begin{align*}
    \text{ if } [x * y] = \sum_{i} (f_i \otimes g_i) \otimes_H e_i ~ \text{ then } ~ [fx * gy] = \sum_{i} (ff_i \otimes gg_i) \otimes_H e_i.
\end{align*}
Similarly, the pseudobracket is skew-symmetric implies that $[y * x ] = -(\sigma_{12} \otimes_H 1) [x * y]$, for all $x , y \in L$. Explicitly,
\begin{align*}
    \text{ if } [x * y] = \sum_{i} (f_i \otimes g_i) \otimes_H e_i ~ \text{ then } ~ [y * x] = - \sum_{i} (g_i \otimes f_i) \otimes_H e_i.
    \end{align*}
Finally, to describe explicitly the Jacobi identity (\ref{jac-iden}) for the pseudobracket, we need to compute all the compositions (in $\mathcal{M}^* (H)$) that appeared in the Jacobi identity. Let $x, y, z \in L$. If 
\begin{align*}
[x * y] = \sum_{i} (f_i \otimes g_i) \otimes_H e_i ~~ \text{ and say } [e_i * z] = \sum_{j} (f_{ij} \otimes g_{ij}) \otimes_H e_{ij},
\end{align*}
then $[[x * y] * z]$ is the element of $H^{\otimes 3} \otimes_H L$ given by
\begin{align}\label{xy-z}
    [[x * y] * z] = \sum_{i, j} \big( f_i f_{ij (1)} \otimes g_i f_{ij (2)} \otimes g_{ij}   \big) \otimes_H e_{ij}.
\end{align}
Similarly, if $[y * z] = \sum_i (h_i \otimes l_i) \otimes_H d_i$ and say $[x * d_i] = \sum_j (h_{ij} \otimes l_{ij}) \otimes_H d_{ij}$, then
\begin{align}\label{x-yz}
    [x * [y * z]] = \sum_{i, j} \big( h_{ij} \otimes h_i l_{ij (1)} \otimes l_i l_{ij (2)} \big) \otimes_H d_{ij} ~ \text{ as an element of } H^{\otimes 3} \otimes_H L.
\end{align}
Similarly, we can define the composition $[y * [x * z]]$. Note the Jacobi identity (\ref{jac-iden}) for the pseudobracket is simply $[[x * y] * z] = [x * [y * z]] - [y * [x * z]]$, where all the terms are explicitly given above. We often denote a Lie $H$-pseudoalgebra simply by $L$ if the pseudobracket $\beta = [\cdot * \cdot]$ is clear from the context.

Let $(L, [\cdot * \cdot])$ and $(L', [\cdot * \cdot]')$ be two Lie $H$-pseudoalgebras. A {\em morphism} of Lie $H$-pseudoalgebras from $L$ to $L'$ is a $H$-linear map $\theta : L \rightarrow L'$ that satisfies $(\mathrm{id}_{H^{\otimes 2}} \otimes_H \theta) [x * y] = [\theta (x) * \theta(y)]'$, for all $x, y \in L.$

\medskip

Let $L$ be a Lie $H$-pseudoalgebra. A {\em representation} of $L$ is a left $H$-module $M$ equipped with a map $\gamma \in \mathrm{Hom}_{H^{\otimes 2}} (L \boxtimes M, H^{\otimes 2} \otimes_H M)$, called the {\em action map} written as $\gamma (x \otimes u) = x * u$, that satisfies
\begin{align}\label{rep-iden}
    [x * y] * u = x * (y * u) - y * ( x * u), \text{ for all } x , y \in L \text{ and } u \in M.
\end{align}
Here each composition in (\ref{rep-iden}) can be explicitly defined similarly to the above discussions.

Let $L$ be a Lie $H$-pseudoalgebra and $M$ be a representation. For each $n \geq 0$, we define an abelian group $C^n_{\text{Lie-}H} (L, M)$ by
\begin{align*}
    C^n_{\text{Lie-}H} (L, M) = \begin{cases}
        \mathbf{k} \otimes_H M & \text{ if } n=0,\\
        \{ \theta \in  \mathrm{Hom}_{H^{\otimes n}} (L^{\boxtimes n}, H^{\otimes n} \otimes_H M)~|~ \theta \text{ is skew-symmetric, i.e.}\\
        \qquad \qquad \qquad \qquad  \qquad \qquad \quad \qquad  \sigma \theta = (-1)^{\mathrm{sgn} (\sigma)} \theta, \forall \sigma \in S_n \} & \text{ if } n \geq 1.
    \end{cases}
\end{align*}
Then there is a map $\delta : C^n_{\text{Lie-}H} (L, M) \rightarrow C^{n+1}_{\text{Lie-}H} (L, M)$ given by
\begin{align*}
   ( \delta (1 \otimes_H u)) (x) = \sum_{i} \varepsilon (g_i) f_i \otimes_H u_i, ~~ \text{ if } x * u = \sum_i (f_i \otimes g_i) \otimes_H u_i
\end{align*}
and
\begin{align*}
    (\delta \theta) (x_1, \ldots, x_{n+1} ) =~& \sum_{i=1}^{n+1} (-1)^{i+1} (\sigma_{1 \rightarrow i} \otimes_H 1) ~ x_i * \theta (x_1, \ldots, \widehat{x_i} , \ldots, x_{n+1})\\
    +& \sum_{1 \leq i < j \leq n+1} (-1)^{i+j} (\sigma_{\substack{1 \rightarrow i \\
    2 \rightarrow j}} \otimes_H 1) ~ \theta ([x_i * x_j], x_1, \ldots, \widehat{x_i}, \ldots, \widehat{x_j}, \ldots, x_{n+1}),
\end{align*}
for $1 \otimes_H u \in C^0_{\text{Lie-}H} (L, M)$, $\theta \in C^{n \geq 1}_{\text{Lie-}H} (L, M)$ and $x, x_1, \ldots, x_{n+1} \in L$. Here $\sigma_{1 \rightarrow i}$ and $\sigma_{\substack{1 \rightarrow i \\
    2 \rightarrow j}}$ are respectively the permutations on $H^{\otimes n+1}$ given by
    \begin{align*}
       \sigma_{1 \rightarrow i}( h_i \otimes h_1 \otimes \cdots \otimes h_{i-1} \otimes h_{i+1} \otimes \cdots \otimes h_{n+1}) = h_1 \otimes \cdots \otimes h_{n+1},\\
       \sigma_{\substack{1 \rightarrow i \\
    2 \rightarrow j}} ( h_i \otimes h_j \otimes h_1 \otimes \widehat{h_i} \otimes \cdots \otimes \widehat{h_j} \otimes \cdots \otimes h_{n+1} ) = h_1 \otimes \cdots \otimes h_{n+1}.
    \end{align*}
    It has been observed in \cite{bakalov-andrea-kac} that $\delta^2 = 0$. The cohomology groups of the cochain complex $\{ C^\bullet_{\text{Lie-}H} (L, M), \delta \}$ are called the cohomology of $L$ with coefficients in the representation $M$. We denote the corresponding cohomology groups by $H^\bullet_{\text{Lie-}H} (L, M).$

\section{Strongly homotopy Lie {\em H}-pseudoalgebras}\label{sec3}
In this section, we first consider a (graded) pseudotensor category $\mathcal{M}^*_{gr} (H)$ generalizing $\mathcal{M}^*(H)$ in the graded context. A graded Lie algebra in the pseudotensor category $\mathcal{M}^*_{gr} (H)$ is called a graded Lie $H$-pseudoalgebra. Motivated by this, we define the notion of an $L_\infty$ $H$-pseudoalgebra as an $L_\infty$ algebra in the pseudotensor category $\mathcal{M}^*_{gr} (H)$. Finally, we give some equivalent descriptions of $L_\infty$ $H$-pseudoalgebras.


Let $\mathsf{grVect}$ be the category of all $\mathbb{Z}$-graded vector spaces. For any collection $\{ \mathcal{L}_i \}_{i \in I}$ of graded vector spaces and another graded vector space $\mathcal{M}$, we set $\mathrm{Lin} ( \{ \mathcal{L}_i \}_{i \in I}, \mathcal{M}) = \mathrm{Hom} (   \otimes_{i \in I} \mathcal{L}_i, \mathcal{M})$ the set of all graded linear maps from the graded vector space $ \otimes_{i \in I} \mathcal{L}_i$ to the graded vector space $\mathcal{M}$. Note that $\mathrm{Lin} ( \{ \mathcal{L}_i \}_{i \in I}, \mathcal{M})$ is a graded vector space with the summands 
\begin{align*}
    \mathrm{Lin} ( \{ \mathcal{L}_i \}_{i \in I}, \mathcal{M}) = \oplus_{n \in \mathbb{Z}} \mathrm{Lin}^n ( \{ \mathcal{L}_i \}_{i \in I}, \mathcal{M}) = \oplus_{n \in \mathbb{Z}} \mathrm{Hom}^n (   \otimes_{i \in I} \mathcal{L}_i , \mathcal{M}),
\end{align*}
where $\mathrm{Hom}^n (   \otimes_{i \in I} \mathcal{L}_i , \mathcal{M})$ is the space of all degree $n$ graded linear maps. The symmetric group $S_I$ also acts on the graded space $ \mathrm{Lin} ( \{ \mathcal{L}_i \}_{i \in I}, \mathcal{M})$ by $(\sigma f) (\otimes_{i \in I} x_i) = \epsilon (\sigma) f (\otimes_{i \in I} x_{\sigma^{-1} (i)})$, where $\epsilon (\sigma) = \epsilon (\sigma; \otimes_{i \in I} x_i)$ is the standard {\em Koszul sign} that appears in the graded context. The composition map $\varphi (\{ \psi_i \}_{i \in I})$ is given by
\begin{align}
    \big(  \varphi (\{ \psi_i \}_{i \in I}) \big) (\otimes_{j \in J} x_j) = \big(  \varphi \circ (\otimes_{i \in I} \psi_i)  \big) (\otimes_{j \in J} x_j) = \epsilon (  \{ \psi_i \}_{i \in I} ; \otimes_{j \in J} x_j) \varphi \big(  \otimes_{i \in I} \psi_i (  \otimes_{j \in \pi^{-1} (i)} x_j  )  \big).
\end{align}
Here $\epsilon (  \{ \psi_i \}_{i \in I} ; \otimes_{j \in J} x_j)$ is also the Koszul sign. The above composition maps turn out to be associative and equivariant with respect to the actions of the symmetric groups. Further, for any graded vector space $\mathcal{L}$, we have the identity map $\mathrm{id}_\mathcal{L} \in \mathrm{Lin}^0 (\{ \mathcal{L} \}, \mathcal{L})$ that satisfies (\ref{unit}).

Motivated by the above discussions, we introduce the following.

\begin{definition}
    A {\em graded pseudotensor category} $\mathcal{C}_{gr}$ (over $\mathbb{Z}$) is a class of $\mathbb{Z}$-graded objects together with graded vector spaces 
    \begin{align*}
        \mathrm{Lin} (\{ \mathcal{L}_i \}_{i \in I}, \mathcal{M}) = \oplus_{n \in \mathbb{Z}} \mathrm{Lin}^n (\{ \mathcal{L}_i \}_{i \in I}, \mathcal{M}),
    \end{align*}
    equipped with actions of the symmetric groups $S_I$ among them by permuting the factors and there are composition maps $\varphi ( \{ \psi_i \}_{i \in I})$ subject to satisfy the associativity, unit condition and the equivariance condition.
\end{definition}

Of course the category $\mathsf{grVect}$ is a graded pseudotensor category. More generally, let $H$ be a cocommutative Hopf algebra and $\mathcal{M}^l_{gr} (H)$ be the category of all graded left $H$-modules. For any collection $\{ \mathcal{L}_i \}$ of graded left $H$-modules and another graded left $H$-module $\mathcal{M}$, we set 
\begin{align*}
    \mathrm{Lin} ( \{ \mathcal{L}_i \}_{i \in I}, \mathcal{M}) = \oplus_{n \in \mathbb{Z}} \mathrm{Lin}^n ( \{ \mathcal{L}_i \}_{i \in I}, \mathcal{M}) = \oplus_{n \in \mathbb{Z}} \mathrm{Hom}^n_H (\otimes_{i \in I} \mathcal{L}_i, \mathcal{M}),
\end{align*}
where $ \mathrm{Hom}^n_H (\otimes_{i \in I} \mathcal{L}_i, \mathcal{M})$ is the space of all graded $H$-linear maps of degree $n$ from the graded space $\otimes_{i \in I} \mathcal{L}_i$ to $\mathcal{M}$. The symmetric group action and the compositions can be defined as the motivated example $\mathsf{grVect}$. With these, $\mathcal{M}^l_{gr} (H)$ turns out to be a graded pseudotensor category.

Next, we consider a graded pseudotensor category $\mathcal{M}^*_{gr} (H)$ as follows. Its objects are the same as the objects of $\mathcal{M}^l_{gr} (H)$ (i.e. graded left $H$-modules) and polylinear maps are defined by
\begin{align*}
    \mathrm{Lin} (\{ \mathcal{L}_i \}_{i \in I}, \mathcal{M}) = \oplus_{n \in \mathbb{Z}} \mathrm{Lin}^n (\{ \mathcal{L}_i \}_{i \in I}, \mathcal{M}) := \oplus_{n \in \mathbb{Z}} \mathrm{Hom}^n_{H^{\otimes I}} ( \boxtimes_{i \in I} \mathcal{L}_i. H^{\otimes I} \otimes_H \mathcal{M}). 
\end{align*}
The symmetric group actions and composition maps are graded analogues of the same defined for $\mathcal{M}^* (H)$.

\begin{definition}
    Let $\mathcal{C}_{gr}$ be a graded pseudotensor category (over $\mathbb{Z}$).

    (i) A {\em graded Lie algebra} in $\mathcal{C}_{gr}$ is a pair $(\mathcal{L}, \beta)$ consisting of an object $\mathcal{L}$ with a degree $0$ polylinear map $\beta \in \mathrm{Lin}^0 (\{ \mathcal{L}, \mathcal{L} \}, \mathcal{L})$ that is {graded} skew-symmetric (i.e. $\beta (\cdot , \cdot) = - \epsilon (\sigma_{12}) (\sigma_{12} \otimes_H 1) \sigma_{12} \beta (\cdot , \cdot)$) and satisfies the graded Jacobi identity 
    \begin{align*}
        \beta (\beta (\cdot , \cdot), \cdot) = \beta ( \cdot , \beta (\cdot , \cdot)) - \epsilon (\sigma_{12}) \sigma_{12} \beta (\cdot , \beta (\cdot, \cdot)).
    \end{align*}

    (ii) A {\em differential graded Lie algebra} in $\mathcal{C}_{gr}$ is a graded Lie algebra $(\mathcal{L}, \beta)$ equipped with a degree $-1$ polylinear map $d \in \mathrm{Lin}^{-1} (\{ \mathcal{L} \}, \mathcal{L})$ satisfying
    \begin{align*}
        {d}^2 = 0 ~~~~ \text{ and } ~~~~ d \beta (\cdot, \cdot) = \beta (d \cdot, \cdot) + \epsilon (\beta; \cdot, \cdot) \beta (\cdot, d \cdot).
    \end{align*}
    \end{definition}

    \begin{definition}
        A {\em graded Lie $H$-pseudoalgebra} is a graded Lie algebra in the graded pseudotensor category $\mathcal{M}^*_{gr} (\mathcal{H})$. Similarly, a {\em differential graded Lie $H$-pseudoalgebra} is a differential graded Lie algebra in the graded pseudotensor category $\mathcal{M}^*_{gr} (\mathcal{H})$.
    \end{definition}

    Thus, a graded Lie $H$-pseudoalgebra is a pair $(\mathcal{L}, \beta)$ consisting of a graded left $H$-module $\mathcal{L} = \oplus_{n \in \mathbb{Z}} \mathcal{L}^n$ with a degree $0$ graded pseudobracket $\beta \in \mathrm{Hom}^0_{H^{\otimes 2}} (\mathcal{L} \boxtimes \mathcal{L}, H^{\otimes 2} \otimes_H \mathcal{L})$, written as $\beta (x \otimes y) = [x * y]$, satisfying 

    - (graded skew-symmetry) $[y * x ] = -(-1)^{|x| |y|} (\sigma_{12} \otimes_H 1) [x * y]$,

    - (graded Jacobi identity) for any homogeneous elements $x, y, z \in \mathcal{L}$,
    \begin{align*}
        [[x * y] * z] = [x * [y * z]] - (-1)^{|x| |y|} [y * [x * z]].
    \end{align*}
Further, it is a differential graded Lie $H$-pseudoalgebra if there exists a $H$-linear degree $-1$ map $d : \mathcal{L} \rightarrow H \otimes_H \mathcal{L}$ satisfies $d^2 = 0$ and $d [x * y] = [dx * y] + (-1)^{|x|} [x * dy]$, for all $x, y \in \mathcal{L}$.

    In the following, we will introduce $L_\infty$ $H$-pseudoalgebras. Before we give the explicit description, we motivate the reader by giving the following.

    \begin{definition}
        Let $\mathcal{C}_{gr}$ be a graded pseudotensor category (over $\mathbb{Z}$). An {\em $L_\infty$ algebra} in $\mathcal{C}_{gr}$ is a pair $(\mathcal{L}, \{ \beta_k \}_{k \geq 1})$ consisting of an object $\mathcal{L} = \oplus_{n \in \mathbb{Z}} \mathcal{L}^n$ together with a collection
        \begin{align*}
    \big\{ \beta_k \in \mathrm{Lin}^{k-2} ( \{ \underbrace{\mathcal{L}, \ldots, \mathcal{L}}_{k  \mathrm{ ~many}} \}, \mathcal{L} ) \big\}_{k \geq 1}
    \end{align*}
    of polylinear maps with specific degrees (namely, $\mathrm{deg} (\beta_k) = k-2$, for $k \geq 1$) that satisfy the following axioms:

    - ((graded) skew-symmetry) each $\beta_k$ is skew-symmetric in the sense that $\beta_k = (-1)^\sigma (\sigma \beta_k)$, for all $\sigma \in S_k$,

    - (higher Jacobi identities) for each $N \in \mathbb{N}$ and homogeneous elements $x_1, \ldots, x_N \in \mathcal{L}$, the following identity holds:
    \begin{align}\label{higher-j}
        \sum_{k+l = N+1} \sum_{\sigma \in \mathrm{Sh} (l, k-1)} (-1)^\sigma \epsilon (\sigma) (-1)^{l (k-1)} \beta_k \big( \beta_l (x_{\sigma (1)}, \ldots, x_{\sigma (l)}), x_{\sigma (l+1)}, \ldots, x_{\sigma (N)} \big) = 0.
    \end{align}
    \end{definition}

    Let $\mathcal{C}_{gr} = \mathsf{grVect}$ be the graded pseudotensor category of all graded vector spaces. It follows from the above definition that an $L_\infty$ algebra in $\mathsf{grVect}$ is a pair $(\mathcal{L}, \{ \beta_k \}_{k \geq 1})$ consisting of a graded vector space $\mathcal{L} = \oplus_{n \in \mathbb{Z}} \mathcal{L}^n$ with a collection $\{ \beta_k \in \mathrm{Hom}^{k-2} (\mathcal{L}^{\otimes k}, \mathcal{L}) \}_{k \geq 1}$ of graded skew-symmetric maps satisfying the higher Jacobi identities (\ref{higher-j}). This is a classical $L_\infty$ algebra considered in \cite{lada-s,lada-markl}. Thus, an $L_\infty$ algebra in a graded pseudotensor category generalizes classical $L_\infty$ algebras. Motivated by this observation, we now define the following.

\begin{definition}\label{shlieH}
    An {\em $L_\infty$ $H$-pseudoalgebra} (or simply a {\em strongly homotopy Lie $H$-pseudoalgebra}) is an $L_\infty$ algebra in the graded pseudotensor category $\mathcal{M}^*_{gr} (H)$. More precisely, an $L_\infty$ $H$-pseudoalgebra is a pair $(\mathcal{L}, \{ \beta_k \}_{k \geq 1})$ consisting of a graded left $H$-module $\mathcal{L} = \oplus_{n \in \mathbb{Z}} \mathcal{L}^n$ equipped with a collection $\big\{ \beta_k \in \mathrm{Hom}_{H^{\otimes k}} (  \mathcal{L}^{\boxtimes k}, H^{\otimes k} \otimes_H \mathcal{L}  )  \big\}_{k \geq 1}$ of polylinear maps with $\mathrm{deg} (\beta_k) = k-2$, for $k \geq 1$, subject to the following conditions:

    - (graded skew-symmetry) each $\beta_k$ is graded skew-symmetric in the sense that
    \begin{align*}
        \beta_k (x_1, \ldots, x_k) = (-1)^\sigma \epsilon (\sigma) (\sigma \otimes_H 1) \beta_k (  x_{\sigma^{-1} (1)}, \ldots, x_{\sigma^{-1} (k)}), \text{ for all } \sigma \in S_k,
    \end{align*}

    - (higher Jacobi identities) For each $N \in \mathbb{N}$ and homogeneous elements $x_1, \ldots, x_N \in \mathcal{L}$, the identity (\ref{higher-j}) holds.

\end{definition}

\begin{remark}
    (i) Any graded Lie $H$-pseudoalgebra $(\mathcal{L}, \beta)$ can be realized as an $L_\infty$ $H$-pseudoalgebra $(\mathcal{L}, \{ \beta_k \}_{k \geq 1})$ with $\beta_2 = \beta$ and $\beta_k = 0$ for all $k \neq 2$. Similarly, a differential graded Lie $H$-pseudoalgebra $(\mathcal{L}, \beta, d) $ is an $L_\infty$ $H$-pseudoalgebra with $\beta_1 = d$, $\beta_2 = \beta$ and $\beta_k = 0$ for all $k \neq 1, 2$.

    (ii) As expected, any classical $L_\infty$ algebra $(\mathcal{L}, \{ \beta_k \}_{k \geq 1})$ is an $L_\infty$ $H$-pseudoalgebra for $H = \mathbf{k}$.
\end{remark}

There is another equivalent description of an $L_\infty$ $H$-pseudoalgebra that renders some other simple descriptions. Let $(\mathcal{L}, \{ \beta_k \}_{k \geq 1})$ be an $L_\infty$ $H$-pseudoalgebra. Consider the graded left $H$-module $\mathcal{L}[-1]$ given by $\mathcal{L}[-1] = \oplus_{n \in \mathbb{Z}} (\mathcal{L}[-1])^n = \oplus_{n \in \mathbb{Z}} \mathcal{L}^{n-1}$. For any $k \geq 1$, we define
\begin{align}\label{eta}
     \eta_k \in \mathrm{Hom}_{H^{\otimes k}} \big( (\mathcal{L}[-1])^{\boxtimes k}, H^{\otimes k} \otimes_H \mathcal{L}[-1]   \big) ~~\text{ by } ~~ \eta_k = (-1)^{\frac{k (k-1)}{2}} s \circ \beta_k \circ (s^{-1})^{\otimes k},
\end{align}
where $s: \mathcal{L} \rightarrow \mathcal{L} [-1]$ is the degree $+1$ map that identifies $\mathcal{L}$ and $\mathcal{L}[-1]$, and $s^{-1} : \mathcal{L} [-1] \rightarrow \mathcal{L}$ is the inverse of $s$.  Since $\mathrm{deg}(\beta_k) = k-2$, it follows that $\mathrm{deg} (\eta_k )= -1$.  The graded skew-symmetry of $\beta_k$ is equivalent to the fact that the map $\eta_k$ is graded symmetric. Furthermore, the higher Jacobi identities (\ref{higher-j}) are equivalent to the identities
\begin{align}\label{shifted-j}
    \sum_{k+l = N+1} \sum_{\sigma \in \mathrm{Sh} (l, k-1)} \epsilon (\sigma) \eta_k \big( \eta_l ( u_{\sigma (1)}, \ldots, u_{\sigma (l)}), u_{\sigma (l+1)}, \ldots, u_{\sigma (N)} \big) = 0,
\end{align}
for all $N \in \mathbb{N}$ and homogeneous elements $u_1, \ldots, u_N \in \mathcal{L}[-1]$. As a summary, we obtain the following.

\begin{proposition}\label{prop-shifted}
    Let $\mathcal{L} = \oplus_{n \in \mathbb{Z}} \mathcal{L}^n$ be a graded left $H$-module. An $L_\infty$ $H$-pseudoalgebra structure on $\mathcal{L}$ is equivalent to a collection of degree $-1$ polylinear maps $\{  \eta_k \in \mathrm{Hom}^{-1}_{H^{\otimes k}} (   (\mathcal{L} [-1])^{\otimes k}, H^{\otimes k} \otimes_H \mathcal{L}[-1]) \}_{k \geq 1}$ that are graded symmetric and satisfying the shifted higher Jacobi identities (\ref{shifted-j}).
\end{proposition}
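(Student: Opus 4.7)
The plan is to realize the correspondence $\beta_k \leftrightarrow \eta_k$ as a suspension bijection of the classical \emph{décalage} type, adapted to the pseudotensor setting. The assignment (\ref{eta}) is invertible, with inverse $\beta_k = (-1)^{k(k-1)/2} (\mathrm{id}_{H^{\otimes k}} \otimes_H s^{-1}) \circ \eta_k \circ s^{\otimes k}$, so it suffices to show that under this bijection the two sets of axioms correspond. A direct degree count gives $\mathrm{deg}(\eta_k) = 1 + (k-2) - k = -1$, confirming the proposed degree, and the $H^{\otimes k}$-linearity of $\eta_k$ follows from that of $\beta_k$ together with the $H$-linearity of $s$.

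Next I would verify that the graded skew-symmetry of $\beta_k$ is equivalent to the graded symmetry of $\eta_k$. Writing $u_i = s x_i$ for homogeneous $x_i \in \mathcal{L}$, the Koszul sign for permuting the $u_i$'s (which have degree $|x_i|+1$) differs from the Koszul sign $\epsilon(\sigma)$ for permuting the $x_i$'s by exactly the factor $(-1)^\sigma$. The prefactor $(-1)^{k(k-1)/2}$ in (\ref{eta}) is symmetric under all of $S_k$ and therefore contributes nothing to the permutation axiom. Consequently, the factor $(-1)^\sigma$ in the graded skew-symmetry of $\beta_k$ is exactly absorbed by this sign shift, yielding the graded symmetry of $\eta_k$ with sign $\epsilon(\sigma)$ alone; the $H$-permutation $\sigma \otimes_H 1$ passes through $s$ and $s^{-1}$ since $s$ is $H$-linear.

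The main content is the equivalence between the higher Jacobi identities (\ref{higher-j}) and their shifted counterparts (\ref{shifted-j}). Substituting the expressions of $\beta_k$ and $\beta_l$ in terms of $\eta_k, \eta_l$ into a generic term of (\ref{higher-j}) accumulates Koszul signs from three sources: the prefactors $(-1)^{k(k-1)/2} \cdot (-1)^{l(l-1)/2}$ from the two definitions; the signs produced by sliding an $s^{-1}$ (of degree $-1$) past the $k-1$ remaining entries of $\beta_k$ and past the $\eta_l$-value; and the change from the Koszul convention $\epsilon(\sigma; x_1, \ldots, x_N)$ on $\mathcal{L}$ to $\epsilon(\sigma; u_1, \ldots, u_N)$ on $\mathcal{L}[-1]$, which differ by $(-1)^\sigma$. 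The main obstacle, and the only delicate point, is verifying that these signs together with the explicit $(-1)^{l(k-1)}$ and $(-1)^\sigma$ in (\ref{higher-j}) combine to leave precisely the sign $\epsilon(\sigma; u_1, \ldots, u_N)$ that appears in (\ref{shifted-j}), with no residual factor. This is the standard décalage calculation; the pseudotensor setting does not interfere, because the iterated comultiplications $\Delta^{(\pi)}$ defining composition in $\mathcal{M}^*_{gr}(H)$ act only on the $H^{\otimes \bullet}$ factors and commute with the $H$-linear suspension $s$. Running the argument in both directions completes the equivalence.
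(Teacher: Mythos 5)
Your proposal is correct and follows essentially the same route as the paper: the paper also establishes the proposition via the suspension correspondence (\ref{eta}), asserting (without detailed sign computations) that the degree shift, the equivalence of graded skew-symmetry with graded symmetry, and the equivalence of (\ref{higher-j}) with (\ref{shifted-j}) follow from the standard d\'ecalage bookkeeping. Your write-up in fact outlines the sign sources more explicitly than the paper does, so there is nothing to add.
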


Next, we will consider another characterization of $L_\infty$ $H$-pseudoalgebras in terms of Maurer-Cartan elements in a suitable graded Lie algebra. Let $\mathcal{W} = \oplus_{n \in \mathbb{Z}} \mathcal{W}^n$ be a graded left $H$-module. For any $p \in \mathbb{Z}$ and $k \geq 1$, let $\mathrm{Hom}^p_{H^{\otimes k}} (\mathcal{W}^{\boxtimes k}, H^{\otimes k} \otimes_H \mathcal{W})$ be the set of all graded polylinear maps in $\mathrm{Hom}_{H^{\otimes k}} (\mathcal{W}^{\boxtimes k}, H^{\otimes k} \otimes_H \mathcal{W})$ of degree $p$.  We also let $\mathrm{symHom}^p_{H^{\otimes k}} (\mathcal{W}^{\boxtimes k}, H^{\otimes k} \otimes_H \mathcal{W})$ be the set of all maps in  $\mathrm{Hom}^p_{H^{\otimes k}} (\mathcal{W}^{\boxtimes k}, H^{\otimes k} \otimes_H \mathcal{W})$ that are graded symmetric. We define
\begin{align*}
    \mathrm{Hom}^p_{\mathrm{poly}} (\mathcal{W}, \mathcal{W}) :=~& \oplus_{k \geq 1} \mathrm{Hom}^p_{H^{\otimes k}} (\mathcal{W}^{\boxtimes k}, H^{\otimes k} \otimes_H \mathcal{W}),\\
     \mathrm{symHom}^p_{\mathrm{poly}} (\mathcal{W}, \mathcal{W}) :=~& \oplus_{k \geq 1} \mathrm{symHom}^p_{H^{\otimes k}} (\mathcal{W}^{\boxtimes k}, H^{\otimes k} \otimes_H \mathcal{W}).
\end{align*}
Thus, an element $\eta \in \mathrm{Hom}^p_\mathrm{poly} (\mathcal{W}, \mathcal{W})$ (resp. $\mathrm{Hom}^p_\mathrm{poly} (\mathcal{W}, \mathcal{W})$) is a formal sum $\eta = \sum_{k \geq 1} \eta_k$, where $\eta_k \in \mathrm{Hom}^p_{H^{\otimes k}} ( \mathcal{W}^{\boxtimes k}, H^{\otimes k} \otimes_H \mathcal{W}) $  (resp. $\mathrm{symHom}^p_{H^{\otimes k}} ( \mathcal{W}^{\boxtimes k}, H^{\otimes k} \otimes_H \mathcal{W})$) for $k \geq 1$. It has been implicitly observed by Wu \cite{wu} that the graded space $\oplus_{p \in \mathbb{Z}} \mathrm{symHom}^p_\mathrm{poly} (\mathcal{W}, \mathcal{W})$ carries a graded Lie bracket given by
\begin{align}\label{gl-bracket}
    \llbracket \sum_{k \geq 1} \eta_k , \sum_{l \geq 1} \zeta_l   \rrbracket = \sum_{N \geq 1} \sum_{k+l = N+1} \llbracket \eta_k, \zeta_l \rrbracket := \sum_{N \geq 1} \sum_{k+l = N+1} \big(  \eta_k \diamond \zeta_l - (-1)^{pq} \zeta_l \diamond \eta_k  \big), 
\end{align}
where
\begin{align}\label{compo}
    (\eta_k \diamond \zeta_l) (w_1, \ldots, w_N) := \sum_{\sigma \in \mathrm{Sh} (l, k-1)} \epsilon (\sigma) \eta_k \big( \zeta_l ( w_{\sigma (1)}, \ldots, w_{\sigma (l)}), w_{\sigma (l+1)}, \ldots, w_{\sigma (N)} \big),
\end{align}
for $\eta = \sum_{k \geq 1} \eta_k \in \mathrm{symHom}^p_\mathrm{poly} (\mathcal{W}, \mathcal{W})$,  $\zeta = \sum_{l \geq 1} \zeta_l \in \mathrm{symHom}^q_\mathrm{poly} (\mathcal{W}, \mathcal{W})$ and $w_1, \ldots, w_N \in \mathcal{W}$. 

\begin{thm}\label{mc-thm}
    Let $\mathcal{L} = \oplus_{n \in \mathbb{Z}} \mathcal{L}^n$ be a graded left $H$-module. Then there is a one-to-one correspondence between $L_\infty$ $H$-pseudoalgebra structures on $\mathcal{L}$ and Maurer-Cartan elements in the graded Lie algebra $\big( \oplus_{p \geq 1} \mathrm{symHom}^p_{\mathrm{poly}} (\mathcal{W}, \mathcal{W}), \llbracket ~, ~ \rrbracket   \big)$, where $\mathcal{W} = \mathcal{L}[-1]$.
\end{thm}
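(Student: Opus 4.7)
The strategy is to translate the problem into the shifted picture supplied by Proposition \ref{prop-shifted} and then unpack the Maurer--Cartan equation in the graded Lie algebra $(\mathrm{symHom}_{\mathrm{poly}}(\mathcal{W}, \mathcal{W}), \llbracket \cdot , \cdot \rrbracket)$ to recover the shifted higher Jacobi identities (\ref{shifted-j}).

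First, I would invoke Proposition \ref{prop-shifted} to identify an $L_\infty$ $H$-pseudoalgebra structure on $\mathcal{L}$ with a family of degree $-1$ graded symmetric polylinear maps $\eta_k \in \mathrm{symHom}^{-1}_{H^{\otimes k}}(\mathcal{W}^{\boxtimes k}, H^{\otimes k} \otimes_H \mathcal{W})$, $k \geq 1$, where $\mathcal{W} = \mathcal{L}[-1]$, subject to the shifted higher Jacobi identities. Assembling them into a formal sum $\eta := \sum_{k \geq 1} \eta_k$ packages the data as a single degree $-1$ element of $\mathrm{symHom}_{\mathrm{poly}}(\mathcal{W}, \mathcal{W})$, and this packaging is evidently a bijection at the level of underlying polylinear maps.

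Next, I would expand $\llbracket \eta, \eta \rrbracket$ using the bracket (\ref{gl-bracket}). Since $|\eta| = -1$, the sign $(-1)^{pq}$ equals $-1$, so $\eta_k \diamond \eta_l$ and $\eta_l \diamond \eta_k$ add rather than cancel; after reindexing the double sum one obtains
\begin{align*}
    \llbracket \eta, \eta \rrbracket \;=\; 2 \sum_{N \geq 1} \sum_{k + l = N+1} \eta_k \diamond \eta_l.
\end{align*}
Comparing (\ref{compo}) with (\ref{shifted-j}) term by term, the arity-$N$ component of the right-hand side is exactly the left-hand side of the shifted higher Jacobi identity for $N$. Consequently $\llbracket \eta, \eta \rrbracket = 0$ if and only if all the shifted higher Jacobi identities (\ref{shifted-j}) hold. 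Combined with the first step, this produces the claimed bijection between $L_\infty$ $H$-pseudoalgebra structures and Maurer--Cartan elements.

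The hard part, I expect, is the sign bookkeeping and the prior verification that the bracket (\ref{gl-bracket}) genuinely defines a graded Lie bracket in the $H$-pseudotensor context: the shuffle permutations (with their Koszul signs $\epsilon(\sigma)$) must interact correctly with the simultaneous permutation of the $H^{\otimes k}$-factor, and one should check graded antisymmetry $\llbracket \eta, \zeta \rrbracket = -(-1)^{pq} \llbracket \zeta, \eta \rrbracket$ and the graded Jacobi identity on an ad-hoc basis (the paper attributes this fact to Wu, but I would spell out at least the antisymmetry carefully since it is what allows the $\tfrac{1}{2} \llbracket \eta, \eta \rrbracket$ interpretation). Once this structural fact is in hand, the remainder of the argument is the direct expansion carried out above.
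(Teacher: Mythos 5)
Your proposal is correct and follows essentially the same route as the paper's proof: pass to the shifted symmetric maps $\eta_k$ via Proposition \ref{prop-shifted}, assemble $\eta = \sum_{k\geq 1}\eta_k$, and observe that since $\eta$ has degree $-1$ the bracket gives $\llbracket \eta,\eta\rrbracket = 2\sum_{N\geq 1}\sum_{k+l=N+1}\eta_k \diamond \eta_l$, whose vanishing is exactly the shifted higher Jacobi identities (\ref{shifted-j}). Your added remark about verifying that (\ref{gl-bracket}) is genuinely a graded Lie bracket is a reasonable precaution, but the paper, like you, defers this to Wu's work rather than reproving it.
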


\begin{proof}
    Let $\{  \beta_k \in \mathrm{Hom}_{H^{\otimes k}}^{k-2} (\mathcal{L}^{\boxtimes k}, H^{\otimes k} \otimes_H \mathcal{L})    \}_{k \geq 1}$ be a collection of graded skew-symmetric polylinear maps of specific degrees. Then we have seen in (\ref{eta}) that the above collection is equivalent to having a collection $\{  \eta_k \in \mathrm{Hom}_{H^{\otimes k}}^{-1} (\mathcal{W}^{\boxtimes k}, H^{\otimes k} \otimes_H \mathcal{W}) \}_{k \geq 1}$ of graded symmetric polylinear maps. If we consider the element $\eta= \sum_{k \geq 1} \eta_k \in \mathrm{symHom}_\mathrm{poly}^{-1} (\mathcal{W}, \mathcal{W})$, then we have from (\ref{gl-bracket}) that
    \begin{align*}
        \llbracket \eta, \eta \rrbracket = \sum_{N \geq 1} \sum_{k+l = N+1} (\eta_k \diamond \eta_l + \eta_l \diamond \eta_k) = 2 \sum_{N \geq 1} \sum_{k+l = N+1} (\eta_k \diamond \eta_l),
    \end{align*}
    where $(\eta_k \diamond \eta_l)$ is given in (\ref{compo}). This show that $\eta = \sum_{k \geq 1} \eta_k$ is a Maurer-Cartan element in the graded Lie algebra $(\oplus_{p \geq 1} \mathrm{symHom}^p_{\mathrm{poly}} (\mathcal{W}, \mathcal{W}), \llbracket ~, ~ \rrbracket )$ if and only if the collection $\{ \eta_k \}_{k \geq 1}$ satisfy the identities (\ref{shifted-j}). Hence the result follows after using Proposition \ref{prop-shifted}.
\end{proof}

\begin{remark}
    The above theorem generalizes the Maurer-Cartan characterization of Lie $H$-pseudoalgebras considered by Wu \cite{wu}.
\end{remark}

As a consequence of all the above discussions, we get the following equivalent descriptions of an $L_\infty$ $H$-pseudoalgebra:

(i) $(\mathcal{L} , \{ \beta_k \}_{k \geq 1})$ is an $L_\infty$ $H$-pseudoalgebra,

(ii) there exists a collection of degree $-1$ polylinear maps $\{ \eta_k \in \mathrm{Hom}_{H^{\otimes k}}  (  (\mathcal{L}[-1])^{\otimes k}, H^{\otimes k} \otimes_H \mathcal{L}[-1]) \}_{k \geq 1}$ that are graded symmetric and satisfy the shifted higher Jacobi identities (\ref{shifted-j}),

(iii) there exists a degree $-1$ square-zero coderivation $\eta$ on the coassociative coalgebra $(T^c_H (\mathcal{L}[-1]), \Delta),$

(iv) there exists a Maurer-Cartan element in the graded Lie algebra $\big( \oplus_{p \geq 1} \mathrm{Hom}^p_{\mathrm{poly}} (\mathcal{L}[-1], \mathcal{L}[-1]), \llbracket ~, ~ \rrbracket   \big)$.

\medskip

\subsection{Representations and cohomology of strongly homotopy Lie $H$-pseudoalgebras}
In this subsection, we first consider representations of an $L_\infty$ $H$-pseudoalgebra. Then using the Maurer-Cartan characterization of $L_\infty$ $H$-pseudoalgebras, we define their cohomology (with coefficients in representations).

Let $(\mathcal{L} = \oplus_{n \in \mathbb{Z} } \mathcal{L}^n , \{ \beta_k \}_{k \geq 1} )$ be an $L_\infty$ $H$-pseudoalgebra. A {\em representation} of the $L_\infty$ $H$-pseudoalgebra is given by a pair $( \mathcal{M} = \oplus_{n \in \mathbb{Z} } \mathcal{M}^n, \{ \gamma_k \}_{k \geq 1} )$ consisting of a graded left $H$-module $\mathcal{M} = \oplus_{n \in \mathbb{Z}} \mathcal{M}^n$ with a collection of polylinear maps $\big\{  \gamma_k \in \mathrm{Hom}_{H^{\otimes k}} (\mathcal{L}^{\boxtimes k-1} \boxtimes \mathcal{M}, H^{\otimes k} \otimes_H \mathcal{M}) \big\}_{k \geq 1}$ with $\mathrm{deg} (\gamma_k) = k-2$ for $k \geq 1$, that subject to satisfy the following conditions: 

(i) each $\gamma_k$ is skew-symmetric on simultaneous permutations of the factors of $\mathcal{L}^{\boxtimes k-1}$ and first $k-1$ factors of $H^{\otimes k}$,

(ii) for each $N \in \mathbb{N}$, the identity (\ref{higher-j}) holds when $x_1, \ldots, x_{N-1} \in \mathcal{L}$ and $x_N \in \mathcal{M}$ (with the convension that $\beta_k = \gamma_k$ when $\sigma (N) =N$ and $\beta_l = \gamma_l$ when $\sigma (l) = N$).

It follows from the above definition that an $L_\infty$ $H$-pseudoalgebra $(\mathcal{L}, \{ \beta_k \}_{k \geq 1} )$ is a representation of itself. This is called the {\em adjoint representation}.

\begin{remark}
    Any representation of a Lie $H$-pseudoalgebra (see Equation (\ref{rep-iden})) can be seen as a representation of the corresponding $L_\infty$ $H$-pseudoalgebra concentrated in degree $0$.
\end{remark}

Given a Lie $H$-pseudoalgebra and a representation of it, one can construct the semidirect product Lie $H$-pseudoalgebra \cite{bakalov-andrea-kac}. This can be generalized in the strongly homotopy context.

\begin{proposition}\label{prop-semid}
    Let $(\mathcal{L}, \{ \beta_k \}_{k \geq 1} )$ be an $L_\infty$ $H$-pseudoalgebra and $(\mathcal{M} , \{ \gamma_k \}_{k \geq 1})$ be a representation of it. Then $(\mathcal{L} \oplus \mathcal{M}, \{ \overline{\beta}_k \}_{k \geq 1} )$ is an $L_\infty$ $H$-pseudoalgebra, where the structure maps $\{ \overline{\beta}_k \}_{k \geq 1}$ are given by 
    \begin{align}\label{semi-br}
        (\overline{\beta}_k) \big(  (x_1, u_1), \ldots, (x_k, u_k) \big) := \big(  \beta_k (x_1, \ldots, x_k) , \sum_{i=1}^k (-1)^{k-i} ~\gamma_k (x_1, \ldots, \widehat{x_i}, \ldots, x_k, u_i) \big).
    \end{align}
\end{proposition}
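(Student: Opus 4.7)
The plan is to verify the axioms of an $L_\infty$ $H$-pseudoalgebra for $(\mathcal{L} \oplus \mathcal{M}, \{\overline{\beta}_k\}_{k \geq 1})$ directly, reducing each higher Jacobi identity to the axioms already available for $\{\beta_k\}$ and $\{\gamma_k\}$. First I would check the basic data: $\mathrm{deg}(\overline{\beta}_k) = k-2$ is automatic since $\mathrm{deg}(\beta_k) = \mathrm{deg}(\gamma_k) = k-2$, and the polylinearity and $H^{\otimes k}$-linearity of $\overline{\beta}_k$ follow componentwise from the corresponding properties of $\beta_k$ and $\gamma_k$. For graded skew-symmetry, the $\mathcal{L}$-component is $\beta_k$, which is skew-symmetric by hypothesis. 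The $\mathcal{M}$-component should be understood as the skew-symmetrization of $\gamma_k$ in the $\mathcal{L}$-entries with respect to the position of the $\mathcal{M}$-entry: the factor $(-1)^{k-i}$ in (\ref{semi-br}) is exactly the sign cost of moving the $i$-th entry into the last slot, so together with the skew-symmetry of $\gamma_k$ on its $\mathcal{L}^{\boxtimes k-1}$ factors this produces a map that is graded skew-symmetric on the full list of inputs.

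The substantive content is the family of higher Jacobi identities (\ref{higher-j}). By multilinearity of $\overline{\beta}_k$ and of the Jacobi expressions, it suffices to evaluate on tuples of homogeneous inputs of the form $(x,0)$ with $x \in \mathcal{L}$ or $(0,u)$ with $u \in \mathcal{M}$. A key simplification is that whenever \emph{two or more} entries lie in $\mathcal{M}$, every $\overline{\beta}_k$ appearing in the Jacobi sum vanishes: the $\mathcal{L}$-component $\beta_k$ has a zero argument, and each summand $\gamma_k(x_1,\ldots,\widehat{x_i},\ldots,x_k,u_i)$ of the $\mathcal{M}$-component still contains at least one zero argument. Hence the identity is trivially $0=0$ in this case.

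Two cases remain. If every entry is in $\mathcal{L}$, the $\mathcal{L}$-projection of (\ref{higher-j}) for $\overline{\beta}_k$ is exactly (\ref{higher-j}) for $\beta_k$, while the $\mathcal{M}$-projection vanishes term by term. If exactly one entry lies in $\mathcal{M}$, I would use graded skew-symmetry to move it to the last slot; the identity then becomes precisely axiom (ii) of the representation definition, modulo combining the $(-1)^{k-i}$ prefactor from (\ref{semi-br}) with the shuffle sign $(-1)^\sigma$, the Koszul sign $\epsilon(\sigma)$, and the $(-1)^{l(k-1)}$ of (\ref{higher-j}). This sign reconciliation is the main obstacle, since one must verify that $(-1)^{k-i}$, applied at each of the inner and outer occurrences of $\overline{\beta}$, recombines consistently with the various shuffle signs to reproduce the convention ``$\beta_k = \gamma_k$ when $\sigma(N)=N$ and $\beta_l = \gamma_l$ when $\sigma(l)=N$'' used in the representation axiom.

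A cleaner alternative that avoids this sign bookkeeping is to use the equivalent description from Proposition \ref{prop-shifted} and Theorem \ref{mc-thm}. Let $\eta = \sum_k \eta_k$ and $\mu = \sum_k \mu_k$ be the shifted degree $-1$ symmetric brackets on $\mathcal{L}[-1]$ and the shifted representation maps on $\mathcal{M}[-1]$. Define $\overline{\eta}_k$ on $(\mathcal{L} \oplus \mathcal{M})[-1]$ by taking $\eta_k$ on pure $\mathcal{L}[-1]$-inputs, the symmetric extension of $\mu_k$ on inputs with exactly one $\mathcal{M}[-1]$-entry, and zero otherwise; the shifted normalization in (\ref{eta}) absorbs the awkward $(-1)^{k-i}$ prefactors into the graded symmetry of $\overline{\eta}_k$. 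Then $\llbracket \overline{\eta}, \overline{\eta} \rrbracket = 0$ in the graded Lie algebra of Theorem \ref{mc-thm} follows by projecting onto the $\mathcal{L}[-1]$- and $\mathcal{M}[-1]$-components, where these identities are, respectively, $\llbracket \eta, \eta\rrbracket = 0$ and the shifted representation identity. Invoking Theorem \ref{mc-thm} once more gives the desired $L_\infty$ $H$-pseudoalgebra structure on $\mathcal{L} \oplus \mathcal{M}$, which unwinds to (\ref{semi-br}).
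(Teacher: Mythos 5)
The paper itself offers no written proof of this proposition (it is treated as a routine check), and your primary route --- direct verification, stratifying the inputs by how many of them lie in $\mathcal{M}$ --- is exactly the verification being left implicit. Your reductions are correct: every Jacobi term vanishes when two or more entries lie in $\mathcal{M}$, pure $\mathcal{L}$-inputs reproduce (\ref{higher-j}) for $\{\beta_k\}$ in the $\mathcal{L}$-component and vanish in the $\mathcal{M}$-component, and the single-$\mathcal{M}$-entry case is where all the content sits. But both of your routes stop precisely there: in the direct route you name the sign reconciliation ``the main obstacle'' and do not carry it out, and in the Maurer--Cartan route the same computation is relocated into the unproved assertion that the symmetric extension of the shifted representation maps ``unwinds to (\ref{semi-br})''. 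As written, this is a correct plan whose one nontrivial step is deferred in both versions.

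The check does close, and you should include it. Place the $\mathcal{M}$-entry in the last slot ($x_1,\dots,x_{N-1}\in\mathcal{L}$, $x_N=u\in\mathcal{M}$), which is legitimate because the Jacobi expression built from the skew-symmetric maps $\overline{\beta}_k$ is itself skew-symmetric in its inputs. A shuffle $\sigma\in\mathrm{Sh}(l,k-1)$ then has either $\sigma(N)=N$ or $\sigma(l)=N$. If $\sigma(N)=N$, the inner bracket is $\big(\beta_l(x_{\sigma(1)},\dots,x_{\sigma(l)}),0\big)$, and in the outer bracket only the $i=k$ summand of (\ref{semi-br}) survives, with sign $(-1)^{k-k}=+1$, giving $\gamma_k\big(\beta_l(\dots),x_{\sigma(l+1)},\dots,x_{\sigma(N-1)},u\big)$ --- literally the corresponding term of axiom (ii) of the definition of a representation. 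If $\sigma(l)=N$, only the $i=l$ summand survives inside (again sign $+1$), producing $\big(0,\gamma_l(x_{\sigma(1)},\dots,x_{\sigma(l-1)},u)\big)$, and only the $i=1$ summand survives outside, contributing the factor $(-1)^{k-1}$; this factor, together with the accompanying permutation of the $H^{\otimes k}$-factors, is exactly the skew-symmetry sign that converts ``$\gamma_k$ with the $\mathcal{M}$-valued argument in the first slot'' into ``$\gamma_k$ with it in the last slot'', i.e.\ it implements the paper's convention that the outer map is $\gamma_k$ when the inner output lies in $\mathcal{M}$. Hence each Jacobi term for $\overline{\beta}$ matches the corresponding term of the representation identity (with Koszul signs suppressed to the same extent as in (\ref{semi-br}) itself), and the proposition follows; the same bookkeeping, now sign-free because the shifted maps are symmetric, is what justifies the ``unwinds to (\ref{semi-br})'' claim in your alternative route.
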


\medskip

Let $(\mathcal{L}, \{ \beta_k \}_{k \geq 1} )$ be an $L_\infty$ $H$-pseudoalgebra and $\mathcal{M}$ be a graded left $H$-module equipped with a collection of polylinear maps $\{  \gamma_k \in \mathrm{Hom}^{k-2}_{H^{\otimes k}} (\mathcal{L}^{\boxtimes k-1} \boxtimes \mathcal{M}, H^{\otimes k} \otimes_H \mathcal{M}) \}_{k \geq 1}$ such that each $\gamma_k$ is skew-symmetric on simultaneous permutations of the factors of $\mathcal{L}^{\boxtimes k-1}$ and first $k-1$ factors of $H^{\otimes k}$. Then on the direct sum graded left $H$-module $\mathcal{L} \oplus \mathcal{M}$, we define a collection $\{ \overline{\beta}_k \}_{k \geq 1}$ of skew-symmetric polylinear maps by (\ref{semi-br}). Then it can be easily checked that $(\mathcal{L} \oplus \mathcal{M}, \{ \overline{\beta}_k \}_{k \geq 1})$ is an $L_\infty$ $H$-pseudoalgebra if and only if $(\mathcal{M} , \{ \gamma_k \}_{k \geq 1})$ is a representation of the $L_\infty$ $H$-pseudoalgebra $(\mathcal{L}, \{ \beta_k \}_{k \geq 1} )$.

Let $(\mathcal{L}, \{ \beta_k \}_{k \geq 1} )$ be an $L_\infty$ $H$-pseudoalgebra and $(\mathcal{M} , \{ \gamma_k \}_{k \geq 1})$ be a representation of it. Consider the semidirect product $L_\infty$ $H$-pseudoalgebra $(\mathcal{L} \oplus \mathcal{M} , \{ \overline{\beta}_k \}_{k \geq 1})$ given in Proposition \ref{prop-semid}. This $L_\infty$ $H$-pseudoalgebra gives rise to the Maurer-Cartan element $\overline{ \eta} = \sum_{k \geq 1} \overline{\eta}_k$ in the graded Lie algebra
\begin{align*}
    \big( \oplus_{p \in \mathbb{Z}} \mathrm{Hom}^p_\mathrm{poly} (   (\mathcal{L} \oplus \mathcal{M}) [-1],   (\mathcal{L} \oplus \mathcal{M}) [-1]  ), \llbracket ~, ~ \rrbracket   \big),
\end{align*}
where $\overline{\eta}_k = (-1)^{\frac{k (k-1)}{2}} s \circ \overline{\beta}_k \circ (s^{-1})^{\otimes k}$ for $k \geq 1$. This yields a cochain complex $\{ C^\bullet_{\text{shLie-}H} (\mathcal{L} \oplus \mathcal{M}, \mathcal{L} \oplus \mathcal{M}), \delta_{\overline{\eta}} \}$ induced by the Maurer-Cartan element $\overline{\eta}$. More precisely,
\begin{align*}
    C^n_{\text{shLie-}H} (\mathcal{L} \oplus \mathcal{M}, & \mathcal{L} \oplus \mathcal{M}) := \mathrm{Hom}_{\text{poly}}^{-(n-1)} \big( (\mathcal{L} \oplus \mathcal{M})[-1], (\mathcal{L} \oplus \mathcal{M})[-1]   \big) ~~~ \text{ and } \\
   & \delta_{\overline{\eta}} (\eta) := (-1)^{n-1} \llbracket \overline{\eta} , \eta \rrbracket, \text{ for } \eta \in  C^n_{\text{shLie-}H}.
\end{align*}
For each $n \in \mathbb{Z}$, we define a subspace  $C^n_{\text{shLie-}H} (\mathcal{L}, \mathcal{M}) \subset  C^n_{\text{shLie-}H} (\mathcal{L} \oplus \mathcal{M}, \mathcal{L} \oplus \mathcal{M})$ by
\begin{align*}
     C^n_{\text{shLie-}H} (\mathcal{L} , \mathcal{M}) := \mathrm{Hom}_{\text{poly}}^{-(n-1)} \big( \mathcal{L} [-1], \mathcal{M}[-1]   \big) = \oplus_{k \geq 1} \mathrm{Hom}^{-(n-1)}_{H^{\otimes k}}  \big(  ( \mathcal{L} [-1])^{\otimes k}, H^{\otimes k} \otimes_H \mathcal{M}[-1]  \big).
\end{align*}
Then it can be easily seen that $\delta_{\overline{\eta}}  \big( C^n_{\text{shLie-}H} (\mathcal{L} , \mathcal{M})   \big) \subset C^{n+1}_{\text{shLie-}H} (\mathcal{L} , \mathcal{M})$. In other words, we get a subcomplex $\{ C^\bullet_{\text{shLie-}H} (\mathcal{L}, \mathcal{M}), \delta_{\overline{\eta}} \}$ of the cochain complex $\{ C^\bullet_{\text{shLie-}H} (\mathcal{L} \oplus \mathcal{M},  \mathcal{L} \oplus \mathcal{M}), \delta_{\overline{\eta}} \}$. The cohomology groups of this subcomplex are called the cohomology groups of the $L_\infty$ $H$-pseudoalgebra $\mathcal{L}$ with coefficients in the representation $\mathcal{M}$.    

\subsection{$\Gamma$-actions} Let $H$ be a cocommutative Hopf algebra and $\Gamma$ be a group (not necessarily finite) acting on $H$ by Hopf algebra automorphisms. We denote the action of $\Gamma$ on $H$ by $g \cdot f$, for $g \in \Gamma$ and $f \in H$. That is, $g \cdot f = g f g^{-1}$. Consider the smash product $\widetilde{H} = H \# {\bf k}[\Gamma]$. As an associative algebra, it is the semi-direct product of $H$ and ${\bf k}[\Gamma]$, and as a coassociative coalgebra it is the tensor product of coalgebras. Note that, for any finite nonempty set $I$, there is a map $\delta_I : \widetilde{H}^{\otimes I} \rightarrow H^{\otimes I} \otimes_H \widetilde{H}$ given by
\begin{align*}
    \delta_I (\otimes_{i \in I} f_i g_i) = \big(  \otimes_{i \in I} f_i  \big) \otimes g ~~ \text{ if all } g_i\text{'s are equal to some } g
\end{align*}
and zero otherwise. Let $\mathcal{L} = \oplus_{n \in \mathbb{Z}} \mathcal{L}_n$ be a graded left $\widetilde{H}$-module (thus, an object in $\mathcal{M}^*_{gr} (   \widetilde {H})$). This is equivalent to the fact that $\mathcal{L}$ is a graded left $H$-module equipped with an action of $\Gamma$ on it that is compatible with the action of $H$. More precisely, the correspondence is given by
\begin{align*}
    (g \cdot f)(x) = g (f (g^{-1} x)) , \text{ for } f \in H, g \in \Gamma \text{ and } x \in \mathcal{L}.
\end{align*}
Let $\mathcal{M}^*_{gr, \Gamma} (H)$ be the graded subcategory of $\mathcal{M}^*_{gr} (H)$ whose objects are graded left $\widetilde{H}$-modules and with those graded polylinear maps of $\mathcal{M}^*_{gr} (H)$ that commute with the action of $\Gamma$ in the sense that
\begin{align*}
    \varphi ( \{ g \psi_i \}_{i \in I}) = g \cdot \varphi (\{ \psi_i \}_{i \in I}), \text{ for all } g \in \Gamma.
\end{align*}
Then $\mathcal{M}^*_{gr, \Gamma} (H)$ is also a graded pseudotensor category. Moreover, there is a graded pseudotensor functor $\delta : \mathcal{M}^*_{gr} (\widetilde{H}) \rightarrow \mathcal{M}^*_{gr, \Gamma} (H)$ defined as follows. For an object $\mathcal{L}$ in $\mathcal{M}^*_{gr} (\widetilde{H})$ (i.e. a graded left $\widetilde{H}$-module), we set $\delta (\mathcal{L}) = \mathcal{L}$. For a polylinear map $\varphi \in \mathrm{Lin} (\{ \mathcal{L}_i \}_{i \in I}, \mathcal{M})$ in $\mathcal{M}^*_{gr} (\widetilde{H})$, we set $\delta (\varphi)$ to be the composition
\begin{align*}
    \boxtimes_{i \in I} \mathcal{L}_i \xrightarrow{ \varphi} \widetilde{H}^{\otimes I} \otimes_{ \widetilde{H}} \mathcal{M} \xrightarrow{ \delta_I \otimes_{\widetilde{H}} \mathrm{id} } (H^{\otimes I} \otimes_H \widetilde{H}) \otimes_{\widetilde{H}} \mathcal{M} \xrightarrow{ \cong } H^{\otimes I} \otimes_H \mathcal{M}.
\end{align*}
Since the maps $\delta_I$ are compatible with the actions of the symmetric groups and with the comultiplication of $\widetilde{H}$, it follows that $\delta$ is compatible with the actions of the symmetric groups and with compositions of polylinear maps.

The proof of the following result is similar to \cite[Theorem 5.1]{bakalov-andrea-kac}.

\begin{proposition}
    For a finite group $G$, the functor $\delta : \mathcal{M}^*_{gr} (\widetilde{H}) \rightarrow \mathcal{M}^*_{gr, \Gamma} (H)$ is an equivalence of graded pseudotensor categories.
\end{proposition}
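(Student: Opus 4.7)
The plan is to follow the strategy of \cite[Theorem 5.1]{bakalov-andrea-kac} and adapt it to the present graded pseudotensor setting. To show that $\delta$ is an equivalence of graded pseudotensor categories, I need to verify: (a) essential surjectivity on objects, (b) full faithfulness at the level of polylinear maps for each indexing set $I$ and each degree $p$, and (c) compatibility with the pseudotensor data (symmetric group actions, units, and compositions).

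For (a), a graded left $\widetilde{H}$-module is the same as a graded left $H$-module equipped with a compatible $\Gamma$-action via the formula $(g \cdot f)(x) = g(f(g^{-1}x))$, so the objects of $\mathcal{M}^*_{gr}(\widetilde{H})$ and $\mathcal{M}^*_{gr,\Gamma}(H)$ coincide and $\delta$ is the identity on objects. For (b), I would show that for each family $\{\mathcal{L}_i\}_{i \in I}$ and $\mathcal{M}$ and each degree $p \in \mathbb{Z}$, the induced map
\[
\delta : \mathrm{Hom}^{p}_{\widetilde{H}^{\otimes I}}\bigl(\boxtimes_{i \in I} \mathcal{L}_i,\, \widetilde{H}^{\otimes I} \otimes_{\widetilde{H}} \mathcal{M}\bigr) \;\longrightarrow\; \mathrm{Hom}^{p}_{H^{\otimes I}}\bigl(\boxtimes_{i \in I} \mathcal{L}_i,\, H^{\otimes I} \otimes_H \mathcal{M}\bigr)^{\Gamma}
\]
is a bijection. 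The construction of an inverse rests on the vector-space decomposition $\widetilde{H}^{\otimes I} = \bigoplus_{\vec{g} \in \Gamma^I} H^{\otimes I} \cdot \vec{g}$, together with the observation that on non-diagonal summands (those $\vec{g}$ not of the form $(g,\ldots,g)$) the projection $\delta_I$ vanishes. Given a $\Gamma$-equivariant $H^{\otimes I}$-linear map $\widetilde{\varphi}$, one reconstructs an $\widetilde{H}^{\otimes I}$-linear lift by declaring its value on the diagonal copy of $H^{\otimes I}$ to be $\widetilde{\varphi}$ and propagating by $\widetilde{H}^{\otimes I}$-linearity; the $\Gamma$-equivariance of $\widetilde{\varphi}$ is precisely the compatibility needed so that the resulting map is well-defined after passing to the relative tensor product over $\widetilde{H}$.

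For (c), the $S_I$-equivariance of $\delta$ follows because $\delta_I$ intertwines the permutation action on $\widetilde{H}^{\otimes I}$ with that on $H^{\otimes I}$; the identity polylinear maps $\mathrm{id}_{\mathcal{L}}$ are manifestly preserved; and preservation of composition reduces to checking that, for a surjection $\pi : J \twoheadrightarrow I$, the iterated comultiplication functor $\Delta^{(\pi)}$ on $\widetilde{H}$ intertwines with that on $H$ through $\delta_I$ and $\delta_J$. This compatibility is a direct consequence of the group-like property of the elements of $\Gamma \subset \widetilde{H}$ inside the tensor-product coalgebra structure.

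The main obstacle is the construction of the inverse in step (b): one must carefully verify that the reconstructed map is genuinely $\widetilde{H}^{\otimes I}$-linear (not merely $H^{\otimes I}$-linear), that it is well-defined modulo the relations defining $\otimes_{\widetilde{H}}$, and that the $\Gamma$-equivariance of $\widetilde{\varphi}$ is both necessary and sufficient for this consistency. The finiteness of $\Gamma$ enters here to ensure that the lift can be produced by a finite sum over $\Gamma$ analogous to an averaging argument. The grading plays no essential role beyond bookkeeping, since every structural map ($\delta_I$, the multiplication in $\widetilde{H}$, the $\Gamma$-action, and the identifications $(H^{\otimes I}\otimes_H \widetilde{H})\otimes_{\widetilde{H}}\mathcal{M} \cong H^{\otimes I}\otimes_H \mathcal{M}$) is of degree zero, so the bijection holds degree-wise and the ungraded argument of \cite{bakalov-andrea-kac} carries over verbatim.
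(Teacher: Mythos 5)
Your proposal is correct and follows essentially the same route as the paper, which itself gives no independent argument but defers to the proof of \cite[Theorem 5.1]{bakalov-andrea-kac}: objects coincide, the polylinear-map spaces are identified with the $\Gamma$-equivariant maps via the splitting of $\widetilde{H}^{\otimes I}$ over $\Gamma^{I}$, the inverse is built by a finite sum over $\Gamma$ (exactly the formula the paper records for $\widetilde{\beta}_k$ after the ensuing theorem), and the grading only enters as degree-zero bookkeeping.
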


As a corollary of the above result, we get the following.

\begin{thm}
    Let $H$ be a cocommutative Hopf algebra and $\Gamma$ be a finite group. Let $\widetilde{H} = H  \# {\bf k}[\Gamma]$. Then an $L_\infty$ $\widetilde{H}$-pseudoalgebra is same as an $L_\infty$ $H$-pseudoalgebra $(\mathcal{L}, \{ \beta_k \}_{k \geq 1})$ equipped with an action of $\Gamma$ on the graded left $H$-module $\mathcal{L}$ that satisfies
    \begin{align}\label{invar}
        \beta_k (gx_1, \ldots, gx_k) = g \cdot \beta_k (x_1, \ldots, x_k ),
    \end{align}
    for $k \geq 1$; $x_1, \ldots, x_k \in \mathcal{L}$ and $g \in \Gamma$.
\end{thm}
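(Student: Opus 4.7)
The plan is to deduce this theorem as an immediate corollary of the preceding proposition, by exploiting the fact that the definition of an $L_\infty$ algebra in a graded pseudotensor category $\mathcal{C}_{gr}$ is phrased entirely in terms of the pseudotensor data---objects, graded polylinear maps of prescribed degrees, the symmetric group actions on those hom spaces, and the composition maps. As a consequence, any equivalence of graded pseudotensor categories automatically induces a bijection between their respective classes of $L_\infty$ algebras. Applied to the equivalence $\delta : \mathcal{M}^*_{gr}(\widetilde{H}) \xrightarrow{\simeq} \mathcal{M}^*_{gr, \Gamma}(H)$ established in the preceding proposition, this yields a one-to-one correspondence between $L_\infty$ $\widetilde{H}$-pseudoalgebras and $L_\infty$ algebras in the subcategory $\mathcal{M}^*_{gr, \Gamma}(H)$.

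Next, I would unpack what an $L_\infty$ algebra in $\mathcal{M}^*_{gr, \Gamma}(H)$ amounts to, using the explicit description of this subcategory recalled just before the proposition. An object of $\mathcal{M}^*_{gr, \Gamma}(H)$ is a graded left $H$-module $\mathcal{L}$ carrying a compatible $\Gamma$-action (equivalently, a graded left $\widetilde{H}$-module), while a polylinear map is one in $\mathcal{M}^*_{gr}(H)$ that additionally commutes with the $\Gamma$-actions, $\varphi(\{g\psi_i\}_{i \in I}) = g \cdot \varphi(\{\psi_i\}_{i \in I})$. Specializing this intertwining requirement to the operations $\beta_k : \mathcal{L}^{\boxtimes k} \to H^{\otimes k} \otimes_H \mathcal{L}$ reduces precisely to the equivariance identity (\ref{invar}). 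Hence an $L_\infty$ algebra in $\mathcal{M}^*_{gr, \Gamma}(H)$ on $\mathcal{L}$ is exactly an $L_\infty$ $H$-pseudoalgebra $(\mathcal{L},\{\beta_k\}_{k \geq 1})$ each of whose structure maps is $\Gamma$-equivariant; the graded skew-symmetry and the higher Jacobi identities carry over unchanged, since $\delta$ preserves both the symmetric group actions and the composition operations of the pseudotensor structure.

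The only bookkeeping required is to verify that the $\Gamma$-equivariance of polylinear maps in $\mathcal{M}^*_{gr, \Gamma}(H)$---stated as a condition on compositions---translates on elements to the pointwise identity $\beta_k(gx_1, \ldots, gx_k) = g \cdot \beta_k(x_1, \ldots, x_k)$. This is direct from the prescription of the $\Gamma$-action on a $k$-ary polylinear map (conjugation of the inputs by $g^{-1}$ and of the output by $g$), so that the fixedness condition unwinds precisely to (\ref{invar}). I do not expect any substantive obstacle beyond this mild translation; the main content of the theorem is already concentrated in the preceding equivalence of graded pseudotensor categories.
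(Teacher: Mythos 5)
Your proposal matches the paper's treatment: the theorem is stated there as a direct corollary of the preceding proposition (the equivalence $\delta : \mathcal{M}^*_{gr}(\widetilde{H}) \to \mathcal{M}^*_{gr,\Gamma}(H)$ of graded pseudotensor categories), with the $L_\infty$ structure transported along the equivalence and the $\Gamma$-equivariance condition unwinding to (\ref{invar}) exactly as you describe. Correct, and essentially the same argument as the paper's.
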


Let $\{ \beta_k \}_{k \geq 1}$ be an $L_\infty$ $H$-pseudoalgebra structure on $\mathcal{L}$ that satisfies (\ref{invar}). Then the $L_\infty$ $\widetilde{H}$-pseudoalgebra structure $\{  \widetilde{\beta}_k \}_{k \geq 1}$ on $\mathcal{L}$ is given by
\begin{align*}
    \widetilde{\beta}_k (x_1, \ldots, x_k) = \sum_{g \in \Gamma} ((g^{-1} \otimes 1 \otimes \cdots \otimes 1) \otimes_{\widetilde{H}} 1) \beta_k (gx_1, \ldots, x_k ), \text{ for } k \geq 1.
\end{align*}

\subsection{Annihilation algebras}
Let $H$ be a cocommutative Hopf algebra with the multiplication $\Delta$ and count $\varepsilon$. Let $Y$ be an $H$-bimodule which is also a commutative $H$-differential algebra for both the left and right actions of $H$.

For any graded left $H$-module $\mathcal{L} = \oplus_{n \in \mathbb{Z}} \mathcal{L}^n$, we define $\mathcal{A}_Y \mathcal{L} := Y \otimes_H \mathcal{L}$. Then there is a left $H$-action on $\mathcal{A}_Y \mathcal{L}$ given by $h (y \otimes_H x) = hy \otimes_H x$, for $h \in H$, $y \in Y$ and $x \in \mathcal{L}$. Additionally, if $(\mathcal{L}, \{ \beta_k \}_{k \geq 1})$ is an $L_\infty$ $H$-pseudoalgebra, then we can define a collection of products $\{ \widetilde{\beta}_k : (\mathcal{A}_Y \mathcal{L})^{\otimes k} \rightarrow \mathcal{A}_Y \mathcal{L} \}_{k \geq 1}$ by
\begin{align*}
    \widetilde{\beta}_k \big(  &y_1 \otimes_H x_1, \ldots, y_k \otimes_H x_k  \big) = \sum_i (y_1 f_{1 i}) \cdots (y_k f_{k i}) \otimes_H e_i\\
    &  ~~~~ \text{ if } \beta_k (x_1, \ldots, x_k) = \sum_i (f_{1 i} \otimes \cdots \otimes f_{k i}) \otimes_H e_i.
\end{align*}
It is easy to see that these maps are well-defined. Moreover, we have the following result.
\begin{proposition}
    Let $(\mathcal{L}, \{ \beta_k \}_{k \geq 1})$ be an $L_\infty$ $H$-pseudoalgebra. Then the pair $(\mathcal{A}_Y \mathcal{L}, \{ \widetilde{\beta}_k \}_{k \geq 1})$ is an $L_\infty$ algebra whose structure maps additionally satisfy
    \begin{align*}
        h \big( \widetilde{\beta}_k (   y_1 \otimes_H x_1, \ldots, y_k \otimes_H x_k  ) \big) = \widetilde{\beta}_k \big( h_{(1)} (y_1 \otimes_H x_1), \ldots, h_{(k)} (y_k \otimes_H x_k) \big), \text{ for any } h \in H.
    \end{align*}
\end{proposition}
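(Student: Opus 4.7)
The plan is to verify three things in order: (a) the maps $\widetilde{\beta}_k$ are well-defined on the balanced tensor product $Y \otimes_H \mathcal{L}$; (b) the collection $\{\widetilde{\beta}_k\}_{k \geq 1}$ satisfies the graded skew-symmetry and higher Jacobi identities of an $L_\infty$ algebra; and (c) the stated $H$-equivariance property holds. Throughout, I will repeatedly invoke the fact that $Y$ is a commutative $H$-differential algebra for both the left and right $H$-actions, which means in particular that $h(y y') = \sum h_{(1)}(y) \, h_{(2)}(y')$ and that the product in $Y$ is insensitive to permutation of the factors.

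For well-definedness of $\widetilde{\beta}_k$, note that if $\beta_k(x_1, \ldots, x_k) = \sum_i (f_{1i} \otimes \cdots \otimes f_{ki}) \otimes_H e_i$, then the $H^{\otimes k}$-linearity of $\beta_k$ together with the balanced tensor relations in $Y \otimes_H \mathcal{L}$ make the formula for $\widetilde{\beta}_k$ independent of chosen representatives; this is the graded analogue of the standard Lie $H$-pseudoalgebra argument. The graded skew-symmetry of $\widetilde{\beta}_k$ is then an immediate consequence of the graded skew-symmetry of $\beta_k$ combined with the commutativity of the product in $Y$: applying a permutation $\sigma \in S_k$ to $(y_1 \otimes_H x_1, \ldots, y_k \otimes_H x_k)$ permutes the $y_\bullet f_{\bullet i}$ factors (whose product is invariant by commutativity) and simultaneously permutes the $f_{\bullet i}$ factors matching the $\sigma$-action on $H^{\otimes k}$, so the overall effect is captured by the Koszul sign that appears in the skew-symmetry axiom for $\beta_k$.

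The principal task is the higher Jacobi identity. Given $N$ elements $y_1 \otimes_H x_1, \ldots, y_N \otimes_H x_N$, I would fix $k+l = N+1$, expand $\widetilde{\beta}_k(\widetilde{\beta}_l(\cdots), \cdots)$ by writing $\beta_l(x_{\sigma(1)}, \ldots, x_{\sigma(l)}) = \sum_j (f_{1j}^{(l)} \otimes \cdots \otimes f_{lj}^{(l)}) \otimes_H e_j$ and feeding $e_j$ into $\beta_k$, obtaining nested coproduct expressions as in equations~(\ref{xy-z})--(\ref{x-yz}). The key observation is that when one tensors over $H$ with $Y$, those nested comultiplications $\Delta^{(\pi)}$ get absorbed into products $y_{\bullet} f_{\bullet j}^{(l)} f_{\bullet, j,i}^{(k)}$ in $Y$ by means of the rule $h(yy') = \sum h_{(1)}(y) h_{(2)}(y')$; associativity of the product in $Y$ together with coassociativity of $\Delta$ precisely matches the pseudo-level composition with the annihilation-level composition $\widetilde{\beta}_k \circ \widetilde{\beta}_l$. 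Summing over $k+l = N+1$ and $\sigma \in \mathrm{Sh}(l, k-1)$ with the appropriate Koszul signs, the whole expression becomes the image under $Y \otimes_H (-)$ of the left-hand side of (\ref{higher-j}) for $\{\beta_k\}$, which vanishes. The main obstacle is exactly this bookkeeping: keeping track of which copy of $H$ gets acted on by which factor of $Y$ through the iterated coproducts, and verifying that commutativity of $Y$ together with equivariance of composition under $S_k$-actions yields the required compatibility. Finally, the $H$-equivariance property $h(\widetilde{\beta}_k(\cdots)) = \widetilde{\beta}_k(h_{(1)}(\cdot), \ldots, h_{(k)}(\cdot))$ is a direct consequence of iterated application of $h(y_1 \cdots y_k) = \sum h_{(1)}(y_1) \cdots h_{(k)}(y_k)$ (which follows from $Y$ being an $H$-differential algebra) applied to the defining formula of $\widetilde{\beta}_k$.
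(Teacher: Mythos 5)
Your proposal is correct and follows essentially the same route as the paper: one shows that each composite $\widetilde{\beta}_k(\widetilde{\beta}_l(\cdots),\cdots)$ is obtained from the corresponding pseudo-level composite $\beta_k(\beta_l(\cdots),\cdots)$ by multiplying the $y$'s into the respective tensor factors of $H^{\otimes N}$ (the iterated comultiplications being absorbed by the $H$-differential algebra compatibility of $Y$ — note that in the Jacobi step it is the \emph{right} action rule $(yy')f = (yf_{(1)})(y'f_{(2)})$ that is used, since the structure maps involve $y_jf_{ji}$), so the higher Jacobi identity is inherited from $\mathcal{L}$, and the $H$-equivariance follows from the left-action rule $h(y_1\cdots y_k)=h_{(1)}(y_1)\cdots h_{(k)}(y_k)$ together with the bimodule property. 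The paper's proof is the same argument stated more tersely, with well-definedness and skew-symmetry likewise left to the reader.
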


\begin{proof}
Let $y_1 \otimes_H x_1, \ldots, y_N \otimes_H x_N \in \mathcal{A}_Y \mathcal{L}$ and $\sigma \in \mathrm{Sh}(l, k-1)$ be any shuffle with $k+l = N+1$. If
\begin{align*}
    \beta_k \big( \beta_l (x_{\sigma (1)}, \ldots, x_{\sigma (l)}), x_{\sigma (l+1)}, \ldots, x_{\sigma (N)} \big) = \sum_i (f^\sigma_{1 i} \otimes \cdots \otimes f^\sigma_{N i}) \otimes_H e_i
\end{align*}
then we observe that
\begin{align*}
&\widetilde{\beta}_k \big( \widetilde{\beta}_l (   y_{\sigma(1)} \otimes_H  x_{\sigma (1)}, \ldots,  y_{\sigma(l)} \otimes_H x_{\sigma (l)}),  y_{\sigma(l+1)} \otimes_H x_{\sigma (l+1)}, \ldots,  y_{\sigma(N)} \otimes_H x_{\sigma (N)}    \big) \\
&= \sum_i (y_{\sigma(1)} f^\sigma_{1i}) \cdots (y_{\sigma(N)} f^\sigma_{Ni}) \otimes_H e_i \\
&= (y_{\sigma(1)} \cdots y_{\sigma (N)}) (\mu_{H^{\otimes N}} \otimes 1) \beta_k \big( \beta_l (x_{\sigma (1)}, \ldots, x_{\sigma (l)}), x_{\sigma (l+1)}, \ldots, x_{\sigma (N)} \big).
\end{align*}
Here $\mu_{H^{\otimes N}} : H^{\otimes N} \rightarrow H$ is the multiplication of $H$. Hence the result follows after writing the expression of the higher Jacobi identities. The last part follows from the definition.
\end{proof}

Note that the base field ${\bf k}$ is an $H$-bimodule and a commutative associative $H$-differential algebra with the action $h 1 = \varepsilon (h)$, for $h \in H$ and $1 \in {\bf k}$. Then it follows from the above Proposition that if $(\mathcal{L}, \{ \beta_k \}_{k \geq 1})$ is an $L_\infty$ $H$-pseudoalgebra, then $(\mathcal{A}_{\bf k} \mathcal{L} = {\bf k} \otimes_H \mathcal{L}, \{ \widetilde{\beta}_k \}_{k \geq 1})$ is an $L_\infty$ algebra. Explicitly, we have $\mathcal{A}_{\bf k} \mathcal{L} = \mathcal{L}/ H_{+} \mathcal{L}$, where $H_{+} \mathcal{L} = \{ h \cdot x ~|~ h \in H \text{ with } \varepsilon (h) = 0 \text{ and } x \in \mathcal{L} \}$ and the operations $\{ \widetilde{\beta}_k \}_{k \geq 1}$ are given by
\begin{align*}
    \widetilde{\beta}_k \big( [x_1], \ldots, [x_k] \big) = \sum_i [\varepsilon (f_{1i}) \cdots \varepsilon (f_{ki}) e_i] \quad 
    \text{ if } \beta_k (x_1, \ldots, x_k) = \sum_i (f_{1i} \otimes \cdots \otimes f_{ki}) \otimes_H e_i.
\end{align*}
Here $[x]$ denotes the class of the element $x \in \mathcal{L}$ in the quotient $\mathcal{L}/H_{+} \mathcal{L}$.

\begin{definition}
    Let $Y = H^*$ be the dual of $H$. Then $Y = H^*$ is an $H$-bimodule and cocommutative associative $H$-differential algebra. For any $L_\infty$ $H$-pseudoalgebra $\mathcal{L} = (\mathcal{L} , \{ \beta_k \}_{k \geq 1})$, the $L_\infty$ algebra 
    \begin{align*}
        \mathcal{A} (\mathcal{L}) := (\mathcal{A}_{H^*} (\mathcal{L}) = H^* \otimes_H \mathcal{L}, \{ \widetilde{\beta}_k \}_{k \geq 1})
    \end{align*}
    is called the {\em annihilation algebra} of $\mathcal{L}$.
\end{definition}

In a forthcoming article, we will discuss annihilation algebra in more detail. More precisely, we aim to study differential $L_\infty$ algebras (following the work of Ritt \cite{ritt} about differential Lie algebras) and find their relations with annihilation algebras.

 \section{Examples of strongly homotopy Lie {\em H}-pseudoalgebras}\label{sec4}

 \subsection{$L_\infty$ conformal algebras} The concept of Lie conformal algebras was introduced by Kac \cite{kac,kac2} to study the operator product expansions of chiral fields in conformal field theory. More precisely, a {\em Lie conformal algebra} is a ${\bf k}[\partial]$-module $L$ equipped with a ${\bf k}$-linear map $[\cdot_\lambda \cdot] : L \otimes L \rightarrow L[\lambda]$, called the {\em $\lambda$-bracket} that satisfy

 - (conformal sesquilinearity) $[\partial x_\lambda y] = - \lambda [ x _\lambda y]$ and $[x_\lambda \partial y] = (\partial + \lambda) [x_\lambda y]$,

 - (skew-symmetry) $[y_\lambda x] = - [x_{-\lambda - \partial} y],$

 - (conformal Jacobi identity) $[x_\lambda [y_\nu z]] = [[x_\lambda y]_{\lambda + \nu} z] + [y_\nu [x_\lambda z]].$

 \medskip

\noindent It has been observed in \cite{bakalov-andrea-kac} that Lie conformal algebras are exactly Lie $H$-pseudoalgebras, where $H = {\bf k}[\partial]$ is the Hopf algebra of polynomials in one variable $\partial$. The correspondence between the $\lambda$-bracket and the pseudobracket is given by
 \begin{align*}
     [x_\lambda y] = \sum_i p_i (\lambda) e_i ~~~~ \text{ if and only if } ~~~~ [x * y] = \sum_i \big( p_i (-\partial) \otimes 1 \big) \otimes_{   {\bf k}[\partial]  } e_i.
 \end{align*}

 On the other hand, $L_\infty$ conformal algebras were introduced in \cite{sahoo-das} as the strongly homotopy analogue of Lie conformal algebras. Recall that an {\em $L_\infty$ conformal algebra} is a graded ${\bf k}[\partial]$-module $\mathcal{L} = \oplus_{n \in \mathbb{Z}} \mathcal{L}^n$ equipped with a collection
 \begin{align*}
     \{ l_k : \mathcal{L}^{\otimes k} \rightarrow \mathcal{L}[\lambda_1, \ldots, \lambda_{k-1}], ~ x_1 \otimes \cdots \otimes x_k \mapsto (l_k)_{\lambda_1, \ldots, \lambda_{k-1}} (x_1, \ldots, x_k) \text{ with } \text{deg} (l_k) = k-2 \}_{k \geq 1}
 \end{align*}
 of ${\bf k}$-linear maps that satisfy the following conditions:

 - each $l_k$ is conformal sesquilinear in the sense that
 \begin{align*}
     (l_k)_{\lambda_1, \ldots, \lambda_{k-1}} (x_1, \ldots, \partial x_i, \ldots ,x_{k-1}, x_k) =~& - \lambda_i (l_k)_{\lambda_1, \ldots, \lambda_{k-1}} (x_1, \ldots, x_k),\\
     (l_k)_{\lambda_1, \ldots, \lambda_{k-1}} (x_1, \ldots,x_{k-1}, \partial x_k) =~& (\partial + \lambda_1 + \cdots + \lambda_k) (l_k)_{\lambda_1, \ldots, \lambda_{k-1}} (x_1, \ldots, x_k),
 \end{align*}

 - each $l_k$ is graded skew-symmetric in the sense that
 \begin{align*}
 (l_k)_{\lambda_{1},\ldots,\lambda_{k-1}}(x_{1},\ldots,x_{k}) = \mathrm{sgn} (\sigma) \epsilon (\sigma)~ (l_k)_{\lambda_{\sigma (1)},\ldots,\lambda_{\sigma(k-1)}}(x_{\sigma(1)},\ldots,x_{\sigma (k-1)},x_{\sigma (k)}) \big|_{\lambda_k \mapsto \lambda_k^\dagger}, \text{ for } \sigma \in {S}_k,
 \end{align*}

 - for each $N \geq 1$ and homogeneous $x_1, \ldots, x_N \in \mathcal{L}$, the following higher conformal Jacobi identity holds:
\begin{align}
            \sum_{p+q = N+1} & \sum_{\sigma \in \mathrm{Sh}(q,N-q)}\mathrm{sgn}(\sigma) \epsilon(\sigma)(-1)^{q(p-1)} \\
           & (l_{p})_{  \lambda^\dagger_\sigma ,  \lambda_{\sigma (q+1)}, \ldots, \lambda_{\sigma (N-1)}  } \big(  (l_{q})_{ \lambda_{\sigma (1)}, \ldots, \lambda_{\sigma (q-1)}} (x_{\sigma(1)},\ldots,x_{\sigma(q)}),x_{\sigma(q+1)},\ldots,x_{\sigma(N)} \big) = 0. \nonumber
        \end{align}
        Note that $\sigma \in \mathrm{Sh}(q, N-q)$ implies that either $\sigma (q) = N$ or $\sigma (N) = N$. When $\sigma (q) = N$, we use the notation $\lambda_\sigma^\dagger =  - \partial - (\lambda_{\sigma (q+1)} + \cdots + \lambda_{\sigma (N)})$ and when $\sigma (N) = N$, we use the notation $\lambda_\sigma^\dagger = \lambda_{\sigma (1)} + \cdots + \lambda_{\sigma (q)}$ in the above identities.

 Then it can be checked that an $L_\infty$ conformal algebra is the same as an $L_\infty$ {\bf k}$[\partial]$-pseudoalgebra. More precisely,  $(\mathcal{L}, \{ l_k \}_{k \geq 1})$ is an $L_\infty$ conformal algebra if and only if $(\mathcal{L}, \{ \beta_k \}_{k \geq 1})$ is an  $L_\infty$ {\bf k}$[\partial]$-pseudoalgebra, where the correspondence between the structure maps are given by
 \begin{align*}
     (l_k)_{\lambda_1, \ldots, \lambda_{k-1}} (x_1, \ldots, x_k) = \sum_i p_i^{k, 1} (\lambda_1) \cdots p_i^{k, k-1} (\lambda_{k-1}) e_i ~~~~ \text{ if and only if } \\
     \beta_k (x_1, \ldots, x_k) = \big( p_i^{k, 1} (-\partial) \otimes \cdots \otimes p_i^{k, k-1} (-\partial) \otimes 1 \big) \otimes_{  {\bf k}[\partial] } e_i.
 \end{align*}

 \subsection{Current $L_\infty$ $H$-pseudoalgebras} Given a Lie algebra $\mathfrak{g}$, one can define a Lie $H$-pseudoalgebra structure on the left $H$-module $H \otimes \mathfrak{g}$ with the pseudobracket
 \begin{align*}
     [(f \otimes x) * (g \otimes y)] := ( f \otimes g) \otimes_H (1 \otimes [x, y]_\mathfrak{g}), \text{ for } f \otimes x, g \otimes y \in H \otimes \mathfrak{g}.
 \end{align*}
 This is called the {\em current Lie $H$-pseudoalgebra} associated to $\mathfrak{g}$, and it is denoted by $\mathrm{Cur}(\mathfrak{g})$. See \cite{bakalov-andrea-kac} for more details. This construction can be generalized to the homotopy context. More precisely, let $(\mathcal{L}, \{ l_k \}_{k \geq 1})$ be an $L_\infty$ algebra. Then the graded left $H$-module $H \otimes \mathcal{L}$ equipped with the graded polylinear maps 
 \begin{align*}
 \big\{ \beta_k \in \mathrm{Hom}_{H^{\otimes k}}^{k-2} \big(  (H \otimes \mathcal{L})^{\boxtimes k}, H^{\otimes k} \otimes_H (H \otimes \mathcal{L})   \big) \big\}_{k \geq 1}
 \end{align*}
 forms a $L_\infty$ $H$-pseudoalgebra, where
 \begin{align*}
     \beta_k (f_1 \otimes x_1, \ldots, f_k \otimes x_k) = (f_1 \otimes \cdots \otimes f_k) \otimes_H (1 \otimes l_k (x_1, \ldots, x_k)), \text{ for any } k \geq 1.
 \end{align*}
 The verification is straightforward.
 This is called the current $L_\infty$ $H$-pseudoalgebra associated to the $L_\infty$ algebra $(\mathcal{L}, \{ l_k \}_{k \geq 1})$. We denote it by $\mathrm{Cur}(\mathcal{L}).$

 More generally, let $H' \subset H$ be a Hopf subalgebra and $(\mathcal{L}, \{ \beta_k' \}_{k \geq 1})$ be an $L_\infty$ $H'$-pseudoalgebra. Then it can be verified that the pair $\mathrm{Cur}_{H'}^H \mathcal{L} = (H \otimes_{H'} \mathcal{L}, \{ \beta_k \}_{k \geq 1})$ is an $L_\infty$ $H$-pseudoalgebra, where the structure maps are given by
 \begin{align*}
     \beta_k \big( f_1 \otimes_{H'} x_1, \ldots, f_k \otimes_{H'} x_k \big) = \sum_i \big(  f_1 f_{1, i} \otimes \cdots \otimes f_k f_{k, i} \big) \otimes_H (1 \otimes_{H'} e_i) \\
     \text{ whenever } \beta_k' (x_1, \ldots, x_k) = \sum_{i} (f_{1, i} \otimes \cdots \otimes f_{k, i}) \otimes_{H'} e_i.
 \end{align*}

 \subsection{Graded Lie $H$-pseudoalgebras whose each arity is a free $H$-module of rank $1$} Let $\mathcal{L} = \oplus_{n \in \mathbb{Z}} H e_n$ be a graded left $H$-module whose each arity is a free $H$-module of rank $1$. Note that any polylinear map $\beta_k \in \mathrm{Hom}^{k-2}_{H^{\otimes k}} (\mathcal{L}^{\boxtimes k}, H^{\otimes k} \otimes_H \mathcal{L})$ is determined by the values $\beta_k (e_{i_1}, e_{i_2}, \ldots, e_{i_k})$, or equivalently, by elements $\alpha_{i_1, \ldots, i_k} \in H^{\otimes k}$ such that $\beta_k (e_{i_1}, \ldots, e_{i_k}) = \alpha_{i_1, \ldots, i_k} \otimes_H e_{i_1 + \cdots + i_k + k-2}$.

 Then we have the following.

 \begin{proposition}
     Let $\mathcal{L} = \oplus_{n \in \mathbb{Z}} H e_n$ be a graded left $H$-module whose each arity is a free $H$-module of rank $1$. Suppose there are maps $\{ \beta_k \in \mathrm{Hom}^{k-2}_{H^{\otimes k}} (\mathcal{L}^{\boxtimes k}, H^{\otimes k} \otimes_H \mathcal{L}) \}_{k \geq 1}$ which are determined by the elements $\alpha_{i_1, \ldots, i_k}$ ($k \geq 1$ and $i_1, \ldots, i_k \in \mathbb{Z}$). Then $(\mathcal{L}, \{ \beta_k \}_{k \geq 1})$ is a $L_\infty$ $H$-pseudoalgebra if and only if
     \begin{align*}
         \alpha_{i_{\sigma (1)}, \ldots, i_{\sigma (k)}} & = (-1)^\sigma \epsilon (\sigma; i_1, \ldots, i_k) \alpha_{i_1, \ldots, i_k}, \text{ for all } k \geq 1 \text{ and } \sigma \in S(k),\\
         \sum_{k+l = N+1} \sum_{\sigma \in \mathrm{Sh}(l, k-1)} &(-1)^\sigma \epsilon(\sigma) (-1)^{l(k-1)} \\  & \big( \alpha_{i_{\sigma (1)}, \ldots, i_{\sigma (l)}} \otimes 1^{\otimes k-1} \big) (\Delta^l \otimes \mathrm{id}^{\otimes k-1}) \alpha_{i_{\sigma (1)} + \cdots + i_{\sigma (l)} + l-2, i_{\sigma (l+1)}, \ldots, i_{\sigma (N)}} = 0,
     \end{align*}
     for all $N \in \mathbb{N}$. Here $\Delta^l : H \rightarrow H^{\otimes l}$ is the iterative comultiplication map.
 \end{proposition}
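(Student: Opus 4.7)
The plan is to unpack Definition \ref{shlieH} in the special case where $\mathcal{L} = \oplus_{n \in \mathbb{Z}} H e_n$ has each arity free of rank one. Both axioms of an $L_\infty$ $H$-pseudoalgebra (graded skew-symmetry and the higher Jacobi identities) are $H^{\otimes k}$-linear, so they are determined by their evaluations on tuples of generators $(e_{i_1}, \ldots, e_{i_k})$, and therefore reduce to identities among the defining elements $\{\alpha_{i_1,\ldots,i_k}\} \subset H^{\otimes k}$. The key structural observation enabling the reduction is that $\deg(\beta_k) = k-2$ forces $\beta_k(e_{i_1}, \ldots, e_{i_k})$ to sit in the graded component of $H^{\otimes k} \otimes_H \mathcal{L}$ of degree $i_1 + \cdots + i_k + k-2$, and since each such component is a free left $H$-module of rank one, this image is uniquely encoded by the element $\alpha_{i_1,\ldots,i_k} \in H^{\otimes k}$ via the formula in the statement.

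I would then derive the skew-symmetry condition by evaluating the graded skew-symmetry of $\beta_k$ at $(e_{i_1}, \ldots, e_{i_k})$ for each $\sigma \in S_k$: the Koszul sign produced is $\epsilon(\sigma; i_1, \ldots, i_k)$ since $|e_{i_s}| = i_s$, and comparing coefficients of the common generator $e_{i_1+\cdots+i_k+k-2}$, which is legitimate by rank-one freeness, yields precisely the first displayed identity. For the higher Jacobi identity, I would evaluate (\ref{higher-j}) on $(e_{i_1}, \ldots, e_{i_N})$. The technical heart lies in the pseudotensor composition in $\mathcal{M}^*_{gr}(H)$: substituting $\beta_l(e_{i_{\sigma(1)}}, \ldots, e_{i_{\sigma(l)}}) = \alpha_{i_{\sigma(1)},\ldots,i_{\sigma(l)}} \otimes_H e_{i_{\sigma(1)}+\cdots+i_{\sigma(l)}+l-2}$ into the first argument of $\beta_k$ triggers the functor $\Delta^{(\pi)}$ associated to the surjection $\pi$ that collapses the first $l$ positions into one, which amounts concretely to applying the iterated comultiplication $\Delta^l \colon H \to H^{\otimes l}$ to the first tensor factor of $\alpha_{i_{\sigma(1)}+\cdots+i_{\sigma(l)}+l-2,\, i_{\sigma(l+1)},\ldots, i_{\sigma(N)}}$ and then multiplying on the left, in $H^{\otimes N}$, by $\alpha_{i_{\sigma(1)},\ldots,i_{\sigma(l)}} \otimes 1^{\otimes k-1}$. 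Summing over shuffles $\sigma \in \mathrm{Sh}(l, k-1)$ and over pairs $(k, l)$ with $k + l = N+1$ with the prescribed Koszul and shuffle signs, and cancelling the common generator $e_{i_1+\cdots+i_N+N-3}$, yields exactly the second identity in $H^{\otimes N}$.

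The main obstacle is this composition step: one must carefully track the action of $\Delta^l$ on the correct tensor slot, the induced action of shuffle permutations on the factors of $H^{\otimes N}$, and the interplay between Koszul signs of the inputs and shuffle signs of the permutations. This is pure bookkeeping, but because the pseudotensor composition in $\mathcal{M}^*_{gr}(H)$ differs from the ordinary composition of graded multilinear maps by precisely the insertion of the iterated comultiplications, one must resist the temptation to treat it as naive function composition. Once this calculation is in hand, the equivalence follows by comparing coefficients of the distinguished generators on each side.
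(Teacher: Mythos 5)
Your proposal is correct and follows essentially the same route as the paper's proof: evaluate the structure maps on the generators $e_{i_1},\ldots,e_{i_k}$, identify $\beta_k(\beta_l(\cdot),\cdot,\ldots)$ with $\big(\alpha_{i_{\sigma(1)},\ldots,i_{\sigma(l)}}\otimes 1^{\otimes k-1}\big)(\Delta^l\otimes\mathrm{id}^{\otimes k-1})\,\alpha_{i_{\sigma(1)}+\cdots+i_{\sigma(l)}+l-2,\,i_{\sigma(l+1)},\ldots,i_{\sigma(N)}}$ via the pseudotensor composition, and compare coefficients of the rank-one generators. Your write-up is in fact somewhat more explicit than the paper's (which records only the key composition formula and cites the definition), but there is no substantive difference in method.
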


 \begin{proof}
     For any $\sigma \in \mathrm{Sh} (l, k-1)$, note that
     \begin{align*}
         \beta_l (x_{\sigma (1)}, \ldots, x_{\sigma (l)}) = \alpha_{ i_{\sigma (1)}, \ldots, i_{\sigma (l)} } \otimes_H e_{ i_{\sigma (1)} + \cdots + i_{\sigma (l)} + l-2}.
     \end{align*}
     Therefore,
     \begin{align*}
        & \beta_k (     \beta_l ( x_{\sigma (1)}, \ldots, x_{\sigma (l)}), x_{\sigma(l+1)}, \ldots,  x_{\sigma (N)}  ) \\
        & = \big( \alpha_{i_{\sigma (1)}, \ldots, i_{\sigma (l)}} \otimes 1^{\otimes k-1} \big) (\Delta^l \otimes \mathrm{id}^{\otimes k-1}) \alpha_{i_{\sigma (1)} + \cdots + i_{\sigma (l)} + l-2, i_{\sigma (l+1)}, \ldots, i_{\sigma (N)}}.
     \end{align*}
     Hence the result follows by using the definition of an $L_\infty$ $H$-pseudoalgebra.
 \end{proof}

 \begin{remark}
     Let $\mathcal{L} = \oplus_{n \in \mathbb{Z}} H e_n$ be a graded left $H$-module whose each arity is a free $H$-module of rank $1$. Suppose there is a map $\beta \in  \mathrm{Hom}^{0}_{H^{\otimes 2}} (\mathcal{L}^{\boxtimes 2}, H^{\otimes 2} \otimes_H \mathcal{L})$ which is determined by the elements $\alpha_{i, j} \in H^{\otimes 2}$ (for $i, j \in \mathbb{Z}$). Then $(\mathcal{L} , \beta)$ is a graded Lie $H$-pseudoalgebra if and only if 
     \begin{align*}
         \alpha_{i, j} =~& - (-1)^{ij} \alpha_{j, i}, \\
         (\alpha_{i, j} \otimes 1) (\Delta \otimes \mathrm{id}) \alpha_{i+j,k } =~& (1 \otimes \alpha_{j, k}) (\mathrm{id} \otimes \Delta) \alpha_{i, j+k} - (-1)^{ij} (\sigma_{12} \otimes \mathrm{id}) \big(  (1 \otimes \alpha_{i, k}) (\mathrm{id} \otimes \Delta) \alpha_{j, i+k}    \big) = 0.
     \end{align*}
 \end{remark}

\subsection{Strongly homotopy associative $H$-pseudoalgebras} In this subsection, we first introduce the notion of $A_\infty$ $H$-pseudoalgebras as the strongly homotopy version of associative $H$-pseudoalgebras. In the end, we show that a suitable skew-symmetrization of an $A_\infty$ $H$-pseudoalgebra gives rise to an $L_\infty$ $H$-pseudoalgebra.

\begin{definition} (\cite{bakalov-andrea-kac}) Let $\mathcal{C}$ be a pseudotensor category. An {\em associative algebra} in $\mathcal{C}$ is a pair $(A, \mu)$ of an object $A$ with a polylinear map $\mu \in \mathrm{Lin} (\{ A, A \}, A)$ satisfying the associativity $\mu (\mu (\cdot , \cdot), \cdot) = \mu (\cdot, \mu (\cdot, \cdot))$.

Let $H$ be a cocommutative Hopf algebra (although the cocommutativity is not required to define associative $H$-pseudoalgebras). An {\em associative $H$-pseudoalgebra} is an associative algebra in the pseudotensor category $\mathcal{M}^* (H)$. It follows that an associative $H$-pseudoalgebra is a left $H$-module $A$ with a map $\mu \in \mathrm{Hom}_{H^{\otimes 2}} (A \boxtimes A, H^{\otimes 2} \otimes_H A)$, often called the {\em pseudoproduct} denoted by $\mu (a \otimes b) = a * b$, satisfying $(a * b) * c = a * (b * c)$, for all $a, b, c \in A$. Here the compositions are similar to the one given in (\ref{xy-z}) and (\ref{x-yz}).

One can also define graded associative algebras, differential graded associative algebras and $A_\infty$ algebras in a graded pseudotensor category. If we consider the graded pseudotensor category $\mathcal{M}_{gr}^* (H)$ associative to a cocommutative Hopf algebra $H$, an $A_\infty$ algebra in $\mathcal{M}_{gr}^* (H)$ is simply called an $A_\infty$ $H$-pseudoalgebra. More precisely, we have the following.

\begin{definition}
    An {\em $A_\infty$ $H$-pseudoalgebra} is a pair $(\mathcal{A}, \{ \mu_k \}_{k \geq 1})$ consisting of a graded left $H$-module $\mathcal{A} = \oplus_{n \in \mathbb{Z}} \mathcal{A}^n$ equipped with a collection of polylinear maps
    \begin{align*}
        \{ \mu_k \in \mathrm{Hom}_{H^{\otimes k}}^{k-2} ( \mathcal{A}^{\boxtimes k}, H^{\otimes k} \otimes_H \mathcal{A}) \}_{k \geq 1}
    \end{align*}
    subject to satisfy the following higher associativity identities: for each $N \geq 1$ and homogeneous elements $a_1, \ldots, a_N \in \mathcal{A}$,
    \begin{align}\label{higher-asso}
        \sum_{k+l = N+1} \sum_{\lambda = 1}^k (-1)^{  \lambda (l+1) + l (   |a_1| + \cdots + |a_{\lambda -1}|)} \mu_k \big( a_1, \ldots, a_{\lambda -1} , \mu_l (a_\lambda, \ldots, a_{\lambda + l-1}), a_{\lambda + l}, \ldots, a_N \big) = 0.
    \end{align}
\end{definition}

As before, $\mu_k$ is $H^{\otimes k}$-linear simply means that
$\mu_k (h_1 a_1 , \ldots, h_k a_k) = (( h_1 \otimes \cdots \otimes h_k) \otimes_H 1) \mu_k (a_1, \ldots, a_k)$, for all $a_1, \ldots, a_k \in \mathcal{A}$ and $h_1, \ldots, h_k \in H$. Finally, to explicitly describe the higher associativity identities, we will write down explicitly each term under the summation of (\ref{higher-asso}). First, for any fixed $k, l \geq 1$ with $k+l = N+1$, suppose
\begin{align*}
    \mu_l (a_\lambda, \ldots, a_{\lambda + l -1}) = \sum_{i}  (h^1_i \otimes \cdots \otimes h^l_i) \otimes_H e_i \in H^{\otimes l} \otimes_H \mathcal{A}
\end{align*}
and
\begin{align*}
    \mu_k (a_1, \ldots, a_{\lambda -1}, e_i, a_{\lambda + 1}, \ldots, a_N) = \sum_{j} ( h^{\lambda, 1}_{ij} \otimes \cdots \otimes h^{\lambda, k}_{ij}) \otimes_H e_{ij} \in H^{\otimes k} \otimes_H \mathcal{A}.
\end{align*}
Then $\mu_k \big( a_1, \ldots, a_{\lambda -1} , \mu_l (a_\lambda, \ldots, a_{\lambda + l-1}), a_{\lambda + l}, \ldots, a_N \big)$ is the element of $H^{\otimes N} \otimes_H \mathcal{A}$ given by
\begin{align*}
    \sum_{i, j} \big(  h^{\lambda, 1}_{ij} \otimes \cdots \otimes h^{\lambda, \lambda -1}_{ij} \otimes h^1_i h^{\lambda, \lambda}_{ij (1)} \otimes \cdots \otimes h^l_i h^{\lambda, \lambda}_{ij (l)} \otimes h^{\lambda, \lambda+1}_{ij} \otimes \cdots \otimes h^{\lambda, k}_{ij}   \big) \otimes_H e_{ij}.
\end{align*}
\end{definition}

\begin{remark}
   (i) When $H = \mathbf{k}$, an $A_\infty$ $H$-pseudoalgebra is nothing but a classical $A_\infty$ algebra \cite{stas}.

    (ii) Any associative $H$-pseudoalgebra can be regarded as an $A_\infty$ $H$-pseudoalgebra concentrated in arity $0$.
\end{remark}

Let $\mathcal{W} = \oplus_{n \in \mathbb{Z}} \mathcal{W}^n$ be a graded left $H$-module. 
Then the graded space $\oplus_{p \in \mathbb{Z}} \mathrm{Hom}^p_\mathrm{poly} (\mathcal{W}, \mathcal{W})$ carries a graded Lie bracket given by
\begin{align*}
    \llbracket \sum_{k \geq 1} \nu_k , \sum_{l \geq 1} \theta_l   \rrbracket^{\sim} = \sum_{N \geq 1} \sum_{k+l = N+1} \llbracket \nu_k, \theta_l \rrbracket^{\sim} := \sum_{N \geq 1} \sum_{k+l = N+1} \big(  \nu_k \odot \theta_l - (-1)^{pq} \theta_l \odot \nu_k  \big), 
\end{align*}
where
\begin{align*}
    (\eta_k \odot \zeta_l) (w_1, \ldots, w_N) := \sum_{i =1}^k (-1)^{ |w_1| + \cdots + |w_{i-1}|}~ \nu_k \big( w_1, \ldots, w_{i-1}, \theta_l ( w_{i}, \ldots, w_{i+l-1}), w_{i+l}, \ldots, w_{N} \big),
\end{align*}
for $\sum_{k \geq 1} \nu_k \in \mathrm{Hom}^p_\mathrm{poly} (\mathcal{W}, \mathcal{W})$,  $\sum_{l \geq 1} \theta_l \in \mathrm{Hom}^q_\mathrm{poly} (\mathcal{W}, \mathcal{W})$ and $w_1, \ldots, w_N \in \mathcal{W}$. Then similar to Theorem \ref{mc-thm}, here one can prove the following result.

\begin{thm}
    Let $\mathcal{A}= \oplus_{n \in \mathbb{Z}} \mathcal{A}^n$ be a graded left $H$-module. Then there is a one-to-one correspondence between $A_\infty$ $H$-pseudoalgebra structures on $\mathcal{A}$ and Maurer-Cartan elements in the graded Lie algebra $(\oplus_{p \in \mathbb{Z}} \mathrm{Hom}^p_\mathrm{poly} (\mathcal{W}, \mathcal{W}), \llbracket ~, ~ \rrbracket^\sim  )$, where $\mathcal{W} = \mathcal{A} [-1]$.
\end{thm}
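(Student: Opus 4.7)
The plan is to mirror the argument of Theorem \ref{mc-thm}, replacing the graded-symmetric setting with the fully polylinear setting and the shuffle-based composition $\diamond$ with the substitution-based composition $\odot$. The proof factors into three steps: desuspension, sign translation, and the Maurer-Cartan identity.

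First, I would desuspend to normalise all degrees. For each $k \geq 1$, define
\begin{align*}
\nu_k := (-1)^{\frac{k(k-1)}{2}} \, s \circ \mu_k \circ (s^{-1})^{\otimes k},
\end{align*}
where $s : \mathcal{A} \to \mathcal{W} = \mathcal{A}[-1]$ is the degree $+1$ identification. Since $\deg(\mu_k) = k-2$ one obtains $\deg(\nu_k) = -1$, and the $H^{\otimes k}$-linearity of $\mu_k$ transfers to $\nu_k$ because $s$ acts only on the $\mathcal{A}$-factor and commutes with the $H$-action. This yields a bijection between collections $\{\mu_k\}_{k \geq 1}$ of the required degrees and collections $\{\nu_k \in \mathrm{Hom}^{-1}_{H^{\otimes k}}(\mathcal{W}^{\boxtimes k}, H^{\otimes k} \otimes_H \mathcal{W})\}_{k \geq 1}$.

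Second, I would translate the higher associativity identities (\ref{higher-asso}) into identities for the $\nu_k$. Applying $s$ to both sides of (\ref{higher-asso}) and moving $(s^{-1})^{\otimes N}$ past the elements $a_1, \ldots, a_{\lambda-1}$ using the Koszul sign rule, the sign $(-1)^{\lambda(l+1) + l(|a_1|+\cdots+|a_{\lambda-1}|)}$ combines with the prefactors $(-1)^{k(k-1)/2}$ and $(-1)^{l(l-1)/2}$ coming from the definition of $\nu_k$ and $\nu_l$ to collapse into exactly the sign $(-1)^{|w_1|+\cdots+|w_{i-1}|}$ appearing in the definition of $\odot$, with $i = \lambda$ and $w_j = s(a_j)$. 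Writing $N = k+l-1$, one obtains that (\ref{higher-asso}) is equivalent to
\begin{align*}
\sum_{k+l = N+1} \sum_{i=1}^{k} (-1)^{|w_1|+\cdots+|w_{i-1}|} \, \nu_k\bigl(w_1, \ldots, w_{i-1}, \nu_l(w_i, \ldots, w_{i+l-1}), w_{i+l}, \ldots, w_N\bigr) = 0,
\end{align*}
which is exactly $\sum_{k+l = N+1} (\nu_k \odot \nu_l)(w_1, \ldots, w_N) = 0$ for all $N \geq 1$ and all homogeneous $w_1, \ldots, w_N \in \mathcal{W}$.

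Third, observe that for $\nu = \sum_{k \geq 1} \nu_k$ of degree $-1$ one has $(-1)^{(-1)(-1)} = -1$, so the graded Lie bracket becomes
\begin{align*}
\llbracket \nu_k, \nu_l \rrbracket^{\sim} = \nu_k \odot \nu_l - (-1)^{(-1)(-1)} \nu_l \odot \nu_k = \nu_k \odot \nu_l + \nu_l \odot \nu_k,
\end{align*}
whence $\llbracket \nu, \nu \rrbracket^{\sim} = 2 \sum_{N \geq 1} \sum_{k+l = N+1} \nu_k \odot \nu_l$. Combined with Step 2, this shows that $\nu$ is a Maurer-Cartan element of $(\oplus_{p \in \mathbb{Z}} \mathrm{Hom}^p_{\mathrm{poly}}(\mathcal{W}, \mathcal{W}), \llbracket \cdot, \cdot \rrbracket^{\sim})$ if and only if $(\mathcal{A}, \{\mu_k\}_{k \geq 1})$ is an $A_\infty$ $H$-pseudoalgebra, which is the desired bijection.

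The main obstacle is the sign bookkeeping in Step 2. While the individual manipulations are routine Koszul-sign calculations familiar from the classical $A_\infty$ setting, one must verify that the $H^{\otimes k}$-valued composition in $\mathcal{M}^*_{gr}(H)$ does not introduce additional signs beyond those already present in the graded vector space case. This is true because the symmetric group acts on the $H^{\otimes k}$-factor purely by permutation without signs, and the $H$-linearity of each $\mu_k$ commutes with the shift $s$; hence the pseudo-composition contributes no extra signs and the proof reduces to the classical sign computation for ordinary $A_\infty$ algebras.
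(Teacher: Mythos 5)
Your proposal is correct and follows exactly the route the paper intends: the paper gives no separate argument here, stating only that the result is proved ``similar to Theorem \ref{mc-thm},'' and your three steps (desuspension $\nu_k = (-1)^{k(k-1)/2} s\circ\mu_k\circ(s^{-1})^{\otimes k}$, translation of the higher associativity identities into $\sum_{k+l=N+1}\nu_k\odot\nu_l=0$, and the observation that for degree $-1$ elements $\llbracket \nu,\nu\rrbracket^\sim = 2\sum_{N\geq 1}\sum_{k+l=N+1}\nu_k\odot\nu_l$) are precisely the $A_\infty$ analogue of that proof. You in fact supply more detail than the paper does, in particular the sign bookkeeping in Step 2 which the paper leaves implicit.
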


\begin{remark}
Note that the graded Lie algebra $(\oplus_{p \in \mathbb{Z}} \mathrm{Hom}^p_\mathrm{poly} (\mathcal{W}, \mathcal{W}), \llbracket ~, ~ \rrbracket^\sim  )$ constructed above has a close connection with the graded Lie algebra $(\oplus_{p \in \mathbb{Z}} \mathrm{symHom}^p_\mathrm{poly} (\mathcal{W}, \mathcal{W}), \llbracket ~, ~ \rrbracket  )$ defined in Section \ref{sec3}. More precisely, for any $p \in \mathbb{Z}$, the map $( - )^{\mathrm{sym}} : \mathrm{Hom}^p_\mathrm{poly} (\mathcal{W}, \mathcal{W}) \rightarrow \mathrm{symHom}^p_\mathrm{poly} (\mathcal{W}, \mathcal{W})$ defined by 
\begin{align*}
(\sum_{k \geq 1} \nu_k)^{\mathrm{sym}} =~& \sum_{k \geq 1} (\nu_k)^{\mathrm{sym}}, \text{ where} \\
     (\nu_k)^{\mathrm{sym}} (w_1, \ldots, w_k) :=~& \sum_{\sigma \in S(k)} \epsilon (\sigma) (\sigma \otimes_H 1) \nu_k (w_{\sigma (1)}, \ldots, w_{\sigma (k)})
\end{align*}
defines a morphism from $(\oplus_{p \in \mathbb{Z}} \mathrm{Hom}^p_\mathrm{poly} (\mathcal{W}, \mathcal{W}), \llbracket ~, ~ \rrbracket^\sim  )$ to $(\oplus_{p \in \mathbb{Z}} \mathrm{symHom}^p_\mathrm{poly} (\mathcal{W}, \mathcal{W}), \llbracket ~, ~ \rrbracket  )$.
\end{remark}

In \cite{bakalov-andrea-kac} the authors showed that an associative $H$-pseudoalgebra gives rise to a Lie $H$-pseudoalgebra if we take the skew-symmetrization of the associative pseudoproduct. More precisely, let $(A, \mu)$ be an associative $H$-pseudoalgebra, where we use the notation $\mu (a \otimes b) = a * b$, for $a, b \in A$. Then $(A, \beta)$ is a Lie $H$-pseudoalgebra, where the skew-symmetric pseudobracket $\beta$ is given by
\begin{align*}
    \beta (a \otimes b) = [a * b] := a * b - (\sigma_{12} \otimes_H 1) (b * a), \text{ for } a, b \in A.
\end{align*}

In the following, we will generalize this result in the strongly homotopy context.

\begin{thm}
    Let $(\mathcal{A}, \{ \mu_k \}_{k \geq 1})$ be an $A_\infty$ $H$-pseudoalgebra. Then $(\mathcal{A}, \{ \beta_k \}_{k \geq 1})$ is an $L_\infty$ $H$-pseudoalgebra, where
    \begin{align}\label{beta-map}
        \beta_k (a_1, \ldots, a_k) = \sum_{\sigma \in S_k} (-1)^\sigma \epsilon(\sigma) (\sigma \otimes_H 1) (\mu_k) (a_{\sigma^{-1} (1)}, \ldots, a_{\sigma^{-1} (k)} ). 
    \end{align}
\end{thm}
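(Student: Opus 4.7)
The plan is to exploit the Maurer-Cartan reformulation twice and invoke the graded Lie algebra morphism $(-)^{\mathrm{sym}}$ introduced in the preceding remark, rather than grinding out the skew-symmetrized higher Jacobi identities from the higher associativity identities directly. Set $\mathcal{W} = \mathcal{A}[-1]$ and, for each $k \geq 1$, let $\nu_k = (-1)^{k(k-1)/2}\, s \circ \mu_k \circ (s^{-1})^{\otimes k}$ be the degree $-1$ shifted maps, and $\eta_k = (-1)^{k(k-1)/2}\, s \circ \beta_k \circ (s^{-1})^{\otimes k}$ analogously. By the $A_\infty$ analogue of Theorem~\ref{mc-thm}, the element $\nu = \sum_{k \geq 1} \nu_k$ is a Maurer-Cartan element in $(\oplus_{p \in \mathbb{Z}} \mathrm{Hom}^p_{\mathrm{poly}}(\mathcal{W},\mathcal{W}), \llbracket \,,\, \rrbracket^\sim)$.

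Next, I would apply the symmetrization map $(-)^{\mathrm{sym}}$ from the remark preceding the theorem; since it is a morphism of graded Lie algebras into $(\oplus_{p} \mathrm{symHom}^p_{\mathrm{poly}}(\mathcal{W},\mathcal{W}), \llbracket\,,\,\rrbracket)$, the image $\nu^{\mathrm{sym}}$ remains Maurer-Cartan. By Theorem~\ref{mc-thm}, $\nu^{\mathrm{sym}}$ corresponds to an $L_\infty$ $H$-pseudoalgebra structure on $\mathcal{A}$. It then remains to identify the associated unshifted structure maps with the $\beta_k$ of the statement: unwinding the definitions,
\begin{align*}
(\nu_k)^{\mathrm{sym}}(w_1,\ldots,w_k) = \sum_{\sigma \in S_k} \epsilon(\sigma)(\sigma \otimes_H 1)\, \nu_k(w_{\sigma(1)},\ldots,w_{\sigma(k)}),
\end{align*}
and inserting the shift $s$ picks up exactly the sign $(-1)^\sigma$ via the Koszul rule, reproducing the formula (\ref{beta-map}) for $\beta_k$. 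In particular, graded skew-symmetry of each $\beta_k$ is immediate from its definition as a sum over all permutations with the signs $(-1)^\sigma \epsilon(\sigma)$.

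The main obstacle is justifying that $(-)^{\mathrm{sym}}$ is indeed a morphism of graded Lie algebras, which is stated but not proved in the remark. To verify it, I would expand $\llbracket (\sum \nu_k)^{\mathrm{sym}}, (\sum \theta_l)^{\mathrm{sym}} \rrbracket$ using the bracket (\ref{gl-bracket}) and compare with $(\llbracket \sum \nu_k, \sum \theta_l \rrbracket^\sim)^{\mathrm{sym}}$. The key combinatorial identity needed is that every shuffle $\sigma \in \mathrm{Sh}(l, k-1)$, together with the permutations of the remaining $k-1$ arguments coming from $\mathrm{sym}$, precisely parametrizes the insertions of $\theta_l$ into $\nu_k$ at arbitrary positions together with permutations of all $N = k+l-1$ inputs, with matching Koszul signs $\epsilon(\sigma)$ and signature signs $(-1)^\sigma$. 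This is essentially the standard identity that skew-symmetrization of the Gerstenhaber-type bracket produces the Nijenhuis-Richardson bracket; the only new ingredient here is the equivariance of the $H^{\otimes N}$-action under permutations, which follows from cocommutativity of $H$.

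Once this morphism property is established, the theorem follows as an immediate corollary: $\nu$ being Maurer-Cartan implies $\nu^{\mathrm{sym}}$ is Maurer-Cartan, and Proposition~\ref{prop-shifted} together with Theorem~\ref{mc-thm} translates this back into the higher Jacobi identities (\ref{higher-j}) for the maps $\beta_k$ defined by (\ref{beta-map}). As a remark, a purely direct proof is also possible by grouping the terms of $\sum_{k+l=N+1}\sum_{\sigma \in \mathrm{Sh}(l,k-1)}$ in the higher Jacobi identity into orbits under $S_N$ and matching each such orbit with a term in the sum $\sum_{k+l=N+1}\sum_\lambda$ appearing in the higher associativity identities, but this route is considerably more laborious and less illuminating than the Maurer-Cartan argument above.
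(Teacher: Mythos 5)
Your proposal is correct and takes essentially the same route as the paper: pass to $\mathcal{W}=\mathcal{A}[-1]$, read the $A_\infty$ structure as a Maurer--Cartan element $\nu=\sum\nu_k$ for $\llbracket\,,\,\rrbracket^\sim$, push it through the graded Lie algebra morphism $(-)^{\mathrm{sym}}$ using the identification $(\nu_k)^{\mathrm{sym}}=\eta_k$, and conclude via Theorem \ref{mc-thm} that the $\beta_k$ of (\ref{beta-map}) satisfy the higher Jacobi identities. The only addition is your sketch verifying that $(-)^{\mathrm{sym}}$ is a morphism of graded Lie algebras, a point the paper takes from the preceding remark without proof.
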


\begin{proof}
    Let $\mathcal{W} = \mathcal{A}[-1]$. Since $(\mathcal{A}, \{ \mu_k \}_{k \geq 1})$ is ab $A_\infty$ $H$-pseudoalgebra, there is a Maurer-Cartan element in the graded Lie algebra $(\oplus_{p \in \mathbb{Z}} \mathrm{Hom}^p_\mathrm{poly} (\mathcal{W}, \mathcal{W}), \llbracket ~, ~ \rrbracket^\sim)$. The Maurer-Cartan element is precisely given by $\nu = \sum_{k \geq 1} \nu_k \in \mathrm{Hom}^{-1}_\mathrm{poly} (\mathcal{W}, \mathcal{W})$, where
    \begin{align*}
        \nu_k = (-1)^{\frac{k (k-1)}{2}}~ s \circ \mu_k \circ (s^{-1})^{\otimes k}.
    \end{align*}
    Next, consider the collection of skew-symmetric polylinear maps $\{ \beta_k \}_{k \geq 1}$ as defined in (\ref{beta-map}). If $\{ \eta_k \}_{k \geq 1}$ are the symmetric polylinear maps in $\mathrm{symHom}^{-1}_{\mathrm{poly}} (\mathcal{W}, \mathcal{W})$ defined by $\eta_k = (-1)^{\frac{k (k-1)}{2}} ~ s \circ \beta_k \circ (s^{-1})^{\otimes k}$, then by a simple observation (see also \cite{wu}) shows that
    \begin{align*}
        (\nu_k)^\mathrm{sym} = \eta_k, \text{ for all } k \geq 1.
    \end{align*}
    Hence $(\sum_{k \geq 1} \nu_k)^{\mathrm{sym}} = \sum_{k \geq 1} (\nu_k)^{\mathrm{sym}} = \sum_{k \geq 1} eta_k$. Therefore,
    \begin{align*}
        \llbracket \sum_{k \geq 1} \eta_k, \sum_{k \geq 1} \eta_k \rrbracket = \llbracket  (\sum_{k \geq 1} \nu_k)^{\mathrm{sym}}, (\sum_{k \geq 1} \nu_k)^{\mathrm{sym}}   \rrbracket 
        = \big(  \llbracket \sum_{k \geq 1} \nu_k, \sum_{k \geq 1} \nu_k \rrbracket^\sim \big) =  0. 
    \end{align*}
    The result now follows as a consequence of Theorem \ref{mc-thm}.
\end{proof}

\section{Skeletal and strict homotopy Lie {\em H}-pseudoalgebras}\label{sec5}
Our aim in this section is to discuss and study some particular classes of $L_\infty$ $H$-pseudoalgebras. We first start with those $L_\infty$  $H$-pseudoalgebras whose underlying graded left $H$-module is concentrated in arities $0$ and $1$. We call them $2$-term $L_\infty$  $H$-pseudoalgebras. Next, we consider skeletal and strict $L_\infty$  $H$-pseudoalgebras that are special classes of $2$-term $L_\infty$ $H$-pseudoalgebras. We show that skeletal $L_\infty$ $H$-pseudoalgebras correspond to third cocycles of (ordinary) Lie $H$-pseudoalgebras. Finally, we introduce crossed modules of Lie $H$-pseudoalgebras and show that they correspond to strict $L_\infty$ $H$-pseudoalgebras. Our results in this section are motivated by the results developed by Baez and Crans \cite{baez-crans} for classical $L_\infty$ algebras.

We have already mentioned that $2$-term $L_\infty$ $H$-pseudoalgebras are those $L_\infty$ $H$-pseudoalgebras whose underlying graded left $H$-module is concentrated in arities $0$ and $1$. Hence by assuming $\mathcal{L} = \mathcal{L}_0 \oplus \mathcal{L}_1$ in Definition \ref{shlieH}, we get the following.

\begin{definition}\label{defn2term}
A {\em $2$-term $L_\infty$ $H$-pseudoalgebra} is a triple $(\mathcal{L}_1 \xrightarrow{\beta_1} \mathcal{L}_0, \beta_2, \beta_3)$ consisting of a $2$-term chain complex $\mathcal{L}_1 \xrightarrow{\beta_1} \mathcal{L}_0$ of left $H$-modules, a skew-symmetric polylinear map $\beta_2 \in \mathrm{Hom}_{H^{\otimes 2}} (\mathcal{L}_i \boxtimes \mathcal{L}_j, H^{\otimes 2} \otimes_H \mathcal{L}_{i+j})$ and another skew-symmetric polylinear map $\beta_3 \in \mathrm{Hom}_{H^{\otimes 3}} (\mathcal{L}_0 \boxtimes \mathcal{L}_0 \boxtimes \mathcal{L}_0, H^{\otimes 3} \otimes_H \mathcal{L}_{1})$ subject to satisfy the following conditions:

\medskip

(i) $\beta_1 (\beta_2 (x, u)) = \beta_2 (x, \beta_1 (u)),$

\medskip

(ii) $\beta_2 (\beta_1 (u), v) = \beta_2 (u, \beta_1 (v))$, 

\medskip

(iii) $\beta_2 (   \beta_2 (x, y), z) - \beta_2 (x, \beta_2 (y, z)) + \beta_2 (y, \beta_2 (x, z)) =  - \beta_1 (\beta_3 (x, y, z)),$

\medskip

(iv) $\beta_2 (   \beta_2 (x, y), v) - \beta_2 (x, \beta_2 (y, v)) + \beta_2 (y, \beta_2 (x, v)) = - \beta_3 (x, y, \beta_1 (v)),$

\medskip

(v) (the Jacobiator identity) 
\begin{align*}
    \beta_2 ( x, & \beta_3 (y, z, w)) - \beta_2 (y, \beta_3 (x, z, w)) + \beta_2 (z, \beta_3 (x, y, w)) - \beta_2 (w, \beta_3 (x, y, z)) \\
   & - \beta_3 ( \beta_2 (x, y), z, w) + \beta_3 ( \beta_2 (x, z), y, w) - \beta_3 ( \beta_2 (x, w), y, z) \\
   & - \beta_3 ( \beta_2 (y, z), x, w) + \beta_3 (\beta_2 (y, w), x, z) - \beta_3 (\beta_2 (z, w), x, y) = 0,
\end{align*}
for all $x, y, z, w \in \mathcal{L}_0$ and $u, v \in \mathcal{L}_1$.
\end{definition}

Let $\mathcal{L} = (\mathcal{L}_1 \xrightarrow{\beta_1} \mathcal{L}_0, \beta_2, \beta_3)$ and $\mathcal{L}' = (\mathcal{L}'_1 \xrightarrow{\beta'_1} \mathcal{L}'_0, \beta'_2, \beta'_3)$ be $2$-term $L_\infty$ $H$-pseudoalgebras. A {\em morphism} between them is given by a triple $f = (f_0, f_1, f_2)$ consisting of $H$-linear maps $f_0: \mathcal{L}_0 \rightarrow \mathcal{L}_0'$ and $f_1: \mathcal{L}_1 \rightarrow \mathcal{L}_1'$, and a polylinear map $f_2 \in \mathrm{Hom}_{H^{\otimes 2}} (\mathcal{L}_0 \boxtimes \mathcal{L}_0, H^{\otimes 2} \otimes_H \mathcal{L}_1')$ satisfying
\begin{align*}
    \beta_1 \circ f_0 =~& f_0 \circ \beta_1',\\
    ( \mathrm{id}_{H^{\otimes 2}} \otimes_H f_0) (\beta_2 (x, y)) - \beta_2' (f_0 (x), f_0 (y)) =~& ( \mathrm{id}_{H^{\otimes 2}} \otimes_H \beta_1') (f_2 (x, y)), \\
    ( \mathrm{id}_{H^{\otimes 2}} \otimes_H f_1) (\beta_2 (x, u)) - \beta_2' (f_0 (x), f_1 (u)) =~&  f_2 (x, \beta_1 (u)), \\
    \beta_2' (f_2 (x, y), f_0 (z)) - \beta_2' (f_0 (x), f_2 (y, z)) ~+~& \beta_2' (f_0 (y), f_2 (x, z)) 
     + f_2 ( \beta_2 (x, y), z) \\
    - f_2 (x, \beta_2 (y, z)) + f_2 (y, \beta_2 (x, z)) =~& \beta_3' (f_0 (x), f_0 (y), f_0 (z)) - ( \mathrm{id}_{H^{\otimes 3}} \otimes_H f_1) (\beta_3 (x, y, z)),
\end{align*}
for all $x, y, z \in \mathcal{L}_0$ and $u \in \mathcal{L}_1$. We often denote a morphism as above by $(f_0, f_1, f_2) : \mathcal{L} \rightsquigarrow \mathcal{L}'$.

Let $\mathcal{L} = (\mathcal{L}_1 \xrightarrow{\beta_1} \mathcal{L}_0, \beta_2, \beta_3)$, $\mathcal{L}' = (\mathcal{L}'_1 \xrightarrow{\beta'_1} \mathcal{L}'_0, \beta'_2, \beta'_3)$ and $\mathcal{L}'' = (\mathcal{L}''_1 \xrightarrow{\beta''_1} \mathcal{L}''_0, \beta''_2, \beta''_3)$ be $2$-term $L_\infty$ $H$-pseudoalgebras. Suppose $f = (f_0, f_1, f_2) : \mathcal{L} \rightsquigarrow \mathcal{L}'$ and $g = (g_0, g_1, g_2) : \mathcal{L}' \rightsquigarrow \mathcal{L}''$ are morphisms between $2$-term $L_\infty$ $H$-pseudoalgebras. Then their composition $g \circ f : \mathcal{L} \rightsquigarrow \mathcal{L}''$ is given by 
\begin{align*}
g \circ f  = (g_0 \circ f_0, g_1 \circ f_1, g_2 \circ (f_0 \otimes f_0) + (\mathrm{id}_{H^{\otimes 2}} \otimes g_1) \circ f_2 ).
\end{align*}

With all the above definitions and notations, we have the following result.

\begin{proposition}
    The collection of all $2$-term $L_\infty$ $H$-pseudoalgebras and morphisms between them forms a category. (We denote this category by {\bf 2shLie}$H$).
\end{proposition}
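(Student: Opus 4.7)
The plan is to verify the three defining properties of a category: existence of identity morphisms, closure under composition, and associativity of composition. For any $2$-term $L_\infty$ $H$-pseudoalgebra $\mathcal{L} = (\mathcal{L}_1 \xrightarrow{\beta_1} \mathcal{L}_0, \beta_2, \beta_3)$, I would define the identity morphism $\mathrm{id}_\mathcal{L} = (\mathrm{id}_{\mathcal{L}_0}, \mathrm{id}_{\mathcal{L}_1}, 0) : \mathcal{L} \rightsquigarrow \mathcal{L}$. Checking that this satisfies the four axioms for a morphism is immediate: the first three axioms reduce to tautologies since $f_2 = 0$, and the fourth Jacobiator-type axiom collapses to $\beta_3 = \beta_3$. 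One then verifies that $\mathrm{id}_{\mathcal{L}'} \circ f = f = f \circ \mathrm{id}_{\mathcal{L}}$ for any morphism $f : \mathcal{L} \rightsquigarrow \mathcal{L}'$ by substituting into the composition formula.

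Next, I would verify that the composition $g \circ f = (g_0 \circ f_0, g_1 \circ f_1, g_2 \circ (f_0 \otimes f_0) + (\mathrm{id}_{H^{\otimes 2}} \otimes g_1) \circ f_2)$ satisfies the four morphism axioms. The first axiom (commutativity with $\beta_1$) follows by pasting together the corresponding diagrams for $f$ and $g$. The second and third axioms (compatibility with $\beta_2$ up to $f_2$) are proved by a direct computation: one inserts $\beta'_2(f_0(x), f_0(y))$ using the axiom for $f$ (which introduces a term with $f_2$), then applies $g_0$ and uses the axiom for $g$ (which introduces a term with $g_2$), and finally checks that the error terms assemble precisely into $(\mathrm{id}_{H^{\otimes 2}} \otimes_H \beta_1'')((g \circ f)_2(x,y))$ using the definition of $(g \circ f)_2$.

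The main obstacle is the fourth axiom, the coherence identity involving $\beta_3$, because here the composition $(g \circ f)_2$ contributes two families of terms, one from $g_2 \circ (f_0 \otimes f_0)$ and one from $(\mathrm{id} \otimes g_1) \circ f_2$, each of which has to be paired correctly with $\beta_2', \beta_2''$ through the axioms for $f$ and $g$. The strategy I would use is to start from the left-hand side of the fourth axiom for $g \circ f$ and repeatedly apply the fourth axiom for $f$ (to process $f_0$-arguments feeding into $g_2$) and the corresponding $\beta_2'$-compatibility axioms for both $f$ and $g$ (to exchange $f_0$ with $\beta_2'$ and $g_0$ with $\beta_2''$). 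After systematic rewriting, the $f$-contributions involving $\beta_3$ on $\mathcal{L}$ will telescope with the $g$-contributions involving $\beta_3'$ on $\mathcal{L}'$, and what remains is exactly the $\beta_3''$-term evaluated at $(g_0 f_0(x), g_0 f_0(y), g_0 f_0(z))$ minus $(\mathrm{id}_{H^{\otimes 3}} \otimes_H g_1 f_1)(\beta_3(x,y,z))$, as required. The bookkeeping is substantial but entirely analogous to the corresponding verification for $2$-term $L_\infty$ algebras in Baez--Crans \cite{baez-crans}; the only subtlety here is tracking the $H^{\otimes n}$-tensor factors through the pseudotensor compositions.

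Finally, associativity $(h \circ g) \circ f = h \circ (g \circ f)$ is verified componentwise: the $0$-th and $1$-st components are associativity of composition of $H$-linear maps, and the $2$-nd component expands on both sides to the same sum $h_2 \circ (g_0 f_0 \otimes g_0 f_0) + (\mathrm{id} \otimes h_1) \circ g_2 \circ (f_0 \otimes f_0) + (\mathrm{id} \otimes h_1 g_1) \circ f_2$ after using $H^{\otimes 2}$-linearity to move $(\mathrm{id} \otimes h_1)$ through compositions. This establishes the category axioms and completes the proof.
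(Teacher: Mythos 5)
Your proposal is correct and is exactly the straightforward verification (identities, closure of composition under the four morphism axioms — with the fourth, Jacobiator-compatibility axiom being the only laborious step — and componentwise associativity) that the paper asserts without writing out, in the spirit of the analogous Baez–Crans check. The key computational points you identify, namely using the chain-map condition $g_0\circ\beta_1'=\beta_1''\circ g_1$ to assemble the error terms into $(\mathrm{id}_{H^{\otimes 2}}\otimes_H\beta_1'')((g\circ f)_2)$ and the expansion of the $2$-component for associativity, are precisely what is needed, so there is no gap.
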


\medskip

In the following, we consider skeletal and strict $L_\infty$ $H$-pseudoalgebras and provide their characterizations.

\begin{definition}
    Let $(\mathcal{L}_1 \xrightarrow{\beta_1} \mathcal{L}_0, \beta_2, \beta_3)$ be a $2$-term $L_\infty$ $H$-pseudoalgebra. It is said to be 

    (i) {\em skeletal} if $\beta_1 = 0$,

    (ii) {\em strict} if $\beta_3  = 0$.
\end{definition}

Let $(\mathcal{L}_1 \xrightarrow{ 0} \mathcal{L}_1, \beta_2, \beta_3)$ be a skeletal $L_\infty$ $H$-pseudoalgebra. It follows from condition (iii) of the Definition \ref{defn2term} that the left $H$-module $\mathcal{L}_0$ with the skew-symmetric polylinear map $\beta_2 \in \mathrm{Hom}_{H^{\otimes 2}} (\mathcal{L}_0 \boxtimes \mathcal{L}_0, H^{\otimes 2} \otimes_H \mathcal{L}_0)$ is a Lie $H$-pseudoalgebra. Further, it follows from the condition (iv) of the same definition that the left $H$-module $\mathcal{L}_1$ equipped with the polylinear map $\beta_2 \in \mathrm{Hom}_{H^{\otimes 2}} (\mathcal{L}_0 \boxtimes \mathcal{L}_1, H^{\otimes 2} \otimes_H \mathcal{L}_1)$ is a representation of the Lie $H$-pseudoalgebra $(\mathcal{L}_0, \beta_2)$. Finally, the condition (v) is equivalent to $(\delta \beta_3) (x, y, z, w) = 0$, where $\delta$ is the coboundary map of the Lie $H$-pseudoalgebra $(\mathcal{L}_0, \beta_0)$ with coefficients in the representation $(\mathcal{L}_1, \beta_2)$. In other words, $\beta_3 \in C^3_{\text{Lie-}H} (\mathcal{L}_0, \mathcal{L}_1)$ is a $3$-cocycle.

\begin{proposition}\label{skeletal-thm}
    There is a one-to-one correspondence between skeletal $L_\infty$ $H$-pseudoalgebras and triples of the form $(L, M, \theta)$ in which $L$ is a Lie $H$-pseudoalgebra, $M$ is a representation and $\theta$ is a $3$-cocycle.
\end{proposition}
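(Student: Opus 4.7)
The plan is to unfold Definition \ref{defn2term} under the skeletal assumption $\beta_1 = 0$ and check that each remaining axiom matches a piece of the triple $(L, M, \theta)$. With $\beta_1 = 0$, conditions (i) and (ii) become tautologies ($0 = 0$). Condition (iii) then reduces to
$$\beta_2(\beta_2(x,y), z) = \beta_2(x, \beta_2(y,z)) - \beta_2(y, \beta_2(x,z)),$$
for $x, y, z \in \mathcal{L}_0$, which is exactly the Jacobi identity (\ref{jac-iden}) for the restriction of $\beta_2$ to $\mathcal{L}_0 \boxtimes \mathcal{L}_0$. Thus $L := (\mathcal{L}_0, \beta_2|_{\mathcal{L}_0 \boxtimes \mathcal{L}_0})$ is a Lie $H$-pseudoalgebra. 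Condition (iv) collapses similarly to
$$\beta_2(\beta_2(x,y), v) = \beta_2(x, \beta_2(y,v)) - \beta_2(y, \beta_2(x,v)),$$
for $x, y \in \mathcal{L}_0$ and $v \in \mathcal{L}_1$, which is precisely the representation axiom (\ref{rep-iden}) for $M := \mathcal{L}_1$ as a representation of $L$ via the restriction of $\beta_2$ to $\mathcal{L}_0 \boxtimes \mathcal{L}_1$.

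The main technical step is matching the Jacobiator identity (v) with the $3$-cocycle condition $\delta \beta_3 = 0$ in $C^3_{\text{Lie-}H}(L, M)$. I would expand $(\delta \beta_3)(x, y, z, w)$ using the coboundary formula of Section \ref{sec2}: four action-type terms $(-1)^{i+1}(\sigma_{1 \to i} \otimes_H 1)\, x_i * \beta_3(\ldots)$ and six bracket-type terms $(-1)^{i+j}(\sigma_{\substack{1 \to i\\ 2 \to j}} \otimes_H 1)\, \beta_3([x_i * x_j], \ldots)$. Writing $x_i * \beta_3(\ldots) = \beta_2(x_i, \beta_3(\ldots))$ and using the skew-symmetry of $\beta_3$ to reorder its arguments, the ten terms of $(\delta \beta_3)(x,y,z,w)$ should line up exactly with the ten terms of the Jacobiator, including signs.

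For the converse, given a triple $(L, M, \theta)$, I would set $\mathcal{L}_0 = L$, $\mathcal{L}_1 = M$, $\beta_1 = 0$, and define $\beta_2$ by the pseudobracket of $L$ on $\mathcal{L}_0 \boxtimes \mathcal{L}_0$, by the action map on $\mathcal{L}_0 \boxtimes \mathcal{L}_1$ (extended by skew-symmetry to $\mathcal{L}_1 \boxtimes \mathcal{L}_0$), and by zero on $\mathcal{L}_1 \boxtimes \mathcal{L}_1$ (as the target $\mathcal{L}_2$ is trivial), and put $\beta_3 = \theta$. The analysis above then shows that all five axioms of Definition \ref{defn2term} are satisfied, and the two constructions are visibly mutually inverse.

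The main obstacle I anticipate is the careful bookkeeping of the permutation operators $\sigma_{1 \to i}$ and $\sigma_{\substack{1 \to i\\ 2 \to j}}$ acting on the $H^{\otimes 4}$ factor when matching (v) with $\delta \beta_3 = 0$. The Jacobiator in Definition \ref{defn2term}(v) is written without explicit permutations, so each composite $\beta_2(\cdot, \beta_3(\cdot, \cdot, \cdot))$ and $\beta_3(\beta_2(\cdot, \cdot), \cdot, \cdot)$ must be interpreted as an element of $H^{\otimes 4} \otimes_H \mathcal{L}_1$ according to the composition rule in $\mathcal{M}^*_{gr}(H)$ — exactly as done explicitly in (\ref{xy-z}) and (\ref{x-yz}) for two-step iterated pseudobrackets — and one must verify that both sides of the comparison land in the same $H^{\otimes 4}$-component after applying the appropriate permutations.
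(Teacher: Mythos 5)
Your proposal is correct and follows essentially the same route as the paper: under $\beta_1=0$ you identify condition (iii) with the Jacobi identity for $(\mathcal{L}_0,\beta_2)$, condition (iv) with the representation axiom for $\mathcal{L}_1$, and condition (v) with the $3$-cocycle condition $\delta\beta_3=0$, then build the inverse correspondence by taking $\beta_2$ from the pseudobracket and action map with $\beta_3=\theta$. The paper's proof is the same construction, stated with the same (brief) level of detail on the ten-term comparison between the Jacobiator identity and the coboundary formula.
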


\begin{proof}
Let $(\mathcal{L}_1 \xrightarrow{ 0} \mathcal{L}_1, \beta_2, \beta_3)$ be a skeletal $L_\infty$ $H$-pseudoalgebra. Then $\big(  (\mathcal{L}_0, \beta_2) ,  (\mathcal{L}_1, \beta_2) , \beta_3 \big)$ is the required triple.

Conversely, let $( L, M, \theta)$ be a triple in which $L = (L, \beta)$ is a Lie $H$-pseudoalgebra, $M = (M, \gamma)$ is a representation and $\theta$ is a $3$-cocycle. Then $(M \xrightarrow{0} L, \beta_2, \beta_3)$ is a skeletal $L_\infty$ $H$-pseudoalgebra, where 
\begin{align*}
    \beta_2 (x, y) = \beta (x \otimes y), ~~~ \beta_2 (x, u) = - \beta_2 (u, x) = \gamma (x, u) ~~~ \text{ and } ~~~ \beta_3  = \theta, \text{ for } x, y \in L, u \in M.
\end{align*}
The above two correspondences are inverses to each other.
\end{proof}

The above result has generalizations for higher cocycles. Let $n > 2$ be a fixed natural number and $(\mathcal{L}, \{ \beta_k \}_{k \geq 1})$ be an $L_\infty$ $H$-pseudoalgebra whose underlying graded left $H$-module $\mathcal{L}$ is concentrated in arities $0$ and $n-1$. That is, $\mathcal{L}$ and the map $\beta_1$ (which is automatically trivial in this case) can be realized as the chain complex $\mathcal{L}_{n-1} \rightarrow 0 \rightarrow \cdots \rightarrow 0 \rightarrow \mathcal{L}_0$. Further, for the degree reason, it can be easily observed that the only non-trivial operations in $\{ \beta_k \}_{k \geq 1}$ are given by
\begin{align*}
    \beta_2 \in \mathrm{Hom}_{H^{\otimes 2}} (\mathcal{L}_0 \boxtimes L_0,& H^{\otimes 2} \otimes_H \mathcal{L}_0), \quad \beta_2 \in \mathrm{Hom}_{H^{\otimes 2}} (\mathcal{L}_0 \boxtimes L_{n-1}, H^{\otimes 2} \otimes_H \mathcal{L}_{n-1}) ~~ \text{ and } \\
    & \beta_{n+1} \in \mathrm{Hom}_{H^{\otimes n+1}} (\mathcal{L}_0^{\boxtimes n+1}, H^{\otimes n+1} \otimes_H \mathcal{L}_{n-1}).
\end{align*}
Then from the higher Jacobi identities (for $N = 3$), it follows that the skew-symmetric polylinear map $\beta_2 \in \mathrm{Hom}_{H^{\otimes 2}} (\mathcal{L}_0 \boxtimes L_0, H^{\otimes 2} \otimes_H \mathcal{L}_0)$ makes the left $H$-module $\mathcal{L}_0$ into a Lie $H$-pseudoalgebra and the polylinear map $\beta_2 \in \mathrm{Hom}_{H^{\otimes 2}} (\mathcal{L}_0 \boxtimes L_{n-1}, H^{\otimes 2} \otimes_H \mathcal{L}_{n-1})$ makes the pair $(\mathcal{L}_{n-1}, \beta_2)$ into a representation of the Lie $H$-pseudoalgebra $(\mathcal{L}_0, \beta_2)$. Further, the higher Jacobi identity for $N = n+2$ is equivalent to $(\delta \beta_{n+1}) (x_1, \ldots, x_{n+2}) = 0$, where $\delta$ is the coboundary operator of the Lie $H$-pseudoalgebra $(\mathcal{L}_0, \beta_2)$ with coefficients in the representation $(\mathcal{L}_{n-1}, \beta_2)$. We have the following result whose proof is similar to the proof of Proposition \ref{skeletal-thm}.

\begin{proposition}
    There is a one-to-one correspondence between $L_\infty$ $H$-pseudoalgebras whose underlying graded left $H$-module is concentrated in arities $0$ and $n-1$ (for $n > 2$), and triples of the form $(L, M, \theta)$ in which $L$ is a Lie $H$-pseudoalgebra, $M$ is a representation and $\theta$ is a $(n+1)$-cocycle.
\end{proposition}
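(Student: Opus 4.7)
The plan is to mirror the argument of Proposition \ref{skeletal-thm} but now using the $(n+1)$-ary bracket $\beta_{n+1}$ in place of $\beta_3$, with the main preliminary work being a degree-counting argument that isolates exactly three non-trivial structure maps. First I would examine, for each $k \geq 1$, which restrictions of $\beta_k$ to summands $\mathcal{L}_{i_1} \boxtimes \cdots \boxtimes \mathcal{L}_{i_k}$ can be non-zero, given that $\mathcal{L} = \mathcal{L}_0 \oplus \mathcal{L}_{n-1}$ and $\deg(\beta_k) = k-2$. If $j$ of the inputs lie in $\mathcal{L}_{n-1}$, the image sits in degree $j(n-1) + k-2$, which must equal $0$ or $n-1$. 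A short case analysis (using $n>2$) shows that the only possibilities are $(k,j) = (2,0)$ with output in $\mathcal{L}_0$, $(k,j) = (2,1)$ with output in $\mathcal{L}_{n-1}$, and $(k,j) = (n+1, 0)$ with output in $\mathcal{L}_{n-1}$. In particular $\beta_1 = 0$ automatically, and $\beta_k = 0$ for all $k \notin \{2, n+1\}$.

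Next I would read off the content of the higher Jacobi identities (\ref{higher-j}) for each relevant $N$. Since most $\beta_k$ vanish, the identity is trivial unless both $k$ and $l$ in the decomposition $k+l = N+1$ lie in $\{2, n+1\}$. The case $N = 3$ only involves $(k,l) = (2,2)$ and splits according to whether all inputs lie in $\mathcal{L}_0$ or one of them lies in $\mathcal{L}_{n-1}$; these give, respectively, the Jacobi identity for $\beta_2|_{\mathcal{L}_0^{\boxtimes 2}}$ and the representation axiom (\ref{rep-iden}) for $\mathcal{L}_{n-1}$, exactly as in Proposition \ref{skeletal-thm}. The case $N = n+2$ involves $(k,l) \in \{(2, n+1), (n+1, 2)\}$; once I spell this out with all inputs in $\mathcal{L}_0$, I would recognise the resulting identity as $(\delta \beta_{n+1})(x_1, \ldots, x_{n+2}) = 0$ for the Lie $H$-pseudoalgebra cochain complex of $(\mathcal{L}_0, \beta_2)$ with coefficients in $(\mathcal{L}_{n-1}, \beta_2)$. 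All remaining values of $N$ correspond to $k+l = N+1$ where at least one of $k,l$ is outside $\{2, n+1\}$, so the identities hold trivially; this is the verification that requires the most care, in particular checking that compositions like $\beta_{n+1}(\beta_{n+1}(\cdots), \cdots)$ are forced to vanish by degree (they would require $2(n-1) \leq n-1$, impossible for $n>2$).

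Having extracted from a given $L_\infty$ $H$-pseudoalgebra the triple $\bigl( (\mathcal{L}_0, \beta_2),\ (\mathcal{L}_{n-1}, \beta_2),\ \beta_{n+1} \bigr)$, the converse construction is immediate: starting from $(L, M, \theta)$ with $L = (L, \beta)$ a Lie $H$-pseudoalgebra, $M = (M, \gamma)$ a representation and $\theta \in C^{n+1}_{\text{Lie-}H}(L, M)$ a cocycle, I place $L$ in arity $0$ and $M$ in arity $n-1$, define $\beta_2$ to be $\beta$ on $L\boxtimes L$ and (up to the prescribed skew-symmetrisation) $\gamma$ on $L \boxtimes M$, set $\beta_{n+1} := \theta$, and let all other $\beta_k$ vanish. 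The verification that $(\mathcal{L}, \{\beta_k\})$ satisfies the axioms of Definition \ref{shlieH} reduces to the same three facts encountered above (Jacobi for $\beta$, the action identity for $\gamma$, and $\delta \theta = 0$). The two assignments are manifestly mutually inverse, completing the correspondence. The main obstacle I anticipate is the bookkeeping in the $N = n+2$ identity, where one must track the Koszul signs, the permutation factors, and the $H^{\otimes \bullet}$-action so as to identify the resulting expression with the coboundary $\delta\beta_{n+1}$ exactly as given in Section \ref{sec2}.
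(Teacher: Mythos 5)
Your proposal is correct and follows essentially the same route as the paper: a degree count showing that the only possibly non-trivial operations are $\beta_2$ on $\mathcal{L}_0\boxtimes\mathcal{L}_0$, $\beta_2$ on $\mathcal{L}_0\boxtimes\mathcal{L}_{n-1}$ and $\beta_{n+1}$ on $\mathcal{L}_0^{\boxtimes n+1}$ (with $\beta_1=0$), then reading the $N=3$ identities as the Lie $H$-pseudoalgebra and representation axioms, the $N=n+2$ identity as $\delta\beta_{n+1}=0$, and transporting the inverse construction from Proposition \ref{skeletal-thm}. The paper itself only sketches this argument, and your case analysis (including the vanishing of compositions such as $\beta_{n+1}(\beta_{n+1}(\cdots),\cdots)$) fills in exactly the checks it leaves implicit.
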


We now introduce crossed modules of Lie $H$-pseudoalgebras and find their relations with strict $L_\infty$ $H$-pseudoalgebras.

\begin{definition}
    A {\em crossed modules} of Lie $H$-pseudoalgebras is a quadruple $(L, L', \varphi, \gamma)$ in which $L$, $L'$ are both Lie $H$-pseudoalgebras, $\varphi : L' \rightarrow L$ is a morphism of Lie $H$-pseudoalgebras and $\gamma \in \mathrm{Hom}_{H^{\otimes 2}} (L \boxtimes L', H^{\otimes 2} \otimes_{H} L')$ is a polylinear map that makes $(L', \gamma)$ into a representation of the Lie $H$-pseudoalgebra $L$ satisfying additionally
    \begin{align*}
        & (\mathrm{id}_{H^{\otimes 2}} \otimes_H \varphi) ( \gamma (x, y')) = [x * \varphi (y')], ~~~~ \gamma (  \varphi (x'), y') = [x' * y']' ~~~ \text{ and } \\
        & [\gamma (x, y') * z']' = \gamma (x, [y' * z']') - [y' * \gamma (x, z')]', \text{ for } x \in L, x', y', z' \in L'.
    \end{align*}
    Here $[\cdot * \cdot]$ and $[\cdot * \cdot]'$ denote the pseudobrackets of the Lie $H$-pseudoalgebras $L$ and $L'$, respectively.
\end{definition}

Let $L$ be a Lie $H$-pseudoalgebra with the pseudobracket $\beta = [\cdot * \cdot]$. Then the quadruple $(L, L, \mathrm{id}, \beta)$ is a crossed modules of Lie $H$-pseudoalgebras. 

Let $L$ be a Lie $H$-pseudoalgebra and $N \subset L$ be an ideal (i.e. $[x * n] \in H^{\otimes 2} \otimes_H N$, for all $x \in L$ and $n \in N$). Then the quadruple $(L, N, i, ad)$ is a crossed modules of Lie $H$-pseudoalgebras, where $i : N \hookrightarrow L$ is the inclusion map and $ad$ denotes the adjoint representation.

Let $L, L'$ be two Lie $H$-pseudoalgebras and $\theta : L \rightarrow L'$ be a morphism of Lie $H$-pseudoalgebras. Then $(L, \mathrm{ker ~}\theta, i, ad)$ is a crossed modules of Lie $H$-pseudoalgebras.

The following result is straightforward, hence we omit the proof.

\begin{proposition}\label{strict-prop}
  Let $(L, L', \varphi, \gamma)$ be a crossed modules of Lie $H$-pseudoalgebras. Then the direct sum left $H$-module $L \oplus L'$ carries a Lie $H$-pseudoalgebra structure with the pseudobracket
  \begin{align*}
      [(x, x') * (y, y')]_{L \oplus L'} := \big(   [x * y], \gamma (x, y') - (\sigma_{12} \otimes_H 1) \gamma (y, x') + [x' * y']'  \big), \text{ for } (x,x'), (y,y') \in L \oplus L'.
  \end{align*}
\end{proposition}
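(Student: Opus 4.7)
The plan is to verify that the proposed pseudobracket $\{\cdot * \cdot\}_{L \oplus L'}$ is $H^{\otimes 2}$-linear, skew-symmetric, and satisfies the Jacobi identity, so that $(L \oplus L', \{\cdot * \cdot\}_{L \oplus L'})$ is a Lie $H$-pseudoalgebra. The $H^{\otimes 2}$-linearity is immediate from the $H^{\otimes 2}$-linearity of each of the three building blocks $[\cdot * \cdot]$, $[\cdot * \cdot]'$, and $\gamma$. For skew-symmetry, writing $\tau = (\sigma_{12} \otimes_H 1)$, a direct calculation gives
\begin{align*}
\{(y,y') * (x,x')\}_{L \oplus L'}
&= \big([y*x],\ \gamma(y,x') - \tau\gamma(x,y') + [y'*x']'\big)\\
&= -\tau \cdot \big([x*y],\ \gamma(x,y') - \tau\gamma(y,x') + [x'*y']'\big),
\end{align*}
using skew-symmetry of the two underlying pseudobrackets together with $\tau^2 = \mathrm{id}$.

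The core of the proof is the Jacobi identity. For $X=(x,x')$, $Y=(y,y')$, $Z=(z,z')$, the identity $\{\{X*Y\}*Z\} = \{X*\{Y*Z\}\} - \{Y*\{X*Z\}\}$ splits along the decomposition $L \oplus L'$. The projection onto $L$ is precisely the Jacobi identity for $(L,[\cdot * \cdot])$. The projection onto $L'$ is the substantive part, where all the crossed module structure enters.

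To verify the $L'$-component, I would expand both sides and group the resulting summands into three families. The pure $L'$-bracket terms $[[x'*y']'*z']'$ and its cyclic analogues cancel by the Jacobi identity for $(L', [\cdot * \cdot]')$. The terms built only from $\gamma$ and the $L$-bracket, such as $\gamma([x*y], z')$ or $\gamma(x, \gamma(y, z'))$, cancel by virtue of $(L',\gamma)$ being a representation of $L$, namely the identity (\ref{rep-iden}) applied to $\gamma$. The remaining mixed terms couple exactly one $\gamma$ with one $L'$-bracket, for example $[\gamma(x,y') * z']'$ versus $\gamma(x, [y'*z']')$; these are handled precisely by the crossed module compatibility
\begin{align*}
[\gamma(x, y') * z']' = \gamma(x, [y'*z']') - [y' * \gamma(x,z')]'.
\end{align*}

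The main obstacle will be keeping track of the permutations of tensor legs in $H^{\otimes 3}$ introduced by each iterated composition in the pseudotensor category, since the composition rule (\ref{comp-maps}) repeatedly invokes the comultiplication of $H$; the explicit expressions (\ref{xy-z}) and (\ref{x-yz}) indicate the kind of bookkeeping required. Once this is done carefully, every term on the left-hand side pairs with a unique term on the right, finishing the proof. It is worth noting that the two remaining crossed module axioms $(\mathrm{id}_{H^{\otimes 2}} \otimes_H \varphi)(\gamma(x,y')) = [x*\varphi(y')]$ and $\gamma(\varphi(x'), y') = [x'*y']'$ are \emph{not} needed for this particular statement; they will play their role elsewhere, for instance when the category of crossed modules of Lie $H$-pseudoalgebras is matched up with strict $2$-term $L_\infty$ $H$-pseudoalgebras.
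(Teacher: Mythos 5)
Your proposal is correct and is exactly the direct verification the paper has in mind (the paper simply omits the proof as ``straightforward''): split the Jacobi identity along $L\oplus L'$, use the Jacobi identities of $L$ and $L'$, the representation identity (\ref{rep-iden}) for $\gamma$, and the compatibility $[\gamma(x,y')*z']'=\gamma(x,[y'*z']')-[y'*\gamma(x,z')]'$ (together with skew-symmetry of $[\cdot *\cdot]'$) for the mixed terms, with the usual $H^{\otimes 3}$ permutation bookkeeping. Your observation that the two axioms involving $\varphi$ are not used here is also accurate, since $\varphi$ does not enter the bracket on $L\oplus L'$.
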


\begin{thm}\label{strict-thm}
    There is a one-to-one correspondence between strict $L_\infty$ $H$-pseudoalgebras and crossed modules of Lie $H$-pseudoalgebras.
\end{thm}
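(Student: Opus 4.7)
The plan is to set up a direct correspondence between the two kinds of objects. Given a strict $L_\infty$ $H$-pseudoalgebra $(\mathcal{L}_1 \xrightarrow{\beta_1} \mathcal{L}_0, \beta_2, 0)$, I would put $L := \mathcal{L}_0$ with pseudobracket $[x * y] := \beta_2(x, y)$ for $x, y \in \mathcal{L}_0$, set $\varphi := \beta_1$, take $\gamma := \beta_2|_{\mathcal{L}_0 \boxtimes \mathcal{L}_1}$, and define the pseudobracket on $L' := \mathcal{L}_1$ by the formula $[u * v]' := \beta_2(\beta_1(u), v) = \gamma(\varphi(u), v)$, which is forced if axiom (2) of the crossed module is to hold. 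In the reverse direction, given a crossed module $(L, L', \varphi, \gamma)$, the only possible definitions are $\mathcal{L}_0 := L$, $\mathcal{L}_1 := L'$, $\beta_1 := \varphi$, $\beta_2|_{\mathcal{L}_0 \boxtimes \mathcal{L}_0}$ equal to the pseudobracket on $L$, $\beta_2|_{\mathcal{L}_0 \boxtimes \mathcal{L}_1} := \gamma$ extended skew-symmetrically, and $\beta_3 := 0$.

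The heart of the argument is that conditions (i)--(v) of Definition \ref{defn2term} (with $\beta_3 = 0$) translate bijectively into the axioms of a crossed module. Specifically: condition (iii) is exactly the Jacobi identity for $[\cdot * \cdot]$ on $L$, together with graded skew-symmetry of $\beta_2$ it gives the Lie $H$-pseudoalgebra structure on $L$; condition (i) is precisely $(\mathrm{id}_{H^{\otimes 2}} \otimes_H \varphi)(\gamma(x, y')) = [x * \varphi(y')]$, the equivariance axiom of a crossed module; condition (iv) (with $\beta_3 = 0$) is precisely the representation identity \eqref{rep-iden} making $(L', \gamma)$ a representation of $L$; condition (v) is automatic when $\beta_3 = 0$. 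Condition (ii) rewritten via graded skew-symmetry of $\beta_2$ gives $\gamma(\varphi(u), v) = -(\sigma_{12} \otimes_H 1)\gamma(\varphi(v), u)$, which is the skew-symmetry of $[\cdot * \cdot]'$. To get the Jacobi identity on $L'$, I would combine condition (i) with condition (iv): writing $\beta_1([u * v]') = \beta_1(\gamma(\varphi(u), v)) = [\varphi(u) * \varphi(v)]$ via (i), and then applying (iv) with triple $(\varphi(u), \varphi(v), w)$, one obtains $\gamma([\varphi(u) * \varphi(v)], w) = \gamma(\varphi(u), \gamma(\varphi(v), w)) - \gamma(\varphi(v), \gamma(\varphi(u), w))$, which is exactly $[[u * v]' * w]' = [u * [v * w]']' - [v * [u * w]']'$ up to the appropriate permutation on $H^{\otimes 3}$. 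The axiom $(\mathrm{id}_{H^{\otimes 2}} \otimes_H \varphi)[u * v]' = [\varphi(u) * \varphi(v)]$ also follows immediately from (i), showing $\varphi$ is a morphism of Lie $H$-pseudoalgebras. Finally, the compatibility $[\gamma(x, y') * z']' = \gamma(x, [y' * z']') - [y' * \gamma(x, z')]'$ is obtained by expanding $[\gamma(x, y') * z']' = \gamma(\varphi(\gamma(x, y')), z') = \gamma([x * \varphi(y')], z')$ using axiom (i), and then applying (iv) with triple $(x, \varphi(y'), z')$.

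The two constructions are manifestly mutually inverse on objects, so the correspondence is one-to-one. The main obstacle I anticipate is bookkeeping: every application of (iii), (iv) or skew-symmetry introduces compositions of pseudobrackets of the form described in \eqref{xy-z}--\eqref{x-yz}, and verifying that the iterated comultiplications and the permutations $\sigma_{1 \to i}$ on $H^{\otimes 3}$ line up consistently across both directions is delicate. In particular, writing down the Jacobi identity on $L'$ explicitly as an equation in $H^{\otimes 3} \otimes_H \mathcal{L}_1$ and matching it with the image under $(\mathrm{id}_{H^{\otimes 3}} \otimes_H \gamma)$ of condition (iv), with the correct $\sigma_{12}$-twists coming from graded skew-symmetry, is where almost all of the technical work sits; once the $H$-linearity and permutation conventions are fixed as in Section \ref{sec2}, however, everything is forced and one can read off the equivalence.
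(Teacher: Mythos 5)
Your proposal is correct and follows essentially the same route as the paper: on the strict side you put the Lie $H$-pseudoalgebra structure on $\mathcal{L}_0$ via condition (iii), define $[u*v]':=\beta_2(\beta_1(u),v)$ on $\mathcal{L}_1$ (with condition (ii) giving its well-definedness/skew-symmetry and conditions (i),(iv) giving the remaining crossed-module axioms), and conversely assemble $(L'\xrightarrow{\varphi}L,\beta_2,\beta_3=0)$ from a crossed module with $\beta_2|_{L\boxtimes L'}=\gamma$. The only difference is that you spell out the verifications (Jacobi identity on $L'$, the compatibility identities) that the paper leaves as ``easy to verify,'' which is a matter of detail rather than of method.
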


\begin{proof}
    Let $(\mathcal{L}_1 \xrightarrow{ \beta_1} \mathcal{L}_0, \beta_2, \beta_3 = 0)$ be a strict $L_\infty$ $H$-pseudoalgebra. Then it follows from condition (iii) of Definition \ref{defn2term} that the left $H$-module $\mathcal{L}_0$ equipped with the skew-symmetric polylinear map $\beta_2 \in \mathrm{Hom}_{H^{\otimes 2}} (\mathcal{L}_0 \boxtimes \mathcal{L}_0 , H^{\otimes} \otimes_H \mathcal{L}_0)$ is a Lie $H$-pseudoalgebra. Next, consider the left $H$-module $\mathcal{L}_1$ with the skew-symmetric polylinear map $[u * v]' := \beta_2 ( \beta_1 (u), v) = \beta_2 (u, \beta_1 (v))$, for $u, v \in \mathcal{L}_1$. Then it follows from condition (iv) of Definition \ref{defn2term} that $\mathcal{L}_1$ is a Lie $H$-pseudoalgebra. Further, the $H$-linear map $\beta_1 : \mathcal{L}_1 \rightarrow \mathcal{L}_0$ is a morphism of Lie $H$-pseudoalgebras (follows from condition (i)). Finally, the polylinear map $\beta_2 \in \mathrm{Hom}_{H^{\otimes 2}} (\mathcal{L}_0 \boxtimes \mathcal{L}_1 , H^{\otimes 2} \otimes_H \mathcal{L}_1)$ defines a representation of the Lie $H$-pseudoalgebra $(\mathcal{L}_0 , \beta_2)$ on the left $H$-module $\mathcal{L}_1$. With these notations, it is easy to verify that $(\mathcal{L}_0, \mathcal{L}_1, \beta_1, \beta_2)$ is a crossed modules of Lie $H$-pseudoalgebras.

    On the other hand, if $( (L, [\cdot * \cdot]), (L', [\cdot * \cdot]'), \varphi, \gamma)$ is a crossed modules of Lie $H$-pseudoalgebras, then it can be verified that $(L' \xrightarrow{\varphi} L, \beta_2, \beta_3  = 0)$ is a strict $L_\infty$ $H$-pseudoalgebra, where
    \begin{align*}
        \beta_2 (x, y) = [x * y], ~~~~ \beta_2 (x, x') = - \beta_2 (x', x) := \gamma (x, x') ~~~ \text{ and } ~~~ \beta_2 (x', y') = 0, \text{ for } x, y \in L, x', y' \in L'.
    \end{align*}
    Finally, the above two correspondences are inverses to each other.
\end{proof}

Combining Proposition \ref{strict-prop} with Theorem \ref{strict-thm}, we get the following result.

\begin{proposition}
    Let $(\mathcal{L}_1 \xrightarrow{\beta_1} \mathcal{L}_0, \beta_2, \beta_3  = 0)$ be a strict $L_\infty$ $H$-pseudoalgebra. Then $(\mathcal{L}_0 \oplus \mathcal{L}_1, [\cdot * \cdot]_{\mathcal{L}_0 \oplus \mathcal{L}_1})$ is a Lie $H$-pseudoalgebra, where
    \begin{align*}
        [(x, u) * (y, v)]_{ \mathcal{L}_0 \oplus \mathcal{L}_1 } := \big(  \beta_2 (x, y), \beta_2 (x, v) - (\sigma_{12} \otimes_H 1) \beta (y, u) + \beta_2 (\beta_1 (u), v) \big), \text{ for } (x, u), (y, v) \in \mathcal{L}_0 \oplus \mathcal{L}_1.
    \end{align*}
\end{proposition}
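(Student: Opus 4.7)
The proof is essentially a composition of the two preceding results, so the plan is to set up the correspondences carefully and then invoke them in sequence. First, I would apply Theorem \ref{strict-thm} to the given strict $L_\infty$ $H$-pseudoalgebra $(\mathcal{L}_1 \xrightarrow{\beta_1} \mathcal{L}_0, \beta_2, \beta_3 = 0)$. By the construction in the proof of that theorem, this data is equivalent to a crossed module $(L, L', \varphi, \gamma)$, where $L = \mathcal{L}_0$ carries the pseudobracket $[x * y] := \beta_2(x,y)$, where $L' = \mathcal{L}_1$ carries the pseudobracket $[u * v]' := \beta_2(\beta_1(u), v) = \beta_2(u, \beta_1(v))$, where $\varphi := \beta_1$ is the morphism of Lie $H$-pseudoalgebras, and where $\gamma(x, u) := \beta_2(x, u)$ is the action.

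Next, I would apply Proposition \ref{strict-prop} to this crossed module to obtain a Lie $H$-pseudoalgebra structure on $L \oplus L' = \mathcal{L}_0 \oplus \mathcal{L}_1$. The pseudobracket produced by Proposition \ref{strict-prop} is, after substituting the specific forms of $[\cdot * \cdot]$, $\gamma$ and $[\cdot * \cdot]'$ identified above,
\begin{align*}
[(x, u) * (y, v)]_{L \oplus L'} &= \bigl( [x * y],~ \gamma(x, v) - (\sigma_{12} \otimes_H 1) \gamma(y, u) + [u * v]' \bigr) \\
&= \bigl( \beta_2(x, y),~ \beta_2(x, v) - (\sigma_{12} \otimes_H 1) \beta_2(y, u) + \beta_2(\beta_1(u), v) \bigr),
\end{align*}
which is precisely the bracket $[\cdot * \cdot]_{\mathcal{L}_0 \oplus \mathcal{L}_1}$ appearing in the statement. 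Since Proposition \ref{strict-prop} guarantees that this is a Lie $H$-pseudoalgebra, the conclusion follows.

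There is no genuine obstacle; the only point that needs care is bookkeeping the translation between the $L_\infty$ structure maps $\beta_2$ and the crossed-module data $([\cdot *\cdot], [\cdot*\cdot]', \gamma)$ so that the two intermediate bracket formulas match term by term. In particular, one should note that the expression $\beta_2(\beta_1(u), v)$ in the second slot is well-defined and coincides with $\beta_2(u, \beta_1(v))$ by condition (ii) of Definition \ref{defn2term}, which is exactly what makes $[\cdot * \cdot]'$ a well-defined Lie pseudobracket on $\mathcal{L}_1$ in the crossed-module translation. Once this is in place, the proof reduces to citing Theorem \ref{strict-thm} followed by Proposition \ref{strict-prop}.
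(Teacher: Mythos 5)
Your proposal is correct and is exactly the paper's argument: the paper derives this proposition by combining Theorem \ref{strict-thm} (strict $2$-term $L_\infty$ $H$-pseudoalgebras correspond to crossed modules, with $[u*v]' = \beta_2(\beta_1(u),v)$, $\varphi = \beta_1$, $\gamma = \beta_2$) with Proposition \ref{strict-prop} (the semidirect-sum Lie $H$-pseudoalgebra on $L \oplus L'$), and your substitution recovers the stated bracket, including the reading of the typo $\beta(y,u)$ as $\beta_2(y,u)$.
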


\section{Categorification of Lie {\em H}-pseudoalgebras}\label{sec6}
Categorification of Lie algebras, also known as Lie-$2$ algebras was introduced by Baez and Crans \cite{baez-crans} in the study of higher structures. They observed that Lie-$2$ algebras are closely related to $L_\infty$ algebras. Among others, they showed that the category of Lie-$2$ algebras is equivalent to the category of $2$-term $L_\infty$ algebras. In this section, we generalize the above result in the context of Lie $H$-pseudoalgebras. For this, we first introduce Lie-$2$ $H$-pseudoalgebras as the categorification of Lie $H$-pseudoalgebras and show that the collection of all Lie-$2$ $H$-pseudoalgebras form a category, which we denote by {\bf Lie2$H$}. Finally, we prove that the category {\bf Lie2$H$} is equivalent to the category {\bf 2shLie}$H$ of $2$-term $L_\infty$ $H$-pseudoalgebras, hence finds a bridge between categorification and homotopification of Lie $H$-pseudoalgebras.

Let $H$ be a cocommutative Hopf algebra and $\mathcal{M}^l (H)$ be the category of left $H$-modules.

\begin{definition}
    A {\em left-$2$ module over $H$} is a category internal to the category $\mathcal{M}^l (H)$.
\end{definition}

It follows that a left-$2$ module over $H$ is a category $C= (C_1 \rightrightarrows C_0)$, whose collection of objects $C_0$ and collection of morphisms $C_1$ are both left $H$-modules such that the source and target maps $s, t : C_1 \rightarrow C_0$, the identity-assigning map $i: C_0 \rightarrow C_1$ and the composition map $C_1 \times_{C_0} C_1 \rightarrow C_1$ are all $H$-linear maps. For any $x \in C_0$, the image $i(x)$ of the identity-assigning map is often denoted by $1_x$.

Let $C = (C_1 \rightrightarrows C_0)$ and $C' = (C'_1 \rightrightarrows C'_0)$ be two left-$2$ modules over $H$. A {\em morphism} between them is a functor internal to the category $\mathcal{M}^l (H)$. Thus, a morphism from $C$ to $C'$ consists of a pair $F = (F_0, F_1)$ of $H$-linear maps $F_0 : C_0 \rightarrow C_0'$ and $F_1 : C_1 \rightarrow C_1'$ that commute with all the structure maps of the categories $C$ and $C'$. The collection of all left-$2$ modules over $H$ and morphisms between them form a category, denoted by $\mathcal{M}^{l2} (H).$

On the other hand, a {\em $2$-term chain complex over $H$} is a complex of the form $\mathcal{L}_1 \xrightarrow{\partial} \mathcal{L}_0$, where $\mathcal{L}_0$, $\mathcal{L}_1$ are both left $H$-modules and $\partial$ is a $H$-linear map. A {\em morphism} of $2$-term chain complexes from $\mathcal{L}_1 \xrightarrow{\partial} \mathcal{L}_0$ to $\mathcal{L}'_1 \xrightarrow{\partial'} \mathcal{L}'_0$ is given by a pair $(f_0, f_1)$ of $H$-linear maps $f_0 : \mathcal{L}_0 \rightarrow \mathcal{L}_0'$ and $f_1 : \mathcal{L}_1 \rightarrow \mathcal{L}_1'$ satisfying $\partial' \circ f_1 = f_0 \circ \partial$. The collection of all $2$-term chain complexes over $H$ and morphisms between them form a category, denoted by {\bf 2Term}$H$.

Let $C = (C_1 \rightrightarrows C_0)$ be a left-$2$ module over $H$, hence an object in $\mathcal{M}^{l2} (H)$. Then $\mathrm{ker}(s) \xrightarrow{ t|_{ \mathrm{ker}(s) }  } C_0$ is a $2$-term chain complex over $H$. Moreover, a morphism between left-$2$ modules over $H$ induces a morphism between the corresponding $2$-term chain complexes over $H$. Hence we obtain a functor $\mathcal{S} : \mathcal{M}^{l2} (H) \rightarrow {\bf 2Term}H$. Conversely, if $\mathcal{L}_1 \xrightarrow{ \partial} \mathcal{L}_0$ is a $2$-term chain complex over $H$ (hence an object in {\bf 2Term}$H$) then
\begin{align*}
    C:= (\mathcal{L}_0 \oplus \mathcal{L}_1 \rightrightarrows \mathcal{L}_0 )
\end{align*}
is a left-$2$ module over $H$, where the structure maps are given by
\begin{align*}
    s (x, u) = x, ~~~ t(x, u) = x + \partial u ~~~ \text{ and } ~~~ i(x) = (x, 0), \text{ for } (x, u) \in \mathcal{L}_0 \oplus \mathcal{L}_1, x \in \mathcal{L}_0. 
\end{align*}
Similarly, a morphism of $2$-term chain complexes gives rise to a morphism between the corresponding left-$2$ modules over $H$. As a consequence, we get a functor $\mathcal{T} : {\bf 2Term}H \rightarrow \mathcal{M}^{l2} (H)$. Finally, it is easy to show that the functors $\mathcal{S}$ and $\mathcal{T}$ make the categories $\mathcal{M}^{l2} (H)$ and ${\bf 2Term}H$ equivalent. See \cite{baez-crans} for the similar observation when $H = {\bf k}$.

\begin{definition}
    A {\em Lie-$2$ $H$-pseudoalgebra} is a triple $(C, [\cdot * \cdot], \mathcal{J})$ that consists of a left-$2$ module $C= (C_1 \rightrightarrows C_0)$ over $H$, a $H^{\otimes 2}$-linear skew-symmetric functor $[\cdot * \cdot ] : C \otimes C \rightarrow H^{\otimes 2} \otimes_H C$ and a $H^{\otimes 3}$-linear skew-symmetric natural isomorphism (called the {\em Jacobiator})
    \begin{align*}
        \mathcal{J}_{x, y, z} : [[ x * y] * z] \rightarrow [x * [y * z]] - [y * [x * z]]
    \end{align*}
    that satisfy the following Jacobiator identity:

    \begin{align}\label{hexa-diag}
    \xymatrix{
   &  [[[x * y] * z] * w] \ar[rdd]^{\mathcal{J}_{[x * y] , z , w}} \ar[ldd]_{[  \mathcal{J}_{x, y, z} * w]} & \\
    & & \\
  \substack{[[x * [y * z]] * w] \\- [[y * [x * z]] * w]} \ar[dd]_{ \mathcal{J}_{x, [y * z], w} - \mathcal{J}_{y, [x * z], w}  } & & \substack{[[x * y]*[z * w]] \\ - [z * [[x * y]* w]]} \ar[dd]^{1 - [z * \mathcal{J}_{x, y, w}]}\\
   & & \\
   \substack{ [x * [[y * z] * w]] - [[y * z]* [x*w]] \\ - [y * [[x * z]* w]] + [[x * z] * [y * w]]} \ar[rdd]_{  [ x * \mathcal{J}_{y, z, w}] - \mathcal{J}_{y, z, [x * w]} - [y * \mathcal{J}_{x, z, w}] + \mathcal{J}_{x, z, [y* w]}   } & & \substack{ [[x* y] * [z * w]] - [z * [x * [y * w]]] \\ + [z * [y * [x * w]]] } \ar[ldd]^{ \mathcal{J}_{x, y, [z * w]} - 1 + 1 } \\
    & & \\
    & \substack{[x * [y * [z * w]]]  - [y * [x * [z * w]]] \\
    - [z * [x * [y * w]]] + [z * [ y * [x * w]]].} & 
    }
    \end{align}
\end{definition}

Let $(C, [\cdot * \cdot], \mathcal{J})$ and $(C', [\cdot * \cdot]', \mathcal{J}')$ be two Lie-$2$ $H$-pseudoalgebras. A {\em morphism} between them is a functor $F = (F_0, F_1) : C \rightarrow C'$ between the underlying left-$2$ modules over $H$ and a natural isomorphism
\begin{align*}
    (F_2)^{x, y} : [F_0 (x) * F_0 (y)]' \rightarrow (\mathrm{id}_{H^{\otimes 2}} \otimes_H F_0) [x * y]
\end{align*}
such that for any $x, y, z \in C_0$, the following diagram is commutative
\[
\xymatrix{
[[F_0 (x) * F_0 (y)]' * F_0 (z)]' \ar[r]^{\mathcal{J}'_{F_0 (x), F_0 (y), F_0 (z)}} \ar[d]_{  [(F_2)^{x,y} * F_0 (z)]'  } & [F_0 (x) * [F_0 (y) * F_0 (z)]' ]' -  [F_0 (y) * [F_0 (x) * F_0 (z)]' ]' \ar[d]^{[F_0 (x) * (F_2)^{y, z}]' - [F_0 (y) * (F_2)^{x, z}]'}   \\
[(\mathrm{id}_{H^{\otimes 2}} \otimes_H F_0) [x * y] * F_0 (z)]' \ar[d]_{ (F_2)^{[x * y], z} } & [F_0 (x) * (\mathrm{id}_{H^{\otimes 2}} \otimes_H F_0) [y * z]]' - [ F_0 (y) * (\mathrm{id}_{H^{\otimes 2}} \otimes_H F_0) [x* z]]' \ar[d]^{ (F_2)^{x, [y * z]} - (F_2)^{y, [x * z]}  } \\
(\mathrm{id}_{H^{\otimes 3}} \otimes_H F_0) [ [x * y] * z] \ar[r]_{(\mathrm{id}_{H^{\otimes 3}} \otimes_H F_0) (\mathcal{J}_{x, y, z})}  & (\mathrm{id}_{H^{\otimes 3}} \otimes_H F_0) \big( [x * [y * z]] - [y * [x * z]]  \big).
}
\]
We often denote a morphism as above by the triple $(F_0, F_1, F_2)$.

\begin{thm}
    The collection of Lie-$2$ $H$-pseudoalgebras and morphisms between them form a category. We denote this category by {\bf Lie2$H$}.
\end{thm}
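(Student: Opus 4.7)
The argument is the routine but nontrivial verification that natural choices of composition and identity satisfy the axioms of a category. Given two composable morphisms $(F_0, F_1, F_2) : (C, [\cdot * \cdot], \mathcal{J}) \to (C', [\cdot * \cdot]', \mathcal{J}')$ and $(G_0, G_1, G_2) : (C', [\cdot * \cdot]', \mathcal{J}') \to (C'', [\cdot * \cdot]'', \mathcal{J}'')$, the plan is to define the composition to be the triple $(G_0 \circ F_0, \; G_1 \circ F_1, \; (G \circ F)_2)$, where the third component is the vertical composite
\begin{align*}
(G \circ F)_2^{x,y} \; := \; (\mathrm{id}_{H^{\otimes 2}} \otimes_H G_0)(F_2)^{x,y} \; \circ \; (G_2)^{F_0(x), F_0(y)},
\end{align*}
viewed as a morphism $[G_0 F_0(x) * G_0 F_0(y)]'' \to (\mathrm{id}_{H^{\otimes 2}} \otimes_H G_0 F_0)[x * y]$. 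The identity morphism on $(C, [\cdot * \cdot], \mathcal{J})$ is taken to be $(\mathrm{id}_{C_0}, \mathrm{id}_{C_1}, \mathrm{id})$, the last entry being the identity natural transformation on $[\cdot * \cdot]$.

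The verification splits into three steps. First, I would check well-definedness: since $\mathcal{M}^{l2}(H)$ is a category, the pair $(G_0 \circ F_0, G_1 \circ F_1)$ is again a functor between left-$2$ modules over $H$; and the vertical composite $(G \circ F)_2^{x,y}$ is $H^{\otimes 2}$-linear, skew-symmetric and natural in $x, y$ because $(F_2)$ and $(G_2)$ are so, while $(\mathrm{id} \otimes_H G_0)$ preserves these properties. Second, I would verify that $(G_0 \circ F_0, G_1 \circ F_1, (G \circ F)_2)$ satisfies the coherence diagram appearing in the definition of a morphism of Lie-$2$ $H$-pseudoalgebras (the one compatible with the Jacobiator (\ref{hexa-diag})). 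Third, strict associativity and strict unitality for composition of morphisms follow immediately from the corresponding strict laws for composition of $H$-linear maps together with the interchange law for vertical composition of natural transformations.

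The main obstacle is the second step, the coherence verification. The strategy is the standard pasting argument, transported into the pseudotensor setting. One decomposes the coherence hexagon for $(G \circ F)$ into three commuting regions glued along shared arrows: an outer region, living entirely in $C''$, which commutes by the coherence diagram for $G$ applied to the triple $(F_0(x), F_0(y), F_0(z))$; an innermost region, obtained by applying the linear functor $(\mathrm{id}_{H^{\otimes 3}} \otimes_H G_0)$ to the coherence diagram for $F$, which commutes because linear functors preserve commutative diagrams; and intermediate rectangles witnessing the naturality of the Jacobiator $\mathcal{J}''$ along components of $(G_2)$ and of $(\mathrm{id} \otimes_H G_0) \circ (F_2)$. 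Assembling these regions yields the required commutative diagram for $G \circ F$. The only bookkeeping subtlety is to keep track of the $H^{\otimes n}$-actions and the iterated coproducts along the shared edges, but these match automatically because every morphism involved is $H^{\otimes n}$-linear for the appropriate $n$. This completes the proof that {\bf Lie2}$H$ is indeed a category.
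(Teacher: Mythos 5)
Your proposal matches the paper's proof in its essentials: the composition is defined by the same triple $(G_0\circ F_0,\, G_1\circ F_1,\, (\mathrm{id}_{H^{\otimes 2}}\otimes_H G_0)(F_2)\circ (G_2)^{F_0(-),F_0(-)})$ and the identity by $(\mathrm{id}_{C_0},\mathrm{id}_{C_1},\mathrm{id})$, which is exactly what the paper does before declaring the remaining verification straightforward. Your additional sketch of the pasting argument for the coherence hexagon only fills in details the paper omits, so the approach is essentially the same.
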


\begin{proof}
    The verification of this result is straightforward once we define the composition of morphisms and the identity morphisms. Let $(C, [\cdot * \cdot], \mathcal{J})$ and $(C', [\cdot * \cdot]', \mathcal{J}')$ be two Lie-$2$ $H$-pseudoalgebras and $(F_0, F_1, F_2)$ be a morphism from $C$ to $C'$. Suppose $(C'', [\cdot * \cdot]'', \mathcal{J}'')$ is another Lie-$2$ $H$-pseudoalgebra and $(G_0, G_1, G_2)$ is a morphism from $C'$ to $C''$. Then their composition is defined by the triple
    \begin{align*}
        \big(  G_0 \circ F_0, G_1 \circ F_1, (\mathrm{id}_{H^{\otimes 2}} \otimes_H G_0)(F_2^{-, -}) \circ (G_2)^{F_0 (-), F_0 (-)}  \big),
    \end{align*}
    where the last component can be understood by the following composition
    \begin{align*}
    [(G_0 \circ F_0)(x) * (G_0 \circ F_0)(y)]''  \xrightarrow{(G_2)^{F_0 (x), F_0 (y)}} &
    (\mathrm{id}_{H^{\otimes 2}} \otimes_H G_0) [F_0 (x) * F_0 (y)]' \\ & \xrightarrow{(\mathrm{id}_{H^{\otimes 2}} \otimes_H G_0) (F_2)^{x, y} }
    (\mathrm{id}_{H^{\otimes 2}} \otimes_H G_0 \circ F_0) [x * y].
    \end{align*}
    Moreover, if $(C, [\cdot * \cdot], \mathcal{J})$ is any Lie-$2$ $H$-pseudoalgebra, then the identity morphism on $C$ is given by the triple $(\mathrm{id}_{C_0}, \mathrm{id}_{C_1}, \mathrm{id}^{-, -})$, where $\mathrm{id}^{x, y} : [x * y] \rightarrow [x * y]$ is the identity natural isomorphism.
\end{proof}

We are now in a position to prove the main result of this section.

\begin{thm}\label{last-thm}
    The categories {\bf Lie2$H$} and {\bf 2shLie$H$} are equivalent.
\end{thm}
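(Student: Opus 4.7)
The plan is to construct explicit functors $\Psi : {\bf 2shLie}H \to {\bf Lie2}H$ and $\Phi : {\bf Lie2}H \to {\bf 2shLie}H$ and show they form a mutually quasi-inverse pair, lifting the equivalence between $\mathcal{M}^{l2}(H)$ and ${\bf 2Term}H$ (given by the functors $\mathcal{S}, \mathcal{T}$ discussed above) to the respective bracket-bearing categories. First, I would set up $\Psi$. Given a $2$-term $L_\infty$ $H$-pseudoalgebra $(\mathcal{L}_1 \xrightarrow{\beta_1} \mathcal{L}_0, \beta_2, \beta_3)$, apply $\mathcal{T}$ to get the left-$2$ module $C = (\mathcal{L}_0 \oplus \mathcal{L}_1 \rightrightarrows \mathcal{L}_0)$ with the structure maps described just before the definition of Lie-$2$ $H$-pseudoalgebras. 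Define a bracket functor by setting $[x * y] := \beta_2(x,y)$ on objects and, on morphisms $(x,u), (y,v)$,
\begin{align*}
 [(x,u) * (y,v)] := \big(\beta_2(x,y),\ \beta_2(x,v) - (\sigma_{12} \otimes_H 1)\beta_2(y,u) + \beta_2(\beta_1(u), v)\big).
\end{align*}
Conditions (i), (ii) of Definition \ref{defn2term} ensure that source and target match, and this is nothing but the Lie $H$-pseudoalgebra bracket from the final proposition of Section \ref{sec5} restricted to a bifunctor. The Jacobiator is then defined by $\mathcal{J}_{x,y,z} := \big(\beta_2(\beta_2(x,y),z),\ -\beta_3(x,y,z)\big)$, a morphism in $H^{\otimes 3}\otimes_H C_1$ whose source and target match $[[x*y]*z]$ and $[x*[y*z]] - [y*[x*z]]$ thanks to condition (iii); its naturality follows from condition (iv) and its skew-symmetry from that of $\beta_3$.

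Next, I would construct $\Phi$ by essentially reversing the above. Given $(C, [\cdot * \cdot], \mathcal{J})$, apply $\mathcal{S}$ to obtain $\mathcal{L}_1 := \ker(s) \xrightarrow{t|_{\ker(s)}} \mathcal{L}_0 := C_0$. Define $\beta_2 : \mathcal{L}_0 \boxtimes \mathcal{L}_0 \to H^{\otimes 2}\otimes_H \mathcal{L}_0$ from the bracket functor on objects, and $\beta_2 : \mathcal{L}_0 \boxtimes \mathcal{L}_1 \to H^{\otimes 2}\otimes_H \mathcal{L}_1$ by applying the bracket functor to $1_x$ and a morphism $u \in \ker(s)$. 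Define $\beta_3(x,y,z)$ to be the unique element of $H^{\otimes 3}\otimes_H \ker(s)$ corresponding to $\mathcal{J}_{x,y,z} - 1_{[[x*y]*z]}$, i.e.\ the element measuring the failure of strict Jacobi. Then condition (iii) of Definition \ref{defn2term} is built into the source/target of $\mathcal{J}$, while condition (iv) is exactly the naturality of $\mathcal{J}$ along morphisms coming from $\mathcal{L}_1$. The Jacobiator identity (v) translates directly into the commutativity of the hexagonal diagram (\ref{hexa-diag}).

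On morphisms, a $1$-morphism $(F_0, F_1, F_2^{\cdot,\cdot})$ of Lie-$2$ $H$-pseudoalgebras corresponds to a triple $(f_0, f_1, f_2)$ of $2$-term $L_\infty$ $H$-pseudoalgebra morphism data: $f_0 = F_0$, $f_1 = F_1|_{\ker(s)}$, and $f_2 := F_2^{\cdot,\cdot} - 1$. The four compatibility conditions listed after Definition \ref{defn2term} then correspond to: $H$-linearity of $F_0, F_1$ with the chain complex condition, functoriality of $F$ (on objects), functoriality of $F$ (on morphisms in $\ker(s)$), and the single large square in the definition of a morphism of Lie-$2$ $H$-pseudoalgebras. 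Composition and identities are respected under this translation by direct check, giving honest functors.

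Finally, to establish the equivalence, I would verify $\Phi \circ \Psi \cong \mathrm{id}$ and $\Psi \circ \Phi \cong \mathrm{id}$ by unwinding the definitions of $\mathcal{S}, \mathcal{T}$: the underlying natural isomorphism is induced by the canonical isomorphism $\ker(s) \cong \{0\} \oplus \mathcal{L}_1$ inside $\mathcal{L}_0 \oplus \mathcal{L}_1$. The main obstacle will be the detailed verification that the Jacobiator identity (\ref{hexa-diag})—with its ten distinct terms built from iterated brackets and $\mathcal{J}$—matches exactly the scalar Jacobiator identity (v) of Definition \ref{defn2term}. This requires carefully tracking the source/target of each edge in the hexagon, using naturality of $\mathcal{J}$ and skew-symmetry to rewrite each composite as an element of $H^{\otimes 4} \otimes_H \ker(s)$ of the form $\beta_3(\cdot,\cdot,\cdot)$ applied to appropriate triples, and then summing signs. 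All remaining compatibilities are direct but tedious unwindings of the definitions.
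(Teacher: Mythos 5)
Your proposal is correct and follows essentially the same route as the paper: both directions lift the equivalence $\mathcal{S},\mathcal{T}$ between $\mathcal{M}^{l2}(H)$ and ${\bf 2Term}H$, with the bracket on $\mathcal{L}_0\oplus\mathcal{L}_1$, the Jacobiator built from $(\beta_2(\beta_2(x,y),z),\beta_3(x,y,z))$, the reverse construction via $\ker(s)$ and $pr_2(\mathcal{J})$, and the same treatment of morphisms and the natural isomorphisms. The only discrepancy is a sign slip: with the convention $t(x,u)=x+\beta_1(u)$, condition (iii) forces the $\ker(s)$-component of $\mathcal{J}_{x,y,z}$ to be $+\beta_3(x,y,z)$ rather than $-\beta_3(x,y,z)$; this is trivially fixed and does not affect the argument.
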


\begin{proof}
    Let $ ( C= (C_1 \rightrightarrows C_0), [\cdot * \cdot], \mathcal{J})$ be a Lie-$2$ $H$-pseudoalgebra. First consider the $2$-term chain complex $\mathrm{ker}(s) \xrightarrow{ t|_{\mathrm{ker}(s)} } C_0$ over $H$. We define $H^{\otimes 2}$-linear maps (both denoted by the same notation)
    \begin{align*}
        \beta_2 : C_0 \otimes C_0 \rightarrow H^{\otimes 2} \otimes_H C_0  ~~~  \text{ and } ~~~ \beta_2 : C_0 \otimes \mathrm{ker} (s) \rightarrow H^{\otimes 2} \otimes_H \mathrm{ker}(s)
    \end{align*}
    by $\beta_2 (x , y) = [x * y]$ and $\beta_2 (x, u) = [1_x * u]$, for $x, y \in C_0$ and $u \in \mathrm{ker} (s)$. Another skew-symmetric $H^{\otimes 3}$-linear map $\beta_3 : C_0 \otimes C_0 \otimes C_0 \rightarrow H^{\otimes 3} \otimes_H \mathrm{ker}(s)$ we define by
    \begin{align*}
        \beta_3 (x, y, z) = pr_2 \big(  \mathcal{J}_{x, y, z} : [[x * y] * z] \rightarrow [ x * [y * z]] - [y * [x * z]]  \big), \text{ for } x, y, z \in C_0.
    \end{align*}
    Here $pr_2 : H^{\otimes 3} \otimes_H (C_0 \oplus C_1) \rightarrow H^{\otimes 3} \otimes_H C_1$ is the projection onto the second factor. Then similar to the work of Baez and Crans \cite{baez-crans}, it can be checked that the triple $\mathcal{L} = (  \mathrm{ker}(s) \xrightarrow{ t|_{\mathrm{ker}(s)} } C_0, \beta_2, \beta_3)$ is a $2$-term $L_\infty$ $H$-pseudoalgebra.

    Next, let $( C = (C_1 \rightrightarrows C_0), [\cdot * \cdot], \mathcal{J})$ and $(C' = (C'_1 \rightrightarrows C'_0), [\cdot * \cdot]', \mathcal{J}')$ be two Lie-$2$ $H$-pseudoalgebras and $(F_0, F_1, F_2)$ be a morphism between them. Consider the corresponding $2$-term $L_\infty$ $H$-pseudoalgebras $\mathcal{L} = (  \mathrm{ker}(s) \xrightarrow{ t|_{\mathrm{ker}(s)} } C_0, \beta_2, \beta_3)$ and $\mathcal{L}' = (  \mathrm{ker}(s') \xrightarrow{ t'|_{\mathrm{ker}(s')} } C'_0, \beta'_2, \beta'_3)$. We define two $H$-linear maps $f_0 : C_0 \rightarrow C_0'$ and $f_1 : \mathrm{ker}(s) \rightarrow \mathrm{ker}(s')$, and a $H^{\otimes 2}$-linear map $f_2 : C_0 \otimes C_0 \rightarrow H^{\otimes 2} \otimes_H \mathrm{ker} (s')$ by
    \begin{align*}
        f_0 (x) = F_0 (x), ~~~~ f_1 (u) = F_1 (u) ~~~ \text{ and } ~~~ f_2 (x, y) = (F_2)^{x, y} - 1_{s   ( (F_2)^{x, y} )} = pr_2 ( (F_2)^{x, y}),
    \end{align*}
    for $x, y \in C_0$ and $u \in \mathrm{ker} (s)$. Then a straightforward observation shows that $(f_0, f_1, f_2): \mathcal{L} \rightsquigarrow \mathcal{L}'$ is a morphism of $2$-term $L_\infty$ $H$-pseudoalgebras. As a consequence, we obtain a functor $\mathcal{S} : {\bf Lie2}H \rightarrow {\bf 2shLie}H$.

    In the following, we construct a functor $\mathcal{T} :  {\bf 2shLie}H \rightarrow {\bf Lie2}H$. We start with a $2$-term $L_\infty$ $H$-pseudoalgebra $(\mathcal{L}_1 \xrightarrow{\beta_1} \mathcal{L}_0, \beta_2, \beta_3)$. First, we consider the left-$2$ module $C= (\mathcal{L}_0 \oplus \mathcal{L}_1 \rightrightarrows \mathcal{L}_0)$ over $H$. We define a $H^{\otimes 2}$-linear skew-symmetric functor $[\cdot * \cdot] : C \otimes C \rightarrow H^{\otimes 2} \otimes_H C$ by
    \begin{align*}
        [(x, u) * (y, v)] := \big(  \beta_2 (x, y), \beta_2 (x, v) - (\sigma_{12} \otimes_H 1) \beta_2 (y, v) + \beta_2 ( \beta_1 (u), v)  \big),
    \end{align*}
    for $(x, u), (y, v) \in \mathcal{L}_0 \oplus \mathcal{L}_1$. We define the Jacobiator by
    \begin{align*}
        \mathcal{J}_{x, y, z}= \big(   \beta_2 (    \beta_2 (x, y), z ), ~ \beta_3 (x, y, z)  \big), \text{ for } x, y, z \in \mathcal{L}_0.
    \end{align*}
    It follows from condition (iii) of Definition \ref{defn2term} that the Jacobiator $\mathcal{J}_{x, y, z}$ has the source $[[x * y] * z]$ and the target $[x * [y * z]] - [y * [x * z]]$, for $x, y, z \in \mathcal{L}_0$. Note that the Jacobiator is also a natural isomorphism. Finally, a direct calculation (using the conditions of Definition \ref{defn2term}) yields that the Jacobiator makes the diagram (\ref{hexa-diag}) commutative. See \cite{baez-crans} for similar calculation. Hence we obtain a Lie-$2$ $H$-pseudoalgebra  $(\mathcal{L}_0 \oplus \mathcal{L}_1 \rightrightarrows \mathcal{L}_0, [\cdot * \cdot], \mathcal{J})$.

    Next, let $\mathcal{L} = (\mathcal{L}_1 \xrightarrow{ \beta_1} \mathcal{L}_0, \beta_2, \beta_3)$ and $\mathcal{L}' = (\mathcal{L}'_1 \xrightarrow{ \beta'_1} \mathcal{L}'_0, \beta'_2, \beta'_3)$ be $2$-term $L_\infty$ $H$-pseudoalgebras and $(f_0, f_1, f_2) : \mathcal{L} \rightsquigarrow \mathcal{L}'$ be a morphism between them. We consider the corresponding Lie-$2$ $H$-pseudoalgebras
    \begin{align*}
        C = ( \mathcal{L}_0 \oplus  \mathcal{L}_1 \rightrightarrows \mathcal{L}_0, [\cdot * \cdot], \mathcal{J}) ~~ \text{ and } ~~ C' = ( \mathcal{L}'_0 \oplus  \mathcal{L}'_1 \rightrightarrows \mathcal{L}'_0, [\cdot * \cdot]', \mathcal{J}'), \text{respectively}.
    \end{align*}
    We define $H$-linear maps $F_0 : \mathcal{L}_0 \rightarrow \mathcal{L}_0'$, $F_1 : \mathcal{L}_0 \oplus \mathcal{L}_1 \rightarrow \mathcal{L}_0' \oplus \mathcal{L}_1'$ and a natural isomorphism $(F_2)^{x, y} : [F_0 (x) * F_0 (y)]' \rightarrow (\mathrm{id}_{H^{\otimes 2}} \otimes_H F_0) [x * y]$ by
    \begin{align*}
        F_0 = f_0, ~~~~ F_1 = f_0 \oplus f_1 ~~~ \text{ and } ~~ (F_2)^{x, y} = \big(  [F_0 (x) * F_0 (y)]', f_2 (x, y) \big).
    \end{align*}
    Then $(F_0, F_1, F_2)$ turns out to be a morphism of Lie-$2$ $H$-pseudoalgebras from $C$ to $C'$. It follows that we get a functor $\mathcal{T} : {\bf 2shLie}H \rightarrow {\bf Lie2}H$.

    Finally, we will show that the functors $\mathcal{S}$ and $\mathcal{T}$ make the categories $ {\bf Lie2}H$ and ${\bf 2shLie}H$ equivalent. In other words, we construct natural isomorphisms $\Lambda : \mathcal{T} \circ \mathcal{S} \Rightarrow 1_{ {\bf Lie2}H}$ and $\Upsilon : \mathcal{S} \circ \mathcal{T} \Rightarrow 1_{ {\bf 2shLie}H }$. Let $(C = (C_1 \rightrightarrows C_0), [ \cdot * \cdot], \mathcal{J})$ be a Lie-$2$ $H$-pseudoalgebra. If we apply the functor $\mathcal{S}$, we get the $2$-term $L_\infty$ $H$-pseudoalgebra $(\mathrm{ker}(s) \xrightarrow{ t|_{ \mathrm{ker}(s)}  } C_0, \beta_2, \beta_3)$. Again, if we apply the functor $\mathcal{T}$, we get the Lie-$2$ $H$-pseudoalgebra
    \begin{align*}
        (C'= (C_0 \oplus \mathrm{ker}(s) \rightrightarrows C_0), [\cdot * \cdot]', \mathcal{J}').
    \end{align*}
    Then it is easy to see that the triple $(\Lambda_0 = \mathrm{id}_{C_0}, \Lambda_1, \Lambda_2)$, where $\Lambda_1 : C_0 \oplus \mathrm{ker} (s) \rightarrow C_1$, $(\Lambda_1) (x, u) = 1_x + u$ and $(\Lambda_2)^{x, y} = \mathrm{id} : [\Lambda_0 (x) * \Lambda_0 (y)] = [x * y] \rightarrow (\mathrm{id}_{H^{\otimes 2}} \otimes_H \Lambda_0) [x * y]' = [x * y]$ is an isomorphism of Lie-$2$ $H$-pseudoalgebras from $C'$ to $C$. This construction yields the natural isomorphism $\Lambda : \mathcal{T} \circ \mathcal{S} \Rightarrow 1_{ {\bf Lie2}H}$. On the other hand, a direct observation shows that the composition functor $\mathcal{S} \circ \mathcal{T}$ is the identity functor $1_{ {\bf 2shLie}H }$. Hence we can take $\Upsilon : \mathcal{S} \circ \mathcal{T} \Rightarrow 1_{ {\bf 2shLie}H }$ to be the identity natural isomorphism. This completes the proof.
\end{proof}

\noindent  {\bf Acknowledgements.} The author would like to thank the Department of Mathematics, IIT Kharagpur for providing the beautiful academic atmosphere where the research has been carried out.

\medskip

\noindent {\bf Data Availability Statement.} Data sharing does not apply to this article as no new data were created or analyzed in this study.

\end{document}